\documentclass[12pt,reqno]{amsproc}

\usepackage[margin=0.95in]{geometry}
\usepackage{bm}
\usepackage{amsmath, amssymb, amsthm}
\usepackage{mathpazo} \renewcommand{\mathbf}{\mathbold}

\setcounter{tocdepth}{2}

\renewcommand{\mathbf}{\mathbold}

\newcommand{\flord}{\blacktriangleright}
\usepackage{tikz}
\usetikzlibrary{matrix,arrows}

\usepackage[all,2cell]{xy}
\UseAllTwocells
\usepackage{pb-diagram}
\usepackage{pb-xy}


\usepackage{hyperref}
\hypersetup
{
    colorlinks,	%
    citecolor=red,%
    filecolor=black,%
    linkcolor=blue,%
    urlcolor=blue
}

\usepackage{color}


\theoremstyle{plain}
  \newtheorem{theorem}{Theorem}[section]
  
  \newtheorem{lemma}[theorem]{Lemma}
  
  \newtheorem{proposition}[theorem]{Proposition}

\theoremstyle{definition}
  \newtheorem{definition}[theorem]{Definition}
  
  \newtheorem{ex}[theorem]{Example}
  
  \newtheorem{remark}[theorem]{Remark}
  \newenvironment{example}{\begin{ex}}{\end{ex}}

  \newcommand{\set}[1]{\left\{#1\right\}}

  \newcommand{\R}{\mathbb{R}}


  \newcommand{\category}[1]{\mathrm{\mbox{\bf #1}}}
 
	\newcommand{\Face}{{\category{Fac}}}
	\newcommand{\Entr}{{\category{Ent}}}
	\newcommand{\Rmod}{\category{Mod}(R)}
	
	\newcommand{\bA}{\mathbf{A}}
	\newcommand{\bB}{\mathbf{B}}
	\newcommand{\bC}{\mathbf{C}}
	\newcommand{\bD}{\mathbf{D}}
	\newcommand{\bP}{\mathbf{P}}
	\newcommand{\bE}{\mathbf{E}}
	\newcommand{\bF}{\mathbf{F}}
	\newcommand{\bG}{\mathbf{G}}
	\newcommand{\bI}{\mathbf{I}}
	\newcommand{\bJ}{\mathbf{J}}
	\newcommand{\bL}{\mathbf{L}}
	
	\newcommand{\bN}{\mathbf{N}}
	\newcommand{\bQ}{\mathbf{Q}}

	\newcommand{\bU}{\mathbf{U}}
	\newcommand{\bV}{\mathbf{V}}



	\newcommand{\inv}{{{}^{-1}}}
	\newcommand{\X}{\mathbb{X}}
	\newcommand{\Sph}{\mathbb{S}}
	\newcommand{\bbP}{\mathbb{P}}
	
	\newcommand{\En}[1]{\Entr[#1]} 
	\newcommand{\Fc}[1]{\Face[#1]} 
	\newcommand{\Fl}[2]{\category{Flo}_{#1}[{#2}]} 
 \newcommand{\Loc}{\category{Loc}}
\newcommand{\Flo}{\category{Flo}}

  \newcommand{\twomor}{\Rightarrow}
	\newcommand{\twomorback}{\Leftarrow}
	
	\newcommand{\inc}{\hookrightarrow}

 \newcommand{\under}{\mathbin{\mkern-5mu/\mkern-8mu/}\mkern-5mu} 
	\newcommand{\fiber}{\under}

\newcommand{\com}{\hspace{-0.12em}\circ\hspace{-0.12em}}


\setlength{\parindent}{0.25in}

\title{{Discrete Morse Theory and Localization}}
\author{Vidit Nanda}
\date{}

\begin{document}

\begin{abstract}
Incidence relations among the cells of a regular CW complex produce a poset-enriched category of {\em entrance paths} whose classifying space is homotopy-equivalent to that complex. We show here that each acyclic partial matching (in the sense of discrete Morse theory) of the cells corresponds precisely to a homotopy-preserving localization of the associated entrance path category. Restricting attention further to the full localized subcategory spanned by critical cells, we obtain the {\em discrete flow category} whose classifying space is also shown to lie in the homotopy class of the original CW complex. This flow category forms a combinatorial and computable counterpart to the one described by Cohen, Jones and Segal in the context of smooth Morse theory. 
\end{abstract}

\maketitle

\section{Introduction}

To the reader who desires a quick summary of this work, we recommend a brief glance at Figure \ref{fig:deform}. Illustrated there is a regular CW complex $\X$ along with a simple operation which removes two cells $x$ and $y$, where $y$ is a face of $x$ (written $x > y$). 
\begin{figure}[ht]
\includegraphics[scale=0.25]{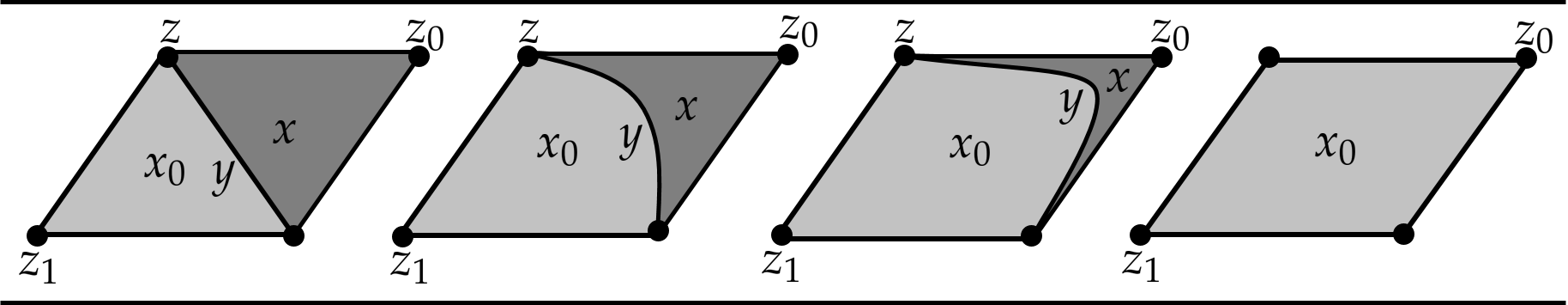}
\caption{Collapse of a cell pair $(x,y)$ in the CW complex $\X$.}
\label{fig:deform}
\end{figure}

Essentially, one drags $y$ across $x$ onto the other cells in the boundary of $x$. If $y$ happens to be a {\em free face} of $x$ (that is, if $x$ is the unique cell of $\X$ containing the closure of $y$ in its boundary), then our operation is an {\em elementary collapse} in the sense of simple homotopy theory \cite{whitehead}. In this special case, it is well-known that one can excise both $x$ and $y$ from $\X$ while preserving both homotopy type and regularity \cite{cohen:73,hatcher:75}. However, it is clear from our figure that if $y$ if {\em not} a free face of $x$, then we must concoct a mechanism to glue other co-faces of $y$ (such as $x_0$) to the remaining faces of $x$ in order to preserve homotopy type once $x$ and $y$ have been removed. Our focus here is on providing an explicit and computable method to perform such attachments.

We immediately sacrifice regularity when pursuing these non-elementary collapses, and therefore must pay careful attention to how the remaining cells are attached. For instance, consider Figure \ref{fig:deform} again and note that $\X$ remains regular even if the vertices $z_0$ and $z_1$ are identified. However, our final complex in this case is not regular precisely because the cell $x_0$ becomes attached to this identified vertex $z_0 \sim z_1$ in two essentially distinct ways\footnote{On the other hand, the attachment of $x_0$ to $z$ remains unaltered across the collapse.}. In order to keep track of such alterations in attaching maps when several collapses are performed, we turn to the {\em entrance paths} \cite{treumann} of $\X$. An entrance path from one cell to another is simply a descending sequence of intermediate faces connecting source to target  --- for instance, $(x > z)$ and $(x > y > z)$ are both entrance paths of $\X$ from $x$ to $z$. The {\bf entrance path category} of $\X$ is a poset-enriched category whose objects are the cells of $\X$, and whose morphisms are entrance paths partially ordered by inclusion, e.g., $(x > z) \twomor (x > y > z)$.  Finite regular CW complexes are homotopy equivalent to the classifying spaces of their entrance path categories\footnote{See Proposition \ref{prop:entpathhom}.}. 

We show here that collapsing the cell pair $x$ and $y$ in $\X$ as described above corresponds to the {\bf localization}, or formal inversion, of the morphism $(x > y)$ in the entrance path category of $\X$. The classifying space of the localized category so obtained is homotopy-equivalent to that of the un-localized one (and hence to $\X$). Moreover, one can safely remove both $x$ and $y$ from the localized category while preserving its homotopy type.

\subsection*{Motivation and related work}

The central purpose of our work is to construct a Morse theory tailored to a class of poset-enriched categories broad enough to contain entrance path categories of all regular CW complexes. Aside from the natural desire to simplify computation of cellular homotopy (and weaker algebraic-topological invariants) by eliminating superfluous cells as described above, we are also motivated by at least two largely disjoint streams of existing results in Morse theory \cite{milnormorse}.

Forman's {\em discrete Morse theory} \cite{forman98} has been successfully used to perform (co)homology computations not only in algebraic topology \cite{curry:ghrist:nanda, mischaikow:nanda, mori:salvetti:11}, but also in commutative algebra \cite{jollenbeck}, topological combinatorics \cite{shareshian}, algebraic combinatorics \cite{sagancpp} and even geometric group theory \cite{brown}. The central idea involves the imposition of a {\em partial matching} $\mu$ on adjacent cell pairs of a regular CW complex $\X$ subject to a global acyclicity condition --- the unmatched cells play the role of critical points whereas the matched cells generate combinatorial gradient-like flow paths. Although it is established that $\X$ is homotopy-equivalent to a CW complex whose cells correspond (in both number and dimension) to the critical cells of $\mu$, there is no explicit description of how these critical cells are attached to each other. A second goal of this paper is to better understand the attaching maps in discrete Morse theory.

On the other hand, the relationship between {\em Morse theory and classifying spaces} in the smooth category has been described by Cohen, Jones and Segal in \cite{cjs}. From a compact Riemannian manifold $\mathbf{X}$ equipped with a (smooth) Morse function $f:\mathbf{X} \to \R$, their work constructs a topologically enriched {\em flow category} $\bC_f$ whose
\begin{itemize} 
\item objects correspond to the critical points of $f$, 
\item morphisms are moduli space of broken gradient flow lines, and 
\item classifying space is homotopy-equivalent to $\mathbf{X}$. 
\end{itemize} Our third goal, then, is to {produce a combinatorial and computable analogue of the flow category from Cohen-Jones-Segal's Morse theory} by replacing Riemannian manifolds and smooth Morse functions by regular CW complexes and acyclic partial matchings. 

While it is tempting to attempt a direct translation of existing smooth arguments to the discrete setting, a fundamental obstacle drives the search for new techniques: in the smooth case, every flow line starting from an arbitrary non-critical point on the manifold terminates at a unique critical point of the Morse function. In sharp contrast, it is an unavoidable consequence of discretization that combinatorial gradient flow paths can split and merge rather viciously. Thus, even a non-critical cell typically admits several gradient paths to many different critical cells, and hence analogues of the smooth techniques are unavailable in this case.

\subsection*{Main results and outline}

Here is (the simplest version of) our main result.

\begin{theorem}
\label{thm:main}
Given a finite regular CW complex $\X$ equipped with an acyclic partial matching $\mu$, let $\Sigma = \{(x_\bullet > y_\bullet)\}$ denote the collection of all entrance paths which correspond to the $\mu$-pairings $\mu(y_\bullet)=x_\bullet$. Then, there exists a poset-enriched category $\Fl{\Sigma}{\X}$ whose objects are the critical cells of $\mu$, whose morphisms consist of the entrance paths of $\X$ localized about $\Sigma$, and whose classifying space is homotopy-equivalent to $\X$.
\end{theorem}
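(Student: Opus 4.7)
The plan is to factor the theorem into two homotopy equivalences: one arising from localization of the entrance path category, the other from restriction to the full subcategory on critical cells. I would begin by constructing the poset-enriched localization $\Loc_\Sigma \Entr[\X]$ by formally adjoining inverses to each morphism $(x_\bullet > y_\bullet) \in \Sigma$. Concretely, a morphism from $a$ to $b$ is an equivalence class of zigzags of entrance paths in which every backward arrow lies in $\Sigma$; the 2-cells of $\Entr[\X]$ induce the poset enrichment on these zigzags. The discrete flow category $\Fl{\Sigma}{\X}$ is then defined as the full subcategory of $\Loc_\Sigma \Entr[\X]$ spanned by the critical cells of $\mu$. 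With this setup, the theorem follows once we show that both the localization functor $L : \Entr[\X] \to \Loc_\Sigma \Entr[\X]$ and the inclusion $\Fl{\Sigma}{\X} \inc \Loc_\Sigma \Entr[\X]$ induce homotopy equivalences on classifying spaces, since Proposition \ref{prop:entpathhom} already identifies the classifying space of $\Entr[\X]$ with $\X$.

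For the first equivalence I would induct on the number of pairs in $\mu$, processing matched pairs $(x_\bullet,y_\bullet)$ one at a time in an order consistent with a linear extension afforded by acyclicity of the matching. At each step I would localize at a single morphism and invoke a poset-enriched analogue of Thomason's Theorem A, showing the relevant comma/fiber 2-categories have contractible nerves. The key combinatorial input is that, among all entrance paths emanating from $x_\bullet$, those passing through $y_\bullet$ form a coinitial (in fact, terminal-in-an-initial-segment) subset, so inverting the single morphism $(x_\bullet > y_\bullet)$ deformation-retracts a recognizable sub-nerve without changing the homotopy type. Acyclicity of $\mu$ ensures that subsequent localizations do not disturb the previous ones, so the induction closes.

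For the second equivalence I would again apply Theorem A: for each non-critical cell $z$ one must show that the comma category of critical cells over $z$ in $\Loc_\Sigma \Entr[\X]$ has contractible classifying space. Acyclicity of $\mu$ equips the non-critical cells with a well-founded reduction to critical ones (by repeatedly following either the matched partner or a chosen face), and the poset enrichment supplies the 2-cells needed to build a simplicial contraction of the comma category onto a preferred critical reduct.

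The main obstacle will be the first equivalence. Unlike the smooth case, where the gradient flow from any non-critical point converges to a unique critical point, discrete gradient paths split and merge, so one cannot contract the relevant fiber by following a canonical trajectory. The contractibility must instead be extracted from the poset structure of $\Entr[\X]$, using 2-cells of the form $(x > z) \twomor (x > y > z)$ to produce the simplicial homotopies. Verifying this carefully, while also respecting the universal property of localization in the poset-enriched (rather than strict 1-categorical) setting, is where the technical heart of the argument lies.
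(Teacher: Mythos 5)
Your overall architecture is exactly the paper's: factor through $\Loc_\Sigma\En{\X}$, use Proposition \ref{prop:entpathhom} to identify $|\Delta\En{\X}|$ with $\X$, and prove that both the localization functor and the inclusion of the critical-cell subcategory induce homotopy equivalences via a p-categorical Theorem A (Theorems \ref{thm:cellularloc} and \ref{thm:floloc} in the paper). However, both of your detailed plans have genuine gaps. For the localization step, you propose inverting the matched morphisms one at a time along a linear extension, but after the first localization the intermediate category is no longer an entrance path category (nor obviously a cellular category carrying a Morse system), so the combinatorial input you want to reuse at each stage --- and in particular your claim that entrance paths through $y_\bullet$ are ``coinitial'' among paths out of $x_\bullet$, which is already false as stated, since an entrance path $(x_\bullet > z)$ need not refine to one through $y_\bullet$ unless $y_\bullet > z$ --- is not available, and the assertion that ``subsequent localizations do not disturb the previous ones'' is precisely what needs proof. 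The paper instead localizes about all of $\Sigma$ at once and contracts each fiber $\bL_\Sigma\fiber z$ by mapping it to the poset $\Sigma_z$ of \emph{essential chains} of a zigzag (Propositions \ref{prop:esswelldef}--\ref{prop:indstep}), with the \textbf{lifting} and \textbf{switching} axioms doing the work your induction would have to do.

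The second step is where your plan is conceptually wrong rather than merely incomplete. You propose contracting the relevant comma category onto ``a preferred critical reduct,'' but no such preferred critical cell exists: as you yourself observe in your final paragraph, discrete gradient paths split and merge, so a non-critical cell typically flows to many critical cells, and the fiber $w\fiber\bJ$ has no homotopy-terminal object in general. (Note also that the correct fibers here are the \emph{under}-fibers $w\fiber\bJ$; the over-fibers can be empty.) The paper's Proposition \ref{prop:floind} instead covers $x\fiber\bJ$ by subcategories indexed by the objects of $\bE|f$ --- the entrance path category of the boundary of $x$ with $y$ deleted, a punctured sphere --- and applies a nerve theorem, so the contractibility is inherited from the contractibility of that punctured boundary sphere together with an induction over the acyclicity order. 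This is also where finiteness and regularity of $\X$ enter essentially (the paper's Example \ref{ex:pathology} shows the inclusion $\bJ$ fails to be an equivalence without such ``mildness'' hypotheses), and your proposal never identifies where these hypotheses are used.
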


We call $\Fl{\Sigma}{\X}$ the {\bf discrete flow category} associated to the acyclic matching $\mu$ in order to emphasize the analogy with the flow category of Cohen, Jones and Segal from \cite{cjs}. Our strategy is as follows: we examine functors from the entrance path category $\En{\X}$ and the flow category $\Fl{\Sigma}{\X}$ into the localization $\Loc_\Sigma\En{\X}$ of $\En{\X}$ about $\Sigma$:
\[
\xymatrixrowsep{0.5in}
\xymatrixcolsep{0.5in}
\xymatrix{
\En{\X}\ar@{->}[r]^-{\bL_\Sigma}  & \Loc_\Sigma\En{\X} & \Fl{\Sigma}{\X} \ar@{->}[l]_-{\bJ}
}
\]  
and show that both $\bL_\Sigma$ (the canonical localization functor) and $\bJ$ (the inclusion of a full subcategory) induce homotopy-equivalences of classifying spaces. The central tool in both cases is Quillen's {\bf Theorem A} \cite{quillen} adapted to the 2-categorical setting \cite{bcgeonerve}.

A version of the discrete flow category $\Fl{\Sigma}{\X}$ has been constructed in \cite{ntt}, where its classifying space was shown to be homotopy-equivalent to (a $\mu$-dependent regular subdivision of) $\X$ via a {\em collapsing functor}. There are several differences between our model of $\Fl{\Sigma}{\X}$ and the one in \cite{ntt}. Most significantly, the relationship of the flow category to the entrance path category remains unexplored in \cite{ntt}. Our definition here has the advantage of being in a position to easily produce new, general flavors of discrete Morse theory by accessing the universal property of localization. On the other hand, we are unable to obtain a direct map $\En{\X} \to \Fl{\Sigma}{\X}$ analogous to the collapsing functor.

A smooth Morse function on a compact manifold is said to be {\em Morse-Smale} \cite[Ch 4]{banyaga:hurtubise:2004} whenever all stable and unstable manifolds intersect transversely\footnote{In other words, if the unstable manifold $W^-$ of an index-$p$ critical point intersects the stable manifold $W^+$ of an index-$q$ critical point, then their intersection $W^- \cap W^+$ is a manifold of dimension $(p-q-1)$. See \cite[Thm 2.27]{nicolaescu:12} for details.}. In addition to being a generic property of smooth Morse functions, such transversality greatly simplifies several standard arguments; there have, therefore, been intricate efforts to similarly define and exploit transversality for piecewise-linear \cite{akin:72, armstrong:zeeman:67} and topological \cite[Ch III.1]{kirby:siebenmann:77} manifolds. The flow category provides a natural definition of transversality in the context of acyclic partial matchings on regular CW complexes. Namely, a pairing $\Sigma$ is Morse-Smale if the following condition holds across all pairs of critical cells $c$ and $c'$: either the poset $\Fl{\Sigma}{\X}(c,c')$ of morphisms from $c$ to $c'$ in the flow category is empty, or its classifying space is a manifold of dimension $(\dim c - \dim c' - 1)$. We intend to carefully explore cellular Morse-Smale transversality in detail elsewhere.

The rest of this paper is organized as follows. Section \ref{sec:2cats} contains background material regarding poset-enriched categories and discrete Morse theory. Section \ref{sec:entpath} describes the entrance path category associated to each regular CW complex and catalogs some of its relevant properties. In Section \ref{sec:lochom} we show that the localization functor $\bL_\Sigma$ induces a homotopy-equivalence of classifying spaces by appealing to the 2-categorical Theorem A. Section \ref{sec:flocat} introduces the discrete flow category and establishes that its inclusion $\bJ$ into the localized entrance path category also induces a homotopy-equivalence. In Section \ref{sec:calc} we explicitly compute the discrete flow category in three different cases. In Section \ref{sec:newdmt} we record the general result (Theorem \ref{thm:gen}) which implies Theorem \ref{thm:main} and describe how one may use it to construct discrete Morse theories in broader contexts. 

\section{Homotopy and localization for p-categories} \label{sec:2cats}

We will work exclusively with small categories enriched over posets, which are called {\bf p-categories} throughout this paper. For a general treatment of enriched categories, see \cite{kelly}.

A p-category $\bC$ consists of a set $\bC_0$ of objects, and between each pair $x$ and $y$ of such objects there is a (possibly empty) poset $\bC(x,y)$ of morphisms subject to standard axioms, as described below. We write $f:x \to y$ to indicate that $f$ is an element of $\bC(x,y)$ and $f \twomor g$ to indicate that $f$ is less than $g$ as an element of $\bC(x,y)$. Moreover, one requires
\begin{itemize} 
\item for each object $x \in \bC_0$ the existence of a distinguished {\em identity} $1_x$ in $\bC(x,x)$, and 
\item across each triple $x,y,z$ of objects the presence of a {\em composition} \[\circ_{xyz}: \bC(x,y) \times \bC(y,z) \to \bC(x,z)\] which is associative, respects identities, and preserves the partial orders induced by $\twomor$ on its domain and codomain.
\end{itemize} When all morphism-sets of $\bC$ are given the trivial partial order, one recovers ordinary 1-categories; and since every poset is a 1-category (with at most one morphism between any pair of objects), every p-category is automatically a 2-category (see \cite{kelly}). Thus, p-categories lie properly between 1-categories and 2-categories.

A {\bf p-functor} from $\bC$ to another p-category $\bD$, written $\bF:\bC \to \bD$, assigns an object $\bF x$ of $\bD_0$ to  each object $x$ of $\bC_0$; and each poset $\bC(x,y)$ is mapped monotonically to the corresponding $\bD(\bF x,\bF y)$. We require all p-functors in sight to preserve identities (this property is often called {\em normality} \cite{bcgeonerve}); and given a pair of composable morphisms $f$ and $g$ in $\bC$, we require $\bF(f \com g) \twomor \bF(f) \com \bF(g)$ to hold in $\bD$. Such functors are called {\em oplax} in most references, and if the relation above is always an equality then the functor in question is {\em strict}. Since every strict p-functor is automatically oplax, we will describe and use results for oplax functors even though most functors which appear in our main arguments are strict.

Composition of p-functors is defined in the usual manner. An oplax p-natural transformation $\eta:\bF \twomor \bG$ between (strict or oplax) p-functors $\bF,\bG:\bC \to \bD$ assigns to each object $x$ of $\bC$ a morphism $\eta_x:\bF x \to \bG x$ in $\bD$ so that the following order relation holds for each $f:x \to y$ in $\bC$: 
\[
\xymatrixcolsep{0.4in}
\xymatrixrowsep{0.4in}
\xymatrix{%
	\bF x \ar@{->}[d]_{\bF f} \ar@{->}[r]^{\eta_x} & \bG x \ar@{->}[d]^{\bG f} \\
	\bF y \ar@{->}[r]_{\eta_y} \ar@{=>}[ur]^{} & \bG y
	}
\]
If the order relation depicted above is an equality for every $f$, then $\eta$ is called strict.

Given a p-category $\bC$, the {\bf full subcategory} $\bC'$ spanned by a subset $\bC'_0 \subset \bC_0$ has 
\begin{itemize}
\item $\bC'_0$ as its set of objects, and
\item $\bC'(x,y) = \bC(x,y)$ as the poset of morphisms from $x$ to $y$,
\end{itemize}
with composition being inherited verbatim from $\bC$. There is an obvious (strict) inclusion p-functor $\bC' \inc \bC$ in this case.

\begin{definition}
\label{def:atom}
A morphism $f:x \to y$ in a p-category $\bC$ is called an {\bf atom} if
\begin{enumerate}
\item $f \twomor f'$ holds for any $f' \in \bC(x,y)$,
\item $x = y$ implies $f = 1_x$, and
\item solutions to $g \com h \twomor f$ for morphisms $g:x \to z$ and $h:z \to y$ only exist 
			\begin{itemize}
				\item when $z = x$, in which case $(g,h) = (1_x,f)$, or 
				\item when $z = y$, in which case $(g,h) = (f,1_y)$;
			\end{itemize}
\end{enumerate}
In other words, atoms are simultaneously {\em minimal} and {\em weakly indecomposable} morphisms.
\end{definition}

\subsection{Nerves, fibers and homotopy}
\label{ssec:nerfibhom}
Small 1-categories have a homotopy theory arising from the {\bf nerve} construction: to each category one canonically associates a simplicial set \cite{friedsset, maysset} whose vertices are the objects and whose $n$-simplices correspond to sequences of $(n+1)$ composable morphisms. The following notion (adapted from \cite{bcgeonerve}) provides one model for extending the classifying space construction to p-categories. Although there are (at least) ten reasonable models for nerves of bicategories (which subsume p-categories), the main result of \cite{carrasco:cegarra:garzon:10} confirms that all are equivalent up to homotopy. 

\begin{definition} 
\label{def:geonerve}
The (oplax) {\bf geometric nerve} $\Delta\bC$ of a p-category $\bC$ is that simplicial set whose vertices are the objects $\bC_0$, and whose $n$-simplices spanning objects $x_0, \ldots, x_n$ consist of morphisms $f_{ij}:x_i \to x_j$ satisfying $f_{ik} \twomor  f_{ij} \com f_{jk}$ for all $0 \leq i \leq j \leq k \leq n$, with the understanding that $f_{ii} = 1_{x_i}$ for all $i$.
\end{definition}

Each edge of $\Delta\bC$ across vertices $x,y \in \bC_0$ corresponds to a morphism $f:x \to y$. And there is a unique 2-simplex across edge $f:x \to y$, edge $g: y \to z$ and edge $h: x \to z$ if and only if the relation $h \twomor f \circ g$ holds in $\bC(x,z)$. A simplex of dimension exceeding two exists in $\Delta\bC$ if and only if all of its 2-dimensional faces are present. Thus, the geometric nerve is {\em co-skeletal} beyond dimension 2.

It is readily confirmed that any strict or oplax p-functor $\bF:\bC \to \bD$ prescribes a simplicial map $\Delta \bF: \Delta \bC \to \Delta \bD$ of geometric nerves, and it is standard to ask when this map induces a homotopy-equivalence of classifying spaces. The 1-categorical version of the following result is an immediate consequence of \cite[Prop 2.1]{segalcsss}, while the avatar presented below follows from a bicategorical generalization \cite[Lem 2.6]{cegarra}. Here $\bI_\bC:\bC \to \bC$ is the identity p-functor. 

\begin{theorem}
\label{thm:nattranshomeq}
Let $\bF:\bC \to \bD$ and $\bG:\bD \to \bC$ be oplax p-functors. If there exist oplax natural transformations $\eta: \bI_\bC  \twomor \bF\bG$ and $\nu: \bG\bF \twomor \bI_\bD$, then $\Delta\bF$ and $\Delta\bG$ are homotopy-inverses.
\end{theorem}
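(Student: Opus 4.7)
The plan is to reduce to the standard principle that natural transformations between functors induce simplicial homotopies on nerves, upgraded here to the oplax p-categorical setting. The key intermediate step is the following \emph{homotopy lemma}: any oplax natural transformation $\tau: \bF' \twomor \bG'$ between oplax p-functors $\bF', \bG': \bA \to \bB$ gives a simplicial map $H_\tau: \Delta\bA \times \Delta^1 \to \Delta\bB$ whose restrictions to the two ends of $\Delta^1$ recover $\Delta\bF'$ and $\Delta\bG'$. Granting this lemma, the theorem follows by applying it to $\eta$ and $\nu$: the former produces a simplicial homotopy from $\Delta\bI_\bC$ to $\Delta(\bF\bG) = \Delta\bG \circ \Delta\bF$, while the latter produces one from $\Delta(\bG\bF) = \Delta\bF \circ \Delta\bG$ to $\Delta\bI_\bD$, exhibiting $\Delta\bF$ and $\Delta\bG$ as homotopy inverses upon passing to geometric realizations.

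To construct $H_\tau$, I would describe its action on a typical $n$-simplex of $\Delta\bA \times \Delta^1$. Such a simplex consists of an $n$-simplex $\sigma = \{f_{ij}: a_i \to a_j\}$ of $\Delta\bA$ together with a weakly monotone map $\epsilon: \{0, \ldots, n\} \to \{0 < 1\}$; let $s \in \{0, \ldots, n+1\}$ be the threshold at which $\epsilon$ jumps from $0$ to $1$. Set the $i$-th vertex of $H_\tau(\sigma, \epsilon)$ to be $\bF' a_i$ when $i < s$ and $\bG' a_i$ otherwise, and define its edges $h_{ij}$ by
\[
h_{ij} \;=\; \begin{cases} \bF' f_{ij} & \text{if } j < s, \\ \bG' f_{ij} & \text{if } i \geq s, \\ \bF' f_{ij} \com \tau_{a_j} & \text{if } i < s \leq j. \end{cases}
\]
All higher-dimensional data is then determined by these edges in accordance with Definition \ref{def:geonerve}.

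The main technical content, and the likely source of friction, is verifying the oplax inequalities $h_{ik} \twomor h_{ij} \com h_{jk}$ that certify the above data as a genuine simplex of $\Delta\bB$. I would proceed by case analysis on where $i \leq j \leq k$ fall relative to $s$. When all three lie on the same side of $s$, the inequality reduces directly to oplax functoriality of $\bF'$ or $\bG'$ applied to $f_{ik} \twomor f_{ij} \com f_{jk}$; in each of the two mixed cases (when $s$ strictly separates some pair among $i, j, k$), one chains oplax functoriality of $\bF'$ with a single application of the oplax naturality relation $\bF' f \com \tau_{a'} \twomor \tau_a \com \bG' f$ to propagate $\tau_{a_\bullet}$ across the transition, obtaining the required inequality. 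Once these inequalities are secured, compatibility of $H_\tau$ with face and degeneracy maps of both factors is a direct check, and the endpoint restrictions visibly return $\Delta\bF'$ and $\Delta\bG'$. This completes the homotopy lemma and, via the two applications described in the first paragraph, establishes the theorem.
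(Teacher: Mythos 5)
The paper does not actually prove this statement; it is quoted from the literature (the 1-categorical case from Segal and the bicategorical version from Cegarra's Lemma 2.6), so there is no in-paper argument to compare against. Your proposal supplies exactly the standard prism-homotopy proof that underlies those citations, and it is correct: with the paper's diagrammatic composition convention and the oplax naturality relation $\bF' f \com \tau_{a'} \twomor \tau_a \com \bG' f$ (the same convention used implicitly in the proof of Lemma \ref{lem:qinit}), your three edge formulas satisfy all the triangle inequalities $h_{ik} \twomor h_{ij}\com h_{jk}$ --- the only nontrivial case, $i < s \leq j \leq k$, chains $\bF' f_{ik}\com\tau_{a_k} \twomor \bF' f_{ij}\com \bF' f_{jk}\com\tau_{a_k} \twomor \bF' f_{ij}\com\tau_{a_j}\com \bG' f_{jk}$ exactly as you indicate --- and coskeletality of the geometric nerve beyond dimension $2$ makes the edge-level data sufficient and the face/degeneracy compatibility automatic.
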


It is often useful to identify when certain p-categories have contractible classifying spaces (i.e., those possessing the homotopy type of a point) --- recall, for instance, that 1-categories with initial or terminal objects are contractible\footnote{An {\em initial object} in a 1-category is any object which admits precisely one morphism to every object in that category (including itself) while a {\em terminal object} is one which admits only one morphism from every object. In the special case where our category is a poset, initial and terminal objects correspond to minimal and maximal elements respectively.}. The following definition provides up-to-homotopy analogues of such objects in p-categories.

\begin{definition}
\label{def:homaxmin}
An object $z$ of a p-category $\bC$ is called {\bf homotopy-maximal} if each poset $\bC(w,z)$ for $w \in \bC_0$ has a maximal element $f_w$, with $f_z = 1_z$. Similarly, the object $w$ is {\bf homotopy-minimal} if for each object $z \in \bC_0$ the poset $\bC(w,z)$ has a minimal element $f_z$, with $f_w = 1_w$.
\end{definition} 
We will appeal to the following result with considerable frequency.
\begin{lemma}
\label{lem:qinit}
Any p-category $\bC$ containing a homotopy-maximal or homotopy-minimal object has a contractible classifying space.
\end{lemma}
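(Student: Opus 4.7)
My plan is to apply Theorem~\ref{thm:nattranshomeq} to the unique p-functor $\bG: \bC \to \ast$, where $\ast$ denotes the trivial p-category with one object and only its identity morphism. Since $\Delta\ast$ is a single point, establishing that $\Delta\bG$ is a homotopy equivalence will immediately give contractibility of $\Delta\bC$. The candidate inverse is the p-functor $\bF: \ast \to \bC$ sending the unique object to the homotopy-extremal object $z$. Here the composite $\bG\bF$ is literally the identity on $\ast$, so one of the two oplax natural transformations required by Theorem~\ref{thm:nattranshomeq} is free; the real work is to exhibit an oplax natural transformation between $\bI_\bC$ and the other composite $\bF\bG: \bC \to \bC$, which is the constant functor at $z$.

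In the homotopy-maximal case I would set $\eta_w = f_w$ for each $w \in \bC_0$, where $f_w$ is the maximal element of $\bC(w,z)$ supplied by Definition~\ref{def:homaxmin}. Each $\eta_w$ is then a morphism $\bI_\bC(w) \to \bF\bG(w) = z$ as required, and the stipulation $f_z = 1_z$ ensures that the component at $z$ is the identity. For an arbitrary $h:w \to w'$ in $\bC$, the oplax naturality inequality to verify takes the form
\[
f_{w'} \com h \twomor f_w
\]
in the poset $\bC(w,z)$, and this is immediate from the maximality of $f_w$.

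The homotopy-minimal case is structurally dual: the components $\nu_w = f_w: z \to w$ furnish an oplax natural transformation in the opposite direction, and the analogous inequality $f_{w'} \twomor h \com f_w$ in $\bC(z, w')$ follows from the minimality of $f_{w'}$. In either case Theorem~\ref{thm:nattranshomeq} concludes that $\Delta\bG$ is a homotopy equivalence, so $\Delta\bC$ is contractible.

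The only nontrivial piece of bookkeeping is matching the direction of the $\twomor$ relation prescribed by oplax naturality against the direction supplied by Definition~\ref{def:homaxmin}; in particular, whether the maximal or the minimal case furnishes $\eta:\bI_\bC \twomor \bF\bG$ (versus the opposite $\bF\bG \twomor \bI_\bC$) depends on the 2-cell convention used in the oplax naturality square. Once this convention is pinned down, both cases collapse into the same formal pattern, so I do not anticipate any substantive obstacle beyond this convention check.
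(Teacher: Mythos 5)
Your proof is correct and is essentially identical to the paper's: both pass through the trivial one-object p-category, take the constant functor at the extremal object as homotopy inverse, use the components $f_w$ as the oplax natural transformation, and derive the required 2-cell from maximality (resp.\ minimality). The only discrepancy is notational — the paper's $\com$ is diagrammatic order, so the inequality reads $h \com f_{w'} \twomor f_w$ rather than $f_{w'}\com h \twomor f_w$ — which is exactly the convention check you flagged.
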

\begin{proof}
Let $z$ be a homotopy-maximal object of $\bC$, so for each $w \in \bC_0$ there is a maximum element $f_w: w \to z$ in $\bC(w,z)$ (with $f_z = 1_z$). Let $\bullet$ be the trivial p-category consisting of the single object and only the identity morphism. Consider two p-functors which relate $\bC$ and $\bullet$ --- there is a p-functor $\bullet \hookrightarrow \bC$ given by sending the unique object of $\bullet$ to $z$, and there is an obvious surjection $\bC \twoheadrightarrow \bullet$. Composing the inclusion with the surjection in one direction immediately produces the identity p-functor on $\bullet$. Composing in the opposite direction yields a strict p-functor $\bC \to \bC$ which sends every object to $z$ and every morphism to $1_z$. We claim that this p-functor admits an oplax p-natural transformation from the identity functor on $\bC$: send each $w \in \bC_0$ to $f_w: w \to z$, and each morphism $g:w \to w'$ in $\bC$ to the order relation $g \com f_{w'} \twomor f_w$ which results from the fact that $f_w$ is maximal in $\bC(w,z)$. In short, we have the following diagram:
\[
\xymatrixcolsep{0.4in}
\xymatrixrowsep{0.4in}
\xymatrix{
w \ar@{->}[d]_{g} \ar@{->}[r]^{f_w} & z \ar@{->}[d]^{1_z}  \\
w' \ar@{->}[r]_{f_{w'}}  \ar@{=>}[ur]^{} & z 
}
\]
Theorem \ref{thm:nattranshomeq} now yields a homotopy-equivalence of $\bC$ with $\bullet$, and hence establishes that the classifying space $|\Delta\bC|$ is contractible. A very similar argument (involving hitherto-unmentioned {\em lax} p-functors and natural transformations) furnishes contractibility in the presence of a homotopy-minimal object.
\end{proof}

When seeking to establish that a p-functor $\bF:\bC \to \bD$ induces homotopy-equivalence  in the absence of a p-functor going back from $\bD$ to $\bC$, one typically resorts to a contractible fiber argument.

\begin{definition}
\label{def:fiber2cat}
Given an oplax p-functor $\bF:\bC \to \bD$ and an object $z \in \bD$, the {\bf fiber} of $\bF$ over $z$, denoted $\bF \fiber z$, is a p-category whose 
\begin{enumerate}
\item objects are pairs $(w,g)$ where $w \in \bC_0$ and $g \in \bD(\bF w,z)$, and
\item morphisms from $(w,g)$ to $(w',g')$ are given by $h \in \bC(w,w')$ satisfying $\bF h \com g' \twomor g$, 
\end{enumerate}
where the partial order on morphisms is contravariant to the one inherited from $\bC$ --- namely, $h \twomor h'$ in $\bF \fiber z$ if $h' \twomor h$ in $\bC$.
\end{definition}
The rules for composition of morphisms of fiber categories are inherited from $\bC$ and $\bD$ in a reasonably straightforward manner, so we refer the curious reader to \cite{bcgeonerve} for details and instead illustrate a morphism $h:(w,g) \to (w',g')$ in $\bF \fiber z$:
\[
\xymatrixcolsep{0.25in}
\xymatrixrowsep{0.35in}
\xymatrix{
\bF w \ar@{->}[rr]_*+<0.8em>{\stackrel{}{\Longleftarrow}}^{\bF h} \ar@{->}[dr]_-{g} & & \bF w' \ar@{->}[dl]^-{g'} \\
 & z & 
}
\]
The following result follows directly from \cite[Thm 2]{bcgeonerve} and generalizes Quillen's {\bf Theorem A} \cite{quillen} to p-categories. In particular, it provides sufficient conditions which guarantee that a p-functor induces homotopy-equivalence of classifying spaces.
\begin{theorem}[Theorem A for p-categories]
\label{thm:quillenA}
Let $\bF:\bC \to \bD$ be a (strict or oplax) p-functor. If the classifying space $|\Delta(\bF\fiber z)|$ of the fiber over each $z \in \bD_0$ is contractible, then $\Delta\bF: \Delta\bC \to \Delta\bD$ induces a homotopy-equivalence.
\end{theorem}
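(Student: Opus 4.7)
The plan is to adapt Quillen's original bisimplicial argument for 1-categorical Theorem A to the p-categorical setting, essentially by interpolating between $\Delta\bC$ and $\Delta\bD$ via a bisimplicial set that fibers over the target and has the fibers over $\bD$'s simplices modeled by the p-categorical fibers $\bF\fiber z$.

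First, I would construct a bisimplicial set $W_{\bullet,\bullet}$ whose $(p,q)$-simplices consist of a $p$-simplex $\sigma = (x_0 \to \cdots \to x_p)$ of $\Delta\bD$ together with a $q$-simplex of $\Delta(\bF\fiber x_p)$. Horizontal face maps would use the combinatorics of $\Delta\bD$ together with the contravariant ordering on fibers in Definition \ref{def:fiber2cat}, while vertical faces come directly from the simplicial structure of $\Delta(\bF\fiber x_p)$. The key technical check here is that the oplax condition $\bF(f\com g) \twomor \bF f \com \bF g$ and the relation $\bF h \com g' \twomor g$ defining morphisms in the fiber interact correctly with the $n$-simplex inequalities $f_{ik} \twomor f_{ij} \com f_{jk}$ from Definition \ref{def:geonerve}.

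Next I would realize $W$ in the two orders and compare. Realizing vertically first, the $p$-th row splits as a disjoint union $\bigsqcup_\sigma \Delta(\bF\fiber x_p)$ indexed by $p$-simplices $\sigma$ of $\Delta\bD$; the contractibility hypothesis collapses each summand to a point, so this row is homotopy-equivalent to the discrete set $(\Delta\bD)_p$. A standard Reedy/level-wise argument (or the bisimplicial Eilenberg--Zilber/diagonal lemma) then gives $|W| \simeq |\Delta\bD|$.

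Realizing $W$ horizontally first, one recognizes the resulting simplicial space as the Grothendieck-type assembly that recovers $|\Delta\bC|$: a $(p,q)$-simplex of $W$ records a $p$-chain in $\bD$ together with a compatible lift witnessing data in $\bC$ with 2-cells supplied by the fiber structure, and the oplax coherence of $\bF$ together with Theorem \ref{thm:nattranshomeq}-style oplax natural transformations let one build a degreewise equivalence with the constant simplicial space whose value is $\Delta\bC$. Pasting the two equivalences shows that $|\Delta\bC| \simeq |W| \simeq |\Delta\bD|$, and unwinding the construction shows that the comparison map is homotopic to $|\Delta\bF|$.

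The main obstacle I expect is step three, where the oplax data must be re-packaged faithfully into the horizontal structure of $W$: unlike the strict 1-categorical Grothendieck construction, one has genuine 2-cell inequalities $\twomor$ to track, and one must verify that the contravariance convention on fibers (Definition \ref{def:fiber2cat}) is precisely what makes the gluing compatible with the oplax associativity of composition in $\bF$. Once the bookkeeping is set up correctly, the contractibility of each $\bF\fiber z$ does the real work, exactly as in the classical proof of Theorem A.
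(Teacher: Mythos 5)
The paper does not actually prove this statement: Theorem \ref{thm:quillenA} is imported wholesale from \cite[Thm 2]{bcgeonerve}, so the only honest comparison is with the argument in that reference, which is indeed a bisimplicial/homotopy-colimit adaptation of Quillen's proof. Your overall strategy is therefore the right one, but as written the proposal has two gaps, one small and one substantive. The small one is a variance error: by Definition \ref{def:fiber2cat} the assignment $z \mapsto \bF\fiber z$ is \emph{covariant} (a morphism $u: z \to z'$ sends $(w,g)$ to $(w, g\com u)$, and $\bF h \com g' \twomor g$ implies $\bF h \com g' \com u \twomor g\com u$ because composition is order-preserving), so the fiber must be attached to the \emph{first} vertex $x_0$ of the chain $(x_0 \to \cdots \to x_p)$, not the last: only then does the face map deleting $x_0$ have somewhere to push the fiber data, while the remaining faces leave it alone. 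With your indexing the horizontal face maps of $W_{\bullet,\bullet}$ are not defined.

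The substantive gap is your third step. In Quillen's 1-categorical argument the horizontal realization is identified with $|N\bC|$ because the relevant fibers are nerves of comma categories with initial objects -- a two-line check. In the p-categorical setting the object you build by realizing $W$ horizontally is a Grothendieck-type (homotopy colimit) construction, and identifying its homotopy type with that of the \emph{geometric} nerve $\Delta\bC$ of Definition \ref{def:geonerve} is not bookkeeping: the geometric nerve is a single coskeletal simplicial set defined by the coherence inequalities $f_{ik} \twomor f_{ij}\com f_{jk}$, and it does not arise levelwise from any bisimplicial double nerve. Proving that it nevertheless has the correct homotopy type relative to the bisimplicial models (the pseudo-simplicial nerve, the Segal-type nerves, etc.) is precisely the content of \cite{bcgeonerve} and \cite{carrasco:cegarra:garzon:10}. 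So your plan, at the step you yourself flag as the obstacle, silently invokes the comparison theorems that constitute the actual proof of the statement. To make the argument self-contained you would need either to prove such a comparison (e.g.\ a homotopy colimit theorem for oplax diagrams of p-categories together with the equivalence of the geometric nerve with the bisimplicial nerve), or to accept the citation as the paper does.
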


\subsection{Localization}
\label{subsec:loc}

Given a (small, ordinary) category  $\bC$ and a collection $\Sigma$ of its morphisms that contains all identies and is closed under composition, one can define the {\bf localization of $\bC$ about $\Sigma$} --- this is the minimal category $\bC[\Sigma\inv]$ containing $\bC$ where members of $\Sigma$ have been formally inverted \cite{dwyerkan2, dwyerkan, gabrielzisman}. The localization comes with a functor $\bL: \bC \to \bC[\Sigma\inv]$ characterized by the following universal property: every morphism in $\Sigma$ is sent to an isomorphism by $\bL$, and every other functor $\bF:\bC \to \bD$ which sends $\Sigma$-elements to isomorphisms in $\bD$ admits a Kan extension $\bF'$ across $\bL$. In other words, the triangle below commutes:
\[
\xymatrixrowsep{0.3in}
\xymatrixcolsep{0.2in}
\xymatrix{
\bC \ar@{->}[rr]^{\bF} \ar@{->}[dr]_-{\bL} & & \bD\\
& \bC[\Sigma\inv] \ar@{-->}[ur]_-{\bF'} & 
}
\]

Here we describe localization for p-categories about a special class of morphisms. Call a collection $\Sigma$ of morphisms in a p-category {\em directed} if it only contains atoms, and if for each $f:x \to y$ in $\Sigma$, we have both $x \neq y$ and the absence of morphisms from $y$ to $x$ in $\Sigma$.
\begin{definition}
\label{def:loc2cat}
Let $\bC$ be a p-category and $\Sigma$ a directed collection of its morphisms so that the union $\Sigma^+$ of $\Sigma$ with all identities is closed under composition. The {\bf localization of $\bC$ about $\Sigma$}, written $\Loc_\Sigma\bC$, is a p-category given by the following data.
\begin{enumerate}
\item The set $\Loc_\Sigma\bC_0$ of objects is precisely the same as $\bC_0$.
\item Given objects $w,z \in \bC_0$, every morphism $\gamma : w \to z$ in $\Loc_\Sigma\bC$ is an equivalence classes of finite but arbitrarily long {\em $\Sigma$-zigzags} in $\bC$ of the form
\[
\xymatrixrowsep{0.45in}
\xymatrixcolsep{0.35in}
\xymatrix{
w \ar@{->}[r]^{g_0} & y_0 & x_0 \ar@{->}[l]_{f_0} \ar@{->}[r]^{g_1} & y_1 & \cdots \ar@{->}[l]_{f_1} &  x_k \ar@{->}[l]_{f_k} \ar@{->}[r]^{g_{k+1}} & z
}
\]
where the left-pointing $f_\bullet$ are to be chosen from $\Sigma^+$, the right-pointing $g_\bullet$ are arbitrary, and the equivalence is generated by the following relations. Two zigzags are related
\begin{itemize}
\item {\bf horizontally} if they differ by intermediate identity maps, or 
\item {\bf vertically} if they form the rows of a commuting diagram in $\bC$:
\[
\xymatrixcolsep{0.5in}
\xymatrixrowsep{0.45in}
\xymatrix{
w \ar@{->}[r]^{g_0} \ar@{=}[d]_{} &  y_0  \ar@{->}[d]^{u_0} & x_0 \ar@{->}[l]_{f_0} \ar@{->}[r]^{g_1} \ar@{->}[d]^{v_0} & \cdots & x_{k} \ar@{->}[r]^{g_{k+1}} \ar@{->}[l]_{f_k} \ar@{->}[d]_{v_{k}} & z \ar@{=}[d] \\
w \ar@{->}[r]_{g'_0} \ar@{=}[ur]^{}	&  y'_0 	& x'_1 \ar@{->}[l]^{f'_0}  	\ar@{->}[r]_{g'_1} \ar@{=}[ul]_{} \ar@{=}[ur]_{} & \cdots & x'_{k} \ar@{->}[l]^{f'_k} \ar@{->}[r]_{g'_{k+1}} \ar@{=}[ul]^{} \ar@{=}[ur]_{}& z
}
\]
where all vertical morphisms $u_\bullet$ and $v_\bullet$ also lie in $\Sigma^+$.
\end{itemize}

\item The partial order on morphisms in $\Loc_\Sigma\bC$ is obtained by replacing all equalities by order relations in the commuting diagram of (2):
\[
\xymatrixcolsep{0.5in}
\xymatrixrowsep{0.45in}
\xymatrix{
w \ar@{->}[r]^{g_0} \ar@{=>}[d]_{} &  y_0  \ar@{->}[d]^{u_0} & x_0 \ar@{->}[l]_{f_0} \ar@{->}[r]^{g_1} \ar@{->}[d]^{v_0} & \cdots & x_{k} \ar@{->}[r]^{g_{k+1}} \ar@{->}[l]_{f_k} \ar@{->}[d]_{v_{k}} & z \ar@{=}[d] \\
w \ar@{->}[r]_{g'_0} \ar@{=>}[ur]^{}	&  y'_0 	& x'_1 \ar@{->}[l]^{f'_0}  	\ar@{->}[r]_{g'_1} \ar@{=>}[ul]_{} \ar@{=>}[ur]_{} & \cdots & x'_{k} \ar@{->}[l]^{f'_k} \ar@{->}[r]_{g'_{k+1}} \ar@{=>}[ul]^{} \ar@{=>}[ur]_{}& z
}
\]
Thus, $\gamma' \twomor \gamma$ in $\Loc_\Sigma\bC(w,z)$ if and only if there exist $\Sigma$-zigzags representing $\gamma$ and $\gamma'$ which fit into the top and bottom rows (respectively) of a  diagram like the one above. Again, the vertical morphisms $u_\bullet$ and $v_\bullet$ must lie in $\Sigma^+$.

\item Composition of morphisms in $\Loc_\Sigma\bC$ is given by concatenating representative $\Sigma$-zigzags, with the understanding that the last right-pointing map of the first morphism is to be composed in $\bC$ with the first right-pointing map of the second morphism.    
\end{enumerate}
\end{definition}

One can check (using the directedness of $\Sigma$) that $\twomor$ is indeed a partial order, and that the composition $\circ$ is well-defined and order-preserving. While the construction above may be slightly more involved than ordinary 1-categorical localization (from \cite[Ch 1]{gabrielzisman}), it is less intricate than the {\em hammock localization} of 1-categories from \cite{dwyerkan2} and the localization of enriched categories in general (as in \cite{wolff:73}). In any event, the reader can find three examples of localization in Section \ref{sec:calc}.
\begin{remark}
\label{rmk:reductions}
If a right-pointing map in some $\Sigma$-zigzag factors as $g\circ f$ where $f:x \to y$ lies in $\Sigma$, and if it is followed by a left-pointing $f$, then one can simplify to a shorter $\Sigma$-zigzag which represents the same morphism in $\Loc_\Sigma\bC$ as follows: 
\[
\left(\cdots \stackrel{g\circ f}{\longrightarrow} y \stackrel{f}{\gets} x \stackrel{h}{\to} \cdots\right) \sim 
\left(\cdots \stackrel{g}{\to} x \stackrel{1}{\gets} x \stackrel{h}{\to} \cdots\right) \sim
\left(\cdots \stackrel{g\circ h}{\longrightarrow} \cdots\right)
\]
Here the first equivalence follows from {\em vertical} reduction and the second from {\em horizontal} reduction as described in the preceding definition. In this sense, a left-pointing $f \in \Sigma$ cancels the preceding right-pointing $f$.
\end{remark}

The associated localization p-functor $\bL_\Sigma:\bC \to \Loc_\Sigma\bC$ is strict, and essentially given by inclusion --- each object is mapped identically to itself, and each morphism is sent to its own equivalence class of zigzags. Any other (strict) p-functor $\bC \to \bD$ which sends the morphisms in $\Sigma$ to isomorphisms in $\bD$ admits a Kan extension across $\bL_\Sigma$.

\subsection{Discrete Morse theory} \label{sec:DMT}

Discrete Morse theory is a combinatorial adaptation of Morse theory \cite{forman98, knudson}. The underlying engine which powers the main results is the notion of simple homotopy equivalence \cite{cohen:73}. Let $\X$ be a finite regular CW complex. We write $y \lhd x$ to indicate that the cell $y$ is a co-dimension 1 face of the cell $x$ (that is, $\dim x - \dim y = 1$). 

\begin{definition}
\label{def:partmatch}
An {\bf acyclic partial matching} on $\X$ consists of a partition of the cells into three disjoint sets $D, U$ and $M$ along with a bijection $\mu:D \to U$ so that the following conditions hold.
\begin{enumerate}
\item {\bf Incidence:} $d \lhd \mu(d)$ for each $d \in D$, and
\item {\bf Acyclicity:} the transitive closure of the binary relation 
\[
d \prec_\mu d' \text{ if and only if } d \lhd \mu(d')
\]
is a partial order on $D$.
\end{enumerate}
The unpaired cells which lie in $M$ are called {\bf critical} cells of $\mu$ in analogy with smooth Morse theory. 
\end{definition}

\begin{figure}[h!]
\includegraphics[scale=0.2]{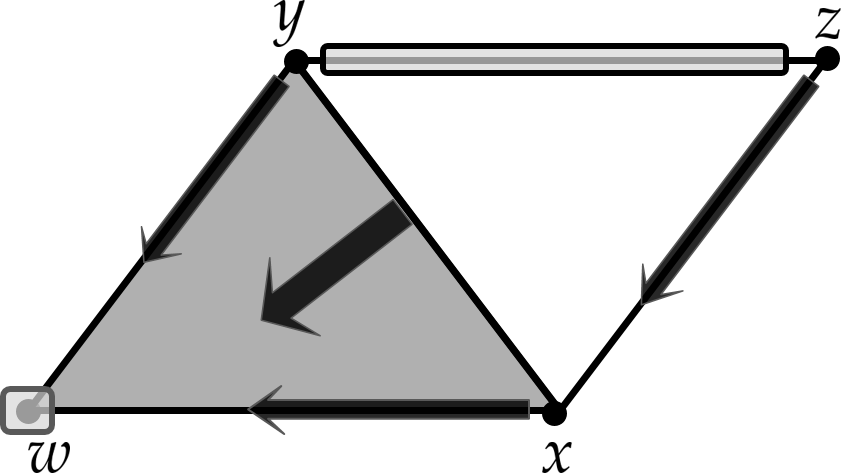}
\caption{An acyclic partial matching $\mu$ on a small simplicial complex. Matched pairs are shown via arrows $d \to \mu(d)$, while critical cells are boxed.}
\label{fig:match}
\end{figure}

Figure \ref{fig:match} illustrates a simple acyclic partial matching $\mu$ on a finite simplicial complex which lies in the homotopy class of a circle. One may extend $\prec_\mu$ to a partial order on $D \cup M$ as follows: each $m \in M$ is strictly smaller than every $m' \in M$ or $d \in D$ in its co-boundary and strictly larger than every $m'$ in its boundary. Moreover, we declare $m \prec_\mu d$ for $d \in D$ whenever $m$ is a face of $\mu(d)$.  Recall that a {\em linear extension} of $\prec_\mu$ is a well-ordering of the cells in $M \cup D$ which is monotone with respect to $\prec_\mu$. Given any such extension $\{e_1,\ldots,e_J\}$ of $\prec_\mu$ consisting of cells from $M \cup D$, let $\X^j$ for $j \leq J$ be the {\em reduced} CW subcomplex of $\X$ defined as follows. It contains all the cells of $\X$ {except} the union of $e_i$ (and $\mu$-paired cells in $U$, if any) across all $i > j$. The following results of Forman \cite[Thm 3.3 and 3.4]{forman98} mimic the traditional smooth Morse lemmas  \cite[Thm 3.1 and 3.2]{milnormorse}. 

\begin{lemma}
\label{thm:dmlemma}
Let $\mu:D \to U$ be an acyclic partial matching on $\X$ with critical cells $M$. Let $\{e_1, e_2, \cdots,e_J\}$ be any ordering of $M \cup D$ which forms a linear extension of $\prec_\mu$, and let $\X^j$ be the corresponding reduced CW complexes.
\begin{enumerate}
\item[\bf A.] If $\{e_i, e_{i+1}, \ldots,e_j\}$ contains no critical cells, then $\X^j$ simple-homotopy collapses onto $\X^i$ via the removal of pairs $(\mu(e_k),e_k)$ in descending order for $k$ between $i$ and $j$.
\item[\bf B.] If $\{e_i, e_{i+1}, \ldots, e_j\}$ contains exactly one critical cell (of dimension $n$), then $\X^j$ is homotopy-equivalent to $\X^i$ with a single $n$-dimensional disk attached along its boundary.
\end{enumerate}
\end{lemma}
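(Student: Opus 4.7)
The plan is to argue both parts by descending induction along the linear extension $\{e_1, \ldots, e_J\}$, reducing the local step to a single elementary collapse in part A and to a single cell attachment in part B.

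For part A, I would prove by downward induction on $k \in \{i+1, \ldots, j\}$ that $\X^k$ simple-homotopy collapses onto $\X^{k-1}$ via excision of the pair $(\mu(e_k), e_k)$, so that composing these collapses produces $\X^j \searrow \X^i$. The essential verification is that $(\mu(e_k), e_k)$ is a free pair of $\X^k$: the relation $e_k \lhd \mu(e_k)$ is immediate from incidence, while I must separately check that $\mu(e_k)$ has no proper coface in $\X^k$ and that $e_k$ has $\mu(e_k)$ as its unique codimension-one coface in $\X^k$. Supposing for contradiction that some $e' \in \X^k \setminus \{e_k, \mu(e_k)\}$ violates one of the latter two conditions, I would perform a case analysis on whether $e' \in M$, $U$, or $D$. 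In each case, combining the acyclicity relation $d \prec_\mu d' \iff d \lhd \mu(d')$ on $D$ with its extension by the co-boundary clauses to $D \cup M$ (and, in the codimension-two subcases, the diamond property of regular CW complexes to reduce back to a single-step face relation) would produce a $\prec_\mu$-comparison forcing the index of $e'$ to exceed $k$ --- which, by monotonicity of the linear extension, contradicts the presence of $e'$ in $\X^k$.

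For part B, let $l \in \{i, \ldots, j\}$ denote the unique index with $e_l = m$ critical of dimension $n$, and split the range at $m$ into the two cell-pair-only sub-ranges $\{e_{l+1}, \ldots, e_j\}$ and $\{e_i, \ldots, e_{l-1}\}$. Applying part A to each sub-range would yield $\X^j \searrow \X^l$ above $m$ and $\X^{l-1} \searrow \X^i$ below. By construction $\X^l$ is obtained from $\X^{l-1}$ by attaching the single $n$-disk $\bar m$ along the restriction $\varphi : \partial \bar m \to \X^{l-1}$ of the characteristic map of $m$ in $\X$. Composing $\varphi$ with a homotopy inverse of the simple-homotopy equivalence $\X^{l-1} \simeq \X^i$ and invoking the homotopy-invariance of pushouts along the cofibration $\partial \bar m \hookrightarrow \bar m$ then produces $\X^j \simeq \X^l \simeq \X^i \cup_{\varphi'} \bar m$, as required.

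The principal obstacle I anticipate lies in the bookkeeping for the free-pair verification of part A, especially the subcase where the obstructing cell $e'$ belongs to $U$ and satisfies $\mu(e_k) \lhd e'$: here $e_k$ appears only as a codimension-two face of $e'$, so one must traverse the regular-CW diamond over the interval $[e_k, e']$ to locate an intermediate codimension-one face $b \ne \mu(e_k)$ through which the single-step acyclicity relation on $D$ can be chained. Once this subcase is dispatched, the remaining cases of part A and the homotopy-equivalence argument for part B reduce to standard simple-homotopy and CW-topological manipulations.
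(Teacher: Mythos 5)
First, a point of orientation: the paper never proves this lemma --- it is imported wholesale as Forman's Theorems 3.3 and 3.4 and used as a black box, so there is no internal argument to compare yours against. Your strategy (part A by exhibiting each removal $(\mu(e_k),e_k)$ as an elementary collapse of a free pair, part B by splitting the interval at the unique critical cell, applying part A on either side, and transporting the attaching map through the resulting simple-homotopy equivalence via cofibration-invariance of pushouts) is the standard argument in the discrete Morse theory literature, and the diamond-property reduction you describe for the codimension-two subcase is the right move.

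However, the pivotal step of part A --- that any surviving coface $e'$ of $e_k$ in $\X^k$ yields a comparison $e_k \prec_\mu e'$ and hence an index exceeding $k$ --- does not go through with $\prec_\mu$ as defined here. The generating relation on $D$ only compares $d$ with $d'$ when $d \lhd \mu(d')$, and the stated extension to $D \cup M$ never places a cell of $D$ below a coface lying in $D$ or in $M$. Concretely, take the full triangle on $\{a,b,c\}$ with $\mu(a)=ab$ and $\mu(ac)=abc$, so that $M=\{b,c,bc\}$; one checks that $a$ and $ac$ are $\prec_\mu$-incomparable, so the ordering $(b,c,bc,ac,a)$ is a legal linear extension. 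But then the first pair to be removed is $(ab,a)$, and $a$ is not a free face of $ab$ in the full triangle (it also bounds $ac$ and $abc$); indeed $\X^4$ is not even a subcomplex. Your case analysis therefore stalls exactly in the subcases where $e'\in D$ or $e'\in M$ with $e_k\lhd e'$: no $\prec_\mu$-comparison is available to force the index of $e'$ above $k$. The repair is to run the induction over a linear extension of the finer order generated by the matching-modified Hasse diagram on \emph{all} cells --- declare $\sigma \prec \tau$ whenever $\sigma$ is a proper face of $\tau$ and $\tau \neq \mu(\sigma)$, which is a partial order precisely because $\mu$ is acyclic. Under that order every $\X^j$ is a subcomplex, every coface of $e_k$ other than $\mu(e_k)$ has index exceeding $k$, and your argument closes up; note that part B also tacitly needs this, since the attaching map of the critical cell $e_l$ only lands in $\X^{l-1}$ if all faces of $e_l$ precede it. This finer order is in effect what Forman's sublevel-set argument supplies.
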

As mentioned in the Introduction, it is important to note that there is no explicit description of how the critical cells are attached along their boundaries --- the proof of Lemma {\bf B} follows from an inductive argument which relies on the finiteness of $\X$, and in particular it is straightforward to construct simple examples where the attaching maps are not regular. Even so, the following main theorem of discrete Morse theory is an immediate consequence of Lemmas {\bf A} and {\bf B}.
\begin{theorem}
\label{thm:dmhomotopy}
Let $\mu:D \to U$ be an acyclic partial matching on $\X$ with critical cells $M$. Then, $\X$ is homotopy-equivalent to a CW complex whose $n$-dimensional cells correspond bijectively with the $n$-dimensional cells in $M$.
\end{theorem}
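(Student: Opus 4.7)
The plan is to build the target CW complex $Y$ incrementally by attaching cells, using Lemmas \textbf{A} and \textbf{B} from Lemma \ref{thm:dmlemma} to match the homotopy type of $Y$ at each stage with that of an appropriate reduced complex $\X^j$. First I would fix a linear extension $\{e_1, e_2, \ldots, e_J\}$ of $\prec_\mu$ and extract the increasing indices $i_1 < i_2 < \cdots < i_k$ at which critical cells appear, so $M = \{e_{i_1}, \ldots, e_{i_k}\}$. Adopting the convention $i_0 = 0$ and $\X^0 = \emptyset$, the key observation is that for each $s \in \{1, \ldots, k\}$, every cell in the block $\{e_{i_{s-1}+1}, \ldots, e_{i_s - 1}\}$ is non-critical, and $e_{i_s}$ is the single critical cell in the block $\{e_{i_{s-1}+1}, \ldots, e_{i_s}\}$.

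Next I would define a tower of CW complexes $Y^0 \subset Y^1 \subset \cdots \subset Y^k =: Y$, with $Y^0 = \emptyset$ and $Y^s$ obtained from $Y^{s-1}$ by attaching a single $n_s$-disk along some map $\partial D^{n_s} \to Y^{s-1}$, where $n_s = \dim e_{i_s}$. The attaching maps will be chosen along with homotopy-equivalences $\varphi_s : Y^s \stackrel{\simeq}{\to} \X^{i_s}$ by induction on $s$. The base case $s = 0$ is vacuous. For the inductive step, Lemma \textbf{A} applied to the indices $i_{s-1}$ and $i_s - 1$ produces a simple-homotopy collapse $\X^{i_s - 1} \searrow \X^{i_{s-1}}$, which combined with the inductive equivalence $\varphi_{s-1}$ yields $Y^{s-1} \simeq \X^{i_s - 1}$. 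Then Lemma \textbf{B} applied to the block ending at $i_s$ identifies $\X^{i_s}$ with the pushout of $\X^{i_s - 1}$ and $D^{n_s}$ along some map $\partial D^{n_s} \to \X^{i_s - 1}$. Pulling this map back along the homotopy equivalence $\X^{i_s - 1} \simeq Y^{s-1}$ specifies the attaching map for the $s$-th cell of $Y$, and the standard fact that a pushout of a cofibration along a homotopy-equivalent pair of spaces remains homotopy-equivalent produces the desired $\varphi_s$.

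Iterating this construction to $s = k$ gives $Y \simeq \X^{i_k}$; one then applies Lemma \textbf{A} once more (if $i_k < J$) to collapse $\X^J$ down to $\X^{i_k}$, yielding $Y \simeq \X$. By construction, $Y$ has exactly one $n$-cell for each $n$-dimensional critical cell in $M$, which is the claim. The main obstacle to watch for is the fact that Lemma \textbf{B} only delivers a homotopy-equivalence rather than an isomorphism of pushout diagrams, so the attaching maps of $Y$ are only well-defined up to homotopy; however, this is exactly the level of specification a general CW structure requires. A secondary care point is that the existence of a linear extension of $\prec_\mu$ must be invoked (which is routine since $\prec_\mu$ is a partial order on the finite set $M \cup D$ by acyclicity), and one must verify that extending $\prec_\mu$ from $D$ to $M \cup D$ as described in the text still produces a partial order.
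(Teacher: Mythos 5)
Your proposal is correct and is precisely the argument the paper has in mind: the text presents Theorem \ref{thm:dmhomotopy} as ``an immediate consequence of Lemmas {\bf A} and {\bf B},'' and your induction over the blocks of a linear extension of $\prec_\mu$ --- collapsing via Lemma {\bf A} between critical cells and attaching a disk via Lemma {\bf B} at each one, transporting attaching maps along the accumulated homotopy equivalences --- is exactly how that consequence is made explicit. No substantive difference from the paper's (implicit) proof.
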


Turning once again to Figure \ref{fig:match}, we note that one possible linearization of $\prec_\mu$ is given by the sequence $(w,x,y,xy,z,yz)$. In the interval between $w$ and $z$, the simple homotopy type coincides with that of a point; in fact, a sequence of collapses to $w$ is given by following the arrows. Attaching the critical cell $yz$ via its boundary changes the homotopy type to that of a circle (and hence recovers the homotopy class of our original simplicial complex) --- the attaching map is not regular, since both ends of the edge $yz$ are mapped to the vertex $w$ via the paths
\[
yz \rhd y \lhd wy \rhd w \quad \text{ and } \quad yz \rhd z \lhd xz \rhd x \lhd wx \rhd w.
\]
Unlike in the case of smooth Morse theory, there is no unique path from a non-critical cell to a critical cell. In particular, there are two distinct gradient paths from $xy$ to $w$.
%

\section{Cellular categories and Morse systems} \label{sec:entpath}

Given cells $x$ and $y$ of a finite regular CW complex $\X$, we write $x > y$ to denote the strict face relation indicating that the closure of $y$ is contained in the boundary of $x$. Recall that the {\em face poset} $\Fc{\X}$ consists of cells as objects, identity morphisms, and a unique morphism $x \to y$ whenever $x > y$. The nerve of this face poset is the first barycentric subdivision of $\X$, so in particular $\X$ is homotopy-equivalent \cite[Thm 1.7]{lw} (and indeed, homeomorphic) to $|\Delta \Fc{\X}|$.

\subsection{Entrance path categories}

The following p-category \cite{treumann} is a thicker version of $\Fc{\X}$ --- the partial order on its morphisms is designed to catalog {\em how} one cell lies in the boundary of another.

\begin{definition}
\label{def:ent2cat}
The {\bf entrance path category} $\En{\X}$ of (a regular CW complex) $\X$ is the p-category given by the following data.
\begin{enumerate}
\item The objects $\En{\X}_0$ are the cells of $\X$.

\item For each $x,y$ in $\En{\X}_0$, the set $\En{\X}(x,y)$ of morphisms has as its objects the {\em entrance paths}, which are strictly descending sequences $f = (x = x_0 > \cdots > x_k = y)$ of cells from $x$ to $y$.

\item Given $f$ and $f'$ in $\En{\X}(x,y)$, we have the order relation $f \twomor f'$ in $\En{\X}(x,y)$ if and only $f$ is a (not necessarily contiguous) subsequence of $f'$.

\item If $g \in \En{\X}(y,z)$ is the entrance path $(y = y_0 > \cdots > y_\ell = z)$, then its composite with $f:x \to y$ from (2) is given by concatenation, i.e.,
\[
f \com g = (x = x_0 > \cdots > x_k = y = y_0 > \cdots > y_\ell = z) \in \En{\X}(x,z). 
\]
It is easy to check that this composition is order-preserving and that the unique element $(x)$ of $\En{\X}(x,x)$ functions as the identity.
\end{enumerate}
\end{definition}

\begin{figure}[ht]
\includegraphics[scale=0.2]{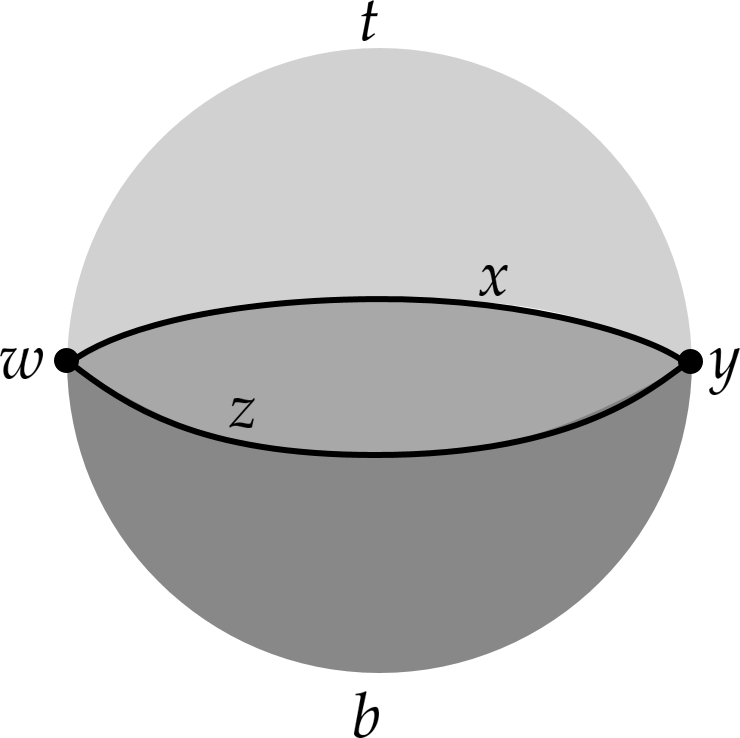}
\caption{A minimal regular CW decomposition of the 2-sphere.}
\label{fig:cwsphere}
\end{figure}

\begin{example}
Let $\Sph$ be the minimal regular CW decomposition of the 2-sphere as shown in Figure \ref{fig:cwsphere}. The elements of $\En{\Sph}_0$ are the cells $\set{w,x,y,z,t,b}$. The poset of morphisms $\En{\Sph}(t,x)$ consists only of the sequence $(t > x)$, whereas $\En{\Sph}(t,w)$ is the poset
\[
(t>x>w) \twomorback (t>w) \twomor (t>z>w).
\]
The composite of $(t > x)$ in $\En{\Sph}(t,x)_0$ and $(x > w)$ in $\En{\Sph}(x,w)_0$ equals $(t > x > w)$, which is an object of $\En{\Sph}(t,w)$.
\end{example}

\begin{proposition}
\label{prop:entpathhom}
If $\X$ is a finite regular CW complex, then there is a homotopy-equivalence 
\[
\X \sim |\Delta \En{\X}|,
\]
between $\X$ and the classifying space of its entrance path category.
\end{proposition}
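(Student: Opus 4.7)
The plan is to construct a strict p-functor $\bF: \En{\X} \to \Fc{\X}$ which collapses each entrance path $(x_0 > x_1 > \cdots > x_k)$ to the unique morphism $x_0 \to x_k$ in the face poset, where $\Fc{\X}$ is viewed as a p-category with the trivial poset enrichment on each of its singleton morphism sets. Since $|\Delta \Fc{\X}|$ is the first barycentric subdivision of $\X$, it already coincides with $\X$ up to homotopy (indeed, up to homeomorphism), so the problem reduces to showing that $\Delta \bF$ induces a homotopy-equivalence of classifying spaces. The strategy is to verify this through the 2-categorical version of Quillen's Theorem A (Theorem \ref{thm:quillenA}) by establishing contractibility of each fiber $\bF \fiber z$ over a cell $z$.

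For a fixed cell $z$, Definition \ref{def:fiber2cat} specifies $\bF \fiber z$: its objects are pairs $(w, g)$ where $w$ is a cell satisfying $w \geq z$ and $g \in \Fc{\X}(w, z)$ is the unique morphism between them. Since $\Fc{\X}$ has singleton (possibly empty) morphism sets, the compatibility inequality $\bF h \com g' \twomor g$ appearing in the morphism condition becomes automatic, so the morphisms $(w, g) \to (w', g')$ in $\bF \fiber z$ are precisely the entrance paths in $\En{\X}(w, w')$, equipped however with the reverse of the subsequence order. I would then argue that the object $(z, 1_z)$ is homotopy-maximal in this fiber: for any $(w, g)$ with $w > z$, the direct entrance path $(w > z)$ is the unique minimum of $\En{\X}(w, z)$ under subsequence inclusion, since every entrance path from $w$ to $z$ begins at $w$, ends at $z$, and therefore contains $(w > z)$ as a subsequence; once we flip the order to pass into the fiber, this minimum becomes the required maximum. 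The normalization condition $f_{(z, 1_z)} = 1_{(z, 1_z)}$ is satisfied because, when $w = z$, the corresponding minimum is the trivial entrance path $(z)$, which is the identity of the fiber at $(z, 1_z)$.

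Having exhibited a homotopy-maximal object, Lemma \ref{lem:qinit} forces $|\Delta(\bF \fiber z)|$ to be contractible for each $z$, and Theorem \ref{thm:quillenA} then upgrades $\Delta \bF$ to a homotopy-equivalence $|\Delta \En{\X}| \simeq |\Delta \Fc{\X}|$. The stated homotopy-equivalence with $\X$ follows by composing with the barycentric subdivision identification. I expect the only delicate point to be managing the contravariant ordering on fiber morphisms (where a min in $\En{\X}(w, z)$ is the needed max in the fiber), and convincing oneself that the compatibility condition $\bF h \com g' \twomor g$ imposes no restriction here precisely because $\Fc{\X}$ is a genuine 1-category viewed as a p-category.
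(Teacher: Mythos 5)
Your argument is correct, but it takes a genuinely different route from the paper. The paper also reduces to the face poset via the collapse functor $\bP:\En{\X}\to\Fc{\X}$ (which it pairs with the inclusion $\Fc{\X}\inc\En{\X}$), but it concludes by exhibiting an explicit oplax natural transformation from $\bP$ followed by the inclusion to the identity on $\En{\X}$ and invoking Theorem \ref{thm:nattranshomeq}; this produces an actual homotopy inverse at the level of functors. You instead run Theorem \ref{thm:quillenA} on $\bP$ and check its fibers, and your fiber analysis is sound: since $\Fc{\X}$ has singleton hom-sets with trivial order, the compatibility condition of Definition \ref{def:fiber2cat} is vacuous, the fiber over $z$ is the full subcategory of cells $w\geq z$ with hom-posets $\En{\X}(w,w')$ reversed, and $(z,1_z)$ is homotopy-maximal because the two-term path $(w>z)$ is a subsequence of every entrance path from $w$ to $z$ and hence the minimum of $\En{\X}(w,z)$, becoming the required maximum after the order reversal. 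Your approach costs slightly more machinery (Theorem A rather than the more elementary natural-transformation lemma) and yields only the equivalence rather than an explicit inverse functor, but it has the virtue of rehearsing exactly the fiber-contractibility technique the paper relies on for the harder Theorems \ref{thm:cellularloc} and \ref{thm:floloc}. One cosmetic remark: the collapse functor is strict as you say (the target's hom-posets are trivial, so laxity and strictness coincide), even though the paper labels it lax; the functor that is genuinely only oplax is the inclusion $\Fc{\X}\inc\En{\X}$, which your argument never needs.
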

\begin{proof}
The result is a standard application of Theorem \ref{thm:nattranshomeq} --- we show that $\En{\X}$ is homotopy-equivalent to the face poset $\Fc{\X}$ and hence to the first barycentric subdivision of $\X$. Note that $\Fc{\X}$ includes into $\En{\X}$ since the object-sets coincide and the face relation $x > y$ , whenever it holds, constitutes the minimal entrance path in the poset $\En{\X}(x,y)$. Consider the lax p-functor $\bP:\En{\X} \to \Fc{\X}$ which fixes all the objects and sends each entrance path $(x > \cdots > y)$ to this minimal path $(x > y)$. On one hand, including $\Fc{\X}$ into $\En{\X}$ and then mapping back to $\Fc{\X}$ via $\bP$ yields the identity p-functor on $\Fc{\X}$. On the other hand, for each pair of cells $x$ and $y$ in $\X$ with $x > y$ and any entrance path $(x > \cdots > y)$, we have:
\[
\xymatrixcolsep{0.4in}
\xymatrixrowsep{0.4in}
\xymatrix{%
	x \ar@{->}[d]_{(x > y)} \ar@{->}[r]^{(x)} & x \ar@{->}[d]^{(x > \cdots > y)} \\
	y \ar@{->}[r]_{(y)} \ar@{=>}[ur]^{} & y
	}
\]
Thus, $\bP$ followed by the inclusion of $\Fc{\X}$ into $\En{\X}$ admits an oplax natural transformation to the identity p-functor on $\En{\X}$ and so by Theorem \ref{thm:nattranshomeq} there is a homotopy-equivalence of classifying spaces $|\Delta \En{\X}| \sim |\Delta\Fc{\X}|$ as desired.
\end{proof}

\subsection{Morse systems on cellular categories}

Our goal in this section is to generalize entrance path categories and axiomatize the desirable properties of acyclic partial matchings on regular CW complexes. The following definition highlights a class of p-categories which is simultaneously broad enough to include all entrance path categories and narrow enough to admit a discrete Morse theory.

\begin{definition}
\label{def:cell2cat}
A p-category $\bE$ is called {\bf cellular} if for each $x, y \in \bE_0$, the poset $\bE(x,y)$ is either empty or it contains an atom (in the sense of Definition \ref{def:atom}).
\end{definition}
 It is straightforward to check that the entrance path category of any regular CW complex is cellular, since the minimal entrance path between any pair of adjacent cells $x \geq y$ is the atom of $\En{\X}(x,y)$.

Next, we turn to the analogue of Morse functions in our setting. By Definition \ref{def:partmatch}, every acyclic partial matching $\mu$ on a regular CW complex $\X$ corresponds to a selection of morphisms $f_\bullet \in \En{\X}(x_\bullet,y_\bullet)$ which happen to be atoms of their respective posets, whose sources and targets are all distinct, and which satisfy the global acyclicity assumption imposed by the partial order arising from $\prec_\mu$. 
\begin{definition}
\label{def:morsys}
A {\bf Morse system} $\Sigma$ on a cellular category $\bE$ is a collection 
\[
\{f_\bullet:x_\bullet \to y_\bullet\}
\] of morphisms subject to the following axioms.
\begin{enumerate}
\item {\bf Exhaustion:} if $f:x \to y$ lies in $\Sigma$, then it is the atom of $\bE(x,y)$ where $x \neq y$; no other morphism in $\Sigma$ has, as either source or target, any object $w \in \bE_0$ satisfying $\bE(x,w) \neq \varnothing \neq \bE(w,y)$. (In particular, neither $x$ nor $y$ is admissible as the source or target of any morphism in $\Sigma$ different from $f$).
\item {\bf Order:} the transitive closure of the binary relation $\flord$ defined on $\Sigma$ by
\[
(f_0: x_0 \to y_0) \flord (f_1:x_1 \to y_1) \text{ whenever } \bE(x_0,y_1) \text{ is nonempty }
\]
defines a partial order on $\Sigma$.
\item {\bf Lifting:} given $f_0 \neq f_1$ in $\Sigma$, each square of the form below splits via a horizontal morphism:
\[
\xymatrixcolsep{0.35in}
\xymatrixrowsep{0.25in}
\xymatrix{%
	& x_0 \ar@{->}[dr]^{g} \ar@{->}[dl]_{f_0}  & & &  & x_0 \ar@{->}[dr]^{g} \ar@{->}[dl]_{f_0} &   \\
	 y_0 \ar@{->}[dr]_{g'}  & \implies & x_1 \ar@{->}[dl]^{f_1} & \text{implies} & 	y_0 \ar@{->}[dr]_{g'} \ar@{->}[rr]_*+<0.8em>{\Longrightarrow}^*+<0.8em>{\Longrightarrow} &  & x_1 \ar@{->}[dl]^{f_1}   \\
 & y_1 & & &   & y_1 & 
	}
\]
\item {\bf Switching:} given $f_0 \neq f_1$ in $\Sigma$, if $\bE(x_0,x_1)$ and $\bE(y_0,y_1)$ are both nonempty and if their respective atoms $h$ and $\ell$ form the sides of the square below:
\[
\xymatrixcolsep{0.25in}
\xymatrixrowsep{0.25in}
\xymatrix{%
	& x_0 \ar@{->}[dr]^{h} \ar@{->}[dl]_{f_0}  \ar@{-->}[dd] &    \\
	 y_0 \ar@{->}[dr]_{\ell} & \twomor \hspace{2em} \twomorback &x_1 \ar@{->}[dl]^{f_1}     \\
 & y_1 & 
	}
\]
(for some dashed vertical morphism $v: x_0 \to y_1$), then $\bE(y_0,x_1)$ is nonempty, and its atom $q$ satisfies $f_0\com q\com f_1 \twomor v$.
\end{enumerate}
\end{definition}
The novelty here lies almost entirely in the {\bf lifting} and {\bf switching} axioms, and the former may be more explicitly stated as follows: given distinct $f_j:x_j \to y_j$ in $\Sigma$, if there exist $g: x_0 \to x_1$ and $g':y_0 \to y_1$ in $\bE$ satisfying $f_0\com g' \twomor g \com f_1$, then there also exists some $p: y_0 \to x_1$ simultaneously satisfying $f_0 \com p \twomor g$ and $g' \twomor p\com f_1$. 

\begin{proposition}
\label{prop:entcell}
Let $\X$ be a regular CW complex equipped with an acyclic partial matching $\mu$, and let $\Sigma = \{(x_\bullet > y_\bullet)\}$ denote the collection of entrance paths of $\X$ which correspond to the matchings $\mu(y_\bullet) = x_\bullet$. Then, $\Sigma$ is a Morse system on the entrance path category $\En{\X}$.
\end{proposition}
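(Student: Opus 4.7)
The plan is to verify the four axioms of a Morse system --- Exhaustion, Order, Lifting, and Switching --- in turn for the collection $\Sigma = \{(x_\bullet > y_\bullet)\}$. Throughout I will rely on the basic observation that the entrance path $(x > y)$ is automatically the atom of $\En{\X}(x, y)$, being the shortest possible entrance path connecting these adjacent cells.

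For \textbf{Exhaustion}, any object $w$ with $\En{\X}(x_\bullet, w) \neq \varnothing$ and $\En{\X}(w, y_\bullet) \neq \varnothing$ satisfies $x_\bullet \geq w \geq y_\bullet$, and since $\dim x_\bullet = \dim y_\bullet + 1$ this forces $w \in \{x_\bullet, y_\bullet\}$; neither cell can participate in a different matching since $\mu: D \to U$ is a bijection and $U, D, M$ are pairwise disjoint. For \textbf{Order}, the condition $f_0 \flord f_1$ unwinds to $x_0 > y_1$ in $\X$ (strict because $U \cap D = \varnothing$), yielding the dimension bound $\dim y_0 \geq \dim y_1$. In any hypothetical $\flord$-cycle $f_{i_0} \flord \cdots \flord f_{i_n} = f_{i_0}$, the dimensions must therefore remain constant, which forces every face relation $x_{i_j} > y_{i_{j+1}}$ to be codimension one, i.e., $y_{i_{j+1}} \prec_\mu y_{i_j}$. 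Forman's acyclicity of $\prec_\mu$ then collapses the entire chain to a single morphism, which is precisely antisymmetry.

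For \textbf{Lifting}, the hypothesis $f_0 \com g' \twomor g \com f_1$ asserts that the entrance path $(x_0, y_0, \ldots, y_1)$ is a subsequence of $(x_0, \ldots, x_1, y_1)$, so $y_0$ must occur as some intermediate cell $z_j$ of $g = (x_0 > z_1 > \cdots > z_m = x_1)$. Setting $p$ to be the tail $(y_0 > z_{j+1} > \cdots > x_1)$ then yields both triangle inequalities by direct subsequence bookkeeping. For \textbf{Switching}, any common refinement $v$ of $f_0 \com \ell$ and $h \com f_1$ is a strictly descending chain containing both $y_0$ and $x_1$, and their relative order in $v$ is pinned down by Exhaustion: if $x_1$ preceded $y_0$, i.e., $x_1 > y_0$, then combined with $h: x_0 > x_1$ this would contradict the exhaustion of $f_0$ applied to $w = x_1$, since $f_1$ would constitute a second matching with source $x_1$. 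Hence $y_0$ precedes $x_1$ in $v$, the atom $q = (y_0 > x_1)$ exists, and $f_0 \com q \com f_1 = (x_0, y_0, x_1, y_1)$ sits inside $v$ as a subsequence, delivering the required relation.

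I expect the \textbf{Order} axiom to be the main obstacle: $\flord$ a priori permits arbitrary face relations whereas Forman's acyclicity is formulated purely in terms of codimension-one faces, so the dimension-counting step that forces every arrow in a hypothetical cycle down to codimension one is the critical reduction that unlocks the acyclicity hypothesis. The remaining axioms are essentially subsequence calculus, with Exhaustion supplying the one extra combinatorial constraint needed to orient the two intermediate cells inside the common refinement $v$ of Switching.
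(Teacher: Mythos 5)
Your proof is correct and, for the \textbf{Lifting} and \textbf{Switching} axioms (the only two the paper verifies explicitly), it follows essentially the same route: locate $y_0$ inside $g$ and truncate, and pin down the relative order of $y_0$ and $x_1$ in the common refinement $v$ (you invoke \textbf{Exhaustion} where the paper invokes acyclicity of $\flord$, but these amount to the same disjointness/acyclicity facts). You also correctly fill in the \textbf{Exhaustion} and \textbf{Order} verifications that the paper leaves as exercises, with the dimension-counting reduction to codimension-one relations being exactly the right way to import Forman's acyclicity of $\prec_\mu$.
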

\begin{proof}
We only verify the {\bf lifting} and {\bf switching} axioms here, leaving the remaining verifications as simple exercises. In both arguments, we assume that $\mu(y_j) = x_j$ for $j \in \{0,1\}$ with $y_1$ and $x_1$ being proper faces of $y_0$ and $x_0$ respectively. Let $g = (x_0 > \cdots > x_1)$ and $g' = (y_0 > \cdots > y_1)$ be arbitrary entrance paths which satisfy 
\[
(x_0 > y_0) \com g' \twomor g \com (x_1 > y_1).
\]
By Definition \ref{def:ent2cat}, the path $g$ must contain $y_0$ as an intermediate cell. Thus, we have 
\[
g = (x_0 > \cdots > y_0 > \cdots > x_1).
\] The desired lift $p = (y_0 > \cdots > x_1)$ may now be obtained by discarding all the cells in $g$ which precede $y_0$. Turning now to the {\bf switching} axiom, note that any entrance path $v = (x_0 > \cdots > y_1)$ which satisfies
\[
(x_0 > y_0 > y_1) \twomor v \twomorback (x_0 > x_1 > y_1)
\]
must contain both $y_0$ and $x_1$ as intermediate cells. Furthermore, $y_0$ must precede $x_1$ in $v$ --- otherwise, writing $f_j:x_j \to y_j$ for the morphisms corresponding to the distinct entrance paths $(x_j > y_j)$, we violate the the acyclicity of $\mu$ since $f_0 \flord f_1 \flord f_0$. Therefore, 
\[
v = (x_0 > \cdots > y_0 > \cdots > x_1 > \cdots > y_1).
\] In particular, this forces $x_1$ to be a face of $y_0$ and the minimal entrance path $q = (y_0 > x_1)$ clearly satisfies
\[
f_0 \com q \com f_1 = (x_0 > y_0 > x_1 > y_1) \twomor v,
\]
as desired.
\end{proof}

\section{Localizations about Morse systems} \label{sec:lochom}

Every Morse system on a cellular category is directed in the sense of Definition \ref{def:loc2cat}, so we may localize about it. We will devote this section to proving the following result, which establishes one half of our main Theorem \ref{thm:main}.

\begin{theorem}
\label{thm:cellularloc}
If $\bE$ is a cellular category equipped with a Morse system $\Sigma$, then the (strict) localization p-functor $\bL_\Sigma:\bE \to \Loc_\Sigma\bE$ induces a homotopy-equivalence of classifying spaces.
\end{theorem}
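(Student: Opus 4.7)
My plan is to invoke the 2-categorical Theorem A (Theorem \ref{thm:quillenA}) applied to the localization p-functor $\bL_\Sigma$. Since $\bL_\Sigma$ acts as the identity on objects, it suffices to show that the fiber p-category $\bL_\Sigma \fiber z$ has a contractible classifying space for every object $z \in \bE_0$. Unpacking Definition \ref{def:fiber2cat}, an object of this fiber is a pair $(w, g)$ in which $g$ is a $\Sigma$-zigzag from $w$ to $z$, while a morphism $(w, g) \to (w', g')$ is an element $h \in \bE(w, w')$ satisfying $\bL_\Sigma h \com g' \twomor g$ in $\Loc_\Sigma\bE(w, z)$, with the order on such $h$ opposite to that inherited from $\bE$.

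To establish fiber contractibility I would combine Lemma \ref{lem:qinit} with a normal-form analysis of $\Sigma$-zigzags. For each $(w, g)$ in the fiber the Morse system axioms should allow me to identify a distinguished ``upstream'' object $(w_g, h_g)$: here $w_g$ is obtained by tracking the $\Sigma$-morphisms appearing in $g$ back toward their sources, and $h_g \in \bE(w_g, z)$ is the direct morphism produced by iterated application of the cancellation pattern of Remark \ref{rmk:reductions}. Cellularity of $\bE$ then supplies the atom of $\bE(w, w_g)$, which serves as the canonical fiber morphism from $(w, g)$ to $(w_g, h_g)$. Each of the four Morse system axioms pulls its own weight in this construction: exhaustion separates the $\Sigma$-morphisms so that the choice of $w_g$ is unambiguous; the order axiom guarantees termination of the backward traversal; lifting permits $\Sigma$-morphisms to be shuffled past right-pointing legs of the zigzag so that each application of Remark \ref{rmk:reductions} becomes legal; and switching furnishes the coherence cell $f_0 \com q \com f_1 \twomor v$ whenever two $\Sigma$-morphisms share a target or source.

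The hard part will be verifying that the assignment $(w, g) \mapsto (w_g, h_g)$ descends to zigzag equivalence classes and assembles into a p-functor from $\bL_\Sigma\fiber z$ to itself equipped with an oplax natural transformation from the identity. Once this coherence data is in hand, Theorem \ref{thm:nattranshomeq} produces a homotopy equivalence between $\bL_\Sigma\fiber z$ and its ``upstream'' subcategory, which I expect to admit a homotopy-minimal object and hence to have a contractible classifying space by Lemma \ref{lem:qinit}. Theorem \ref{thm:quillenA} then closes the argument.
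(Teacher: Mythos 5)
Your opening moves match the paper exactly: apply Theorem \ref{thm:quillenA} to $\bL_\Sigma$ and reduce to contractibility of each fiber $\bL_\Sigma\fiber z$. But the contractibility argument is where all the work lives, and the mechanism you sketch for it does not go through. First, the ``cancellation to a direct morphism'' step fails: Remark \ref{rmk:reductions} only cancels a left-pointing $f$ when the preceding right-pointing leg happens to factor through $f$. In general a zigzag contains \emph{essential} occurrences of $\Sigma$-morphisms that cannot be eliminated, and such a zigzag represents a morphism of $\Loc_\Sigma\bE(w,z)$ with no representative of the form $h_g\in\bE(w_g,z)$ at all. Second, even granting a candidate $w_g$, the fiber morphism you want from $(w,g)$ to $(w_g,h_g)$ requires a morphism in $\bE(w,w_g)$, and the poset $\bE(w,w_g)$ is typically empty when $w_g$ is ``upstream'' of $w$ (in the CW setting, the source $x_\bullet=\mu(y_\bullet)$ of a matched pair is generally not a face of a coface $w$ of $y_\bullet$; e.g.\ in Figure \ref{fig:match} take the zigzag $xy\to y\gets wy\to w$ and note $\En{\X}(xy,wy)=\varnothing$). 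The paper explicitly warns that $\bL_\Sigma\fiber z$ admits no homotopy-maximal or homotopy-minimal object in general, so no single retraction of the kind you describe can exist.

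What the paper does instead is stratify the fiber. It introduces the poset $\Sigma_z$ of descending $\Sigma$-chains terminating at $z$, assigns to each zigzag its \emph{essential chain} (the subsequence of $f_\bullet$'s that are neither left- nor right-redundant), proves this is invariant under zigzag equivalence (Proposition \ref{prop:esswelldef}) and functorial (Proposition \ref{prop:nfunc}), and then applies Theorem \ref{thm:quillenA} a second time to the resulting functor $\bN:\bL_\Sigma\fiber z\to\Sigma_z$. Your idea works, in essence, only for the bottom stratum: Proposition \ref{prop:base} shows that when every $f_\bullet$ is redundant, the lifting and switching axioms do assemble a direct morphism in $\bE(w,z)$ below $\gamma$, giving a homotopy-maximal object. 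For nontrivial strata the argument is genuinely harder: Proposition \ref{prop:indstep} decomposes $\bN\fiber\sigma$ as a union $\bA\cup\bB$ of contractible pieces and proves contractibility of $\bA\cap\bB$ via a nerve-theorem argument over a cover by $2(k+1)$ subcategories $\bU_j^\pm$. None of this stratification, well-definedness, or nerve-theorem machinery appears in your proposal, and you flag the coherence verification as ``the hard part'' without supplying it; as written, the proof has a genuine gap at its core.
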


Throughout this section, we adopt the notation above: $\bE$ will be a cellular category, and $\Sigma$ will be a Morse system defined on $\bE$. The underlying binary relation on $\Sigma$ which generates a partial order via the {\bf order} axiom of Definition \ref{def:morsys} is written $\flord$ as usual. We will always write constituent morphisms of $\Sigma$ as $f_\bullet:x_\bullet \to y_\bullet$ where the indexing $\bullet$'s are allowed to vary as needed. Let $\bL_\Sigma:\bE \to \Loc_\Sigma\bE$ be the canonical localization functor and fix an arbitrary object $z \in \bE_0$ --- we will show that the fiber $\bL_\Sigma \fiber z$ has a contractible classifying space, as the proof of Theorem \ref{thm:cellularloc} then follows from Theorem \ref{thm:quillenA}. While our fiber does not generally admit a homotopy maximal or minimal object in the sense of Definition \ref{def:homaxmin}, it may be mapped onto a contractible poset via a homotopy-equivalence. We begin by describing this target poset.

\subsection{Essential chains}

Recall from Definition \ref{def:fiber2cat} that an object of $\bL_\Sigma\fiber z$ is a pair $(w,\gamma)$ consisting of an object $w \in \bE_0$ and a morphism $\gamma:w \to z$ in $\Loc_\Sigma\bE$. And from Definition \ref{def:loc2cat}, it immediately follows (by removing all superfluous left-pointing identity maps) that any such $\gamma$ may be represented by a $\Sigma$-zigzag in $\bE$ which terminates at $z$:
\[
\xymatrixrowsep{0.4in}
\xymatrixcolsep{0.4in}
\xymatrix{
w \ar@{->}[r]^{g_0} & y_0 & x_0 \ar@{->}[l]_{f_0} \ar@{->}[r]^{g_1} & y_1 & \cdots \ar@{->}[l]_{f_1} &  x_k \ar@{->}[l]_{f_k} \ar@{->}[r]^{g_{k+1}} & z
}
\]
Here the left-pointing arrows $y_\bullet \gets x_\bullet$ must be chosen from $\Sigma$, and by the {\bf exhaustion} requirement of Definition \ref{def:morsys} none of the forward-pointing arrows (except possibly the extreme $g_0$ and $g_{k+1}$) are allowed to be identity maps. Moreover, the existence of $g_\bullet:x_\bullet \to y_{\bullet+1}$ forces our left-pointing morphisms to satisfy
\[
f_0 \flord f_1 \flord \cdots \flord f_k \text{ and } \bE(x_k,z) \neq \varnothing.
\]
by the {\bf order} axiom from Definition \ref{def:morsys}. Thus, each $\Sigma$-zigzag in $\bE$ furnishes a finite descending chain of elements in $\Sigma$ whose last element connects to $z$. The following definition puts such descending chains in their proper context.
\begin{definition}
Let $\Sigma_z$ be the poset of {\bf $\Sigma$-chains to $1_z$}, which has as its elements all strictly descending chains $\sigma = (f_0 \flord \cdots \flord f_k \flord 1_z)$ in $\Sigma$, where the last $\flord$ (in a mild abuse of notation) merely indicates that the source of $f_k:x_k \to y_k$ is required to satisfy $\bE(x_k,z) \neq \varnothing$. The partial order is given by inclusion: $\sigma \inc \tau$ whenever $\tau$ contains $\sigma$ as a (not necessarily contiguous) subsequence.
\end{definition}

A trivial application of Lemma \ref{lem:qinit} reveals that $\Sigma_z$ has a contractible classifying space: the empty chain $(1_z)$ clearly includes into all the others and hence serves as a minimal object. Our goal is to construct a strict p-functor $\bL_\Sigma\fiber z \to \Sigma_z$ which induces a homotopy-equivalence of classifying spaces, and hence yields the desired contractibility of $\bL_\Sigma\fiber z$. Unfortunately, the na\"{i}ve assignment of descending chains to $\Sigma$-zigzags described above is not well-defined, since distinct zigzags which represent the same morphism in $\Loc_\Sigma\bE$ might have different $\Sigma$-chains. Recall, for instance, the horizontal and vertical reductions of Remark \ref{rmk:reductions} which allowed us to identify
\[
\left(\cdots \stackrel{g\circ f}{\longrightarrow} y \stackrel{f}{\gets} x \stackrel{h}{\to} \cdots\right) \sim 
\left(\cdots \stackrel{g\circ h}{\longrightarrow} \cdots\right).
\]
It is clear that the zigzag on the left is assigned a $\Sigma$-chain containing $f$ while the zigzag on the right is not. The following definition is designed to rectify this defect.

\begin{definition}
\label{def:redundant}
Given a $\Sigma$-zigzag $\zeta$:
\[
\xymatrixrowsep{0.4in}
\xymatrixcolsep{0.4in}
\xymatrix{
w \ar@{->}[r]^{g_0} & y_0 & x_0 \ar@{->}[l]_{f_0} \ar@{->}[r]^{g_1} & y_1 & \cdots \ar@{->}[l]_{f_1} &  x_k \ar@{->}[l]_{f_k} \ar@{->}[r]^{g_{k+1}} & z
}
\]
and an index $j \in \{0,\ldots,k\}$, we call $f_j$
\begin{enumerate}
\item {\bf left-redundant} in $\zeta$ if $h\com f_j \twomor g_j$ for some $h:x_{j-1} \to x_j$,
\item {\bf right-redundant} in $\zeta$ if $f_j \com \ell \twomor g_{j+1}$ for some $\ell:y_j \to y_{j+1}$, and
\item {\bf essential} in $\zeta$ if it is neither left nor right-redundant.
\end{enumerate} 
(It is convenient here to adopt the convention $x_{-1} = w$ and $y_{k+1} = z$ in order to avoid separately mentioning these extreme cases). Without loss of generality, the $h$ and $\ell$ above may be chosen to be the atoms of $\bE(x_{j-1},x_j)$ and $\bE(y_{j-1},y_j)$ respectively.
\end{definition}

Here we illustrate a segment of $\zeta$ containing an $f_j$ which is both left-redundant (due to the triangle which precedes it) and right-redundant (due to the triangle which comes afterwards):
\[\xymatrixcolsep{0.3in}
\xymatrixrowsep{0.2in}
\xymatrix{%
 & x_j \ar@{->}[dr]^{f_j} \ar@{=>}[d] & & &   \\
 x_{j-1} \ar@{-->}[ur]^{h} \ar@{->}[rr]_{g_j} & & y_j  & x_j \ar@{->}[l]_{f_j} \ar@{->}[dr]_{f_j} \ar@{->}[rr]^{g_{j+1}} & & y_{j+1} \\
& & & & y_j \ar@{-->}[ur]_{\ell} \ar@{=>}[u] & &
	}
\]
\begin{remark}
\label{rmk:redcont}
Note that if $f_j$ is redundant in $\zeta$, then we must have $f_{j-1} \flord f_{j+1}$ since the poset $\bE(x_{j-1},y_{j+1})$ is nonempty --- it contains either $h \com g_{j+1}$ or $g_j \com \ell$ (or both) depending on whether $f_j$ is left or right-redundant (or both).  
\end{remark}
In light of the preceding remark, the collection of essential $f_\bullet$'s in $\zeta$ forms an element of $\Sigma_z$, which we denote by $\sigma_\zeta$ and call the {\bf essential chain} of $\zeta$. Note that $\sigma_\zeta = (1_z)$ whenever $\zeta$ contains no essential $f_\bullet$'s. 

\subsection{Well-definedness of essential chains}

The primary reason for introducing essential chains is the following result.
\begin{proposition}
\label{prop:esswelldef}
If $\zeta$ and $\zeta'$ are $\Sigma$-zigzags which represent the same morphism $\gamma:w \to z$ in $\Loc_\Sigma\bE$, then they have the same essential chain, i.e., $\sigma_\zeta = \sigma_{\zeta'}$. 
\end{proposition}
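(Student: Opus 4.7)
The plan is to demonstrate that $\sigma_\zeta$ is invariant under each of the two generating equivalences for morphisms in $\Loc_\Sigma\bE$ from Definition \ref{def:loc2cat}: horizontal reductions (intermediate identity insertion or deletion) and vertical reductions (two zigzags forming the rows of a commuting diagram with verticals in $\Sigma^+$). Since both relations generate $\sim$, invariance under each elementary move suffices.

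Horizontal moves are immediate. Identity maps lie in $\Sigma^+ \setminus \Sigma$ by directedness of $\Sigma$, so they never enter any $\Sigma$-chain, and composing away an intermediate identity leaves surrounding right-pointing arrows unchanged, thereby preserving the left/right-redundancy status of every remaining $f_\bullet$.

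The substantive work concerns vertical moves. Let $\zeta$ and $\zeta'$ occupy the top and bottom rows of such a diagram with verticals $u_j : y_j \to y'_j$ and $v_j : x_j \to x'_j$ drawn from $\Sigma^+$; commutativity at position $j$ reads $v_j \com f'_j = f_j \com u_j$. I would analyze each position via three cases based on membership in $\Sigma$: (i) both $f_j, f'_j$ are identities, contributing nothing to either essential chain at position $j$; (ii) both lie in $\Sigma$, whereupon the exhaustion axiom of Definition \ref{def:morsys} forbids $u_j$ or $v_j$ from being a non-identity $\Sigma$-element (otherwise $x_j$ or $y_j$ would be the source or target of two distinct $\Sigma$-morphisms), forcing $u_j = 1_{y_j}$ and $v_j = 1_{x_j}$, and hence $x_j = x'_j$, $y_j = y'_j$, and $f_j = f'_j$ by uniqueness of atoms; (iii) exactly one of $f_j, f'_j$ lies in $\Sigma$---say $f_j \in \Sigma$ while $f'_j$ is an identity, so $x'_j = y'_j$---in which case the square reduces to $v_j = f_j \com u_j$, and a further application of exhaustion and atom uniqueness pins down $v_j = f_j$ and $u_j = 1_{y_j}$, effectively absorbing $f_j$ into the vertical. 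Propagating into the commutation around the adjacent right-pointing map yields $g_{j+1} \com u_{j+1} = f_j \com g'_{j+1}$; iterating the same exhaustion argument through any subsequent block of identity-identity positions cancels $u_{j+1}$ and manifests an explicit factorization $g_{j+1} = f_j \com \ell$ with $\ell : y_j \to y_{j+1}$, showing that $f_j$ is right-redundant in $\zeta$ and hence absent from $\sigma_\zeta$, matching the absence of $f'_j$ from $\sigma_{\zeta'}$. Combining the three cases position-by-position yields $\sigma_\zeta = \sigma_{\zeta'}$.

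The main obstacle will be the mixed case (iii). Absorbing $f_j$ locally into the vertical $v_j$ is easy, but translating that absorption into genuine right-redundancy requires propagating the commutation $g_{j+1} \com u_{j+1} = f_j \com g'_{j+1}$ through contiguous identity-identity positions until $u_{j+1}$ is forced to be trivial. The mechanism is exhaustion: any non-identity $u_{j+1} \in \Sigma$ would itself create a $\Sigma$-morphism sharing endpoints with a neighboring $f_\bullet$ or $f'_\bullet$, cascading into contradictions and ultimately collapsing the relevant verticals to identities. Once this rigidity is established along the zigzag, the proposition follows by summing the three local cases across all positions.
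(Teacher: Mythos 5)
Your overall strategy matches the paper's: reduce to a single vertical move, normalize the columns, and use the \textbf{exhaustion} axiom to classify each left-pointing column into the same three configurations. The gap is in your case (iii), which carries all the weight. After absorbing $f_j$ into the vertical ($v_j = f_j$, $u_j = 1_{y_j}$), you claim that exhaustion forces the adjacent vertical $u_{j+1}$ to be an identity, so that the square $g_{j+1}\com u_{j+1} = f_j \com g'_{j+1}$ degenerates into an on-the-nose factorization of $g_{j+1}$ through $f_j$. That is false in general: if the left-pointing column at position $j+1$ is itself of mixed type with the $\Sigma$-morphism in the \emph{other} row (top entry an identity at $y_{j+1}=x_{j+1}$, bottom entry some $f'_{j+1}\in\Sigma$ with source $y_{j+1}$), then $u_{j+1} = f'_{j+1}$ is a genuine element of $\Sigma$ and no exhaustion contradiction arises --- this is a perfectly legal configuration, and it is not one that the normalization removes (your ``identity-identity blocks'' do not even occur in interior columns after normalization, so there is nothing to propagate through). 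In that sub-case $g_{j+1}$ does not factor through $f_j$ directly; one must apply the \textbf{lifting} axiom of Definition \ref{def:morsys} to the square $f_j\com g'_{j+1} \twomor g_{j+1}\com f'_{j+1}$ to produce $p: y_j \to y_{j+1}$ with $f_j\com p \twomor g_{j+1}$, and it is this $p$ that certifies right-redundancy. The paper's proof invokes lifting for exactly this purpose; your proposal never uses it, and the ``cascading contradiction'' you substitute for it does not materialize.

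A second, smaller omission: in case (ii) you stop after concluding $f_j = f'_j$. But the essential chain records only the \emph{essential} occurrences, so you must also verify that $f_j$ is redundant in $\zeta$ if and only if it is redundant in $\zeta'$. The flanking maps $g_j$ and $g'_j$ need not coincide --- they can differ by a $\Sigma$-vertical inherited from a neighboring mixed column --- so equality of the $\Sigma$-letters appearing at each position does not by itself yield equality of essential chains; transferring the redundancy witness across the rows again requires a lifting-type argument.
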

\begin{proof}
It suffices to establish that the essential chain of a $\Sigma$-zigzag remains unchanged when vertical reductions (in the sense of Remark \ref{rmk:reductions}) are performed, since horizontal reductions only remove identity maps and hence leave the $\Sigma$-chain of a zigzag invariant. So without loss of generality, we may assume that $\zeta$ and $\zeta'$ form the top and bottom rows of the diagram below:
\[
\xymatrixcolsep{0.4in}
\xymatrixrowsep{0.4in}
\xymatrix{
w \ar@{->}[r]^{g_0} \ar@{=}[d]_{} &  z_0  \ar@{->}[d]^{} & w_0 \ar@{->}[l]_{} \ar@{->}[r]^{g_1} \ar@{->}[d]^{} & \cdots & w_{k} \ar@{->}[r]^{g_{k+1}} \ar@{->}[l]_{} \ar@{->}[d]_{} & z \ar@{=}[d] \\
w \ar@{->}[r]_{g'_0} \ar@{=}[ur]^{}	&  z'_0 	& w'_0 \ar@{->}[l]^{}  	\ar@{->}[r]_{g'_1} \ar@{=}[ul]_{} \ar@{=}[ur]_{} & \cdots & w'_{k} \ar@{->}[l]^{} \ar@{->}[r]_{g'_{k+1}} \ar@{=}[ul]^{} \ar@{=}[ur]^{}& z
}
\]
where all leftward and downward-pointing arrows denote morphisms in $\Sigma^+$ (i.e., either elements of $\Sigma$ or identities). We may further simplify the diagram by insisting that --- performing horizontal reductions to both rows if necessary --- each column (except possibly the first and last ones) has at most one horizontal identity map. Now by the {\bf exhaustion} requirement of Definition \ref{def:morsys}, there are only three possible configurations of the backward-pointing columns:
\[
\xymatrixcolsep{0.4in}
\xymatrixrowsep{0.4in}
\xymatrix{
x_\bullet \ar@{->}[d]_{f_\bullet} & x_\bullet \ar@{=}[l] \ar@{=}[d] & 
y_\bullet \ar@{=}[d] & x_\bullet \ar@{->}[l]_{f_\bullet} \ar@{->}[d]^{f_\bullet} &
y_\bullet \ar@{=}[d] & x_\bullet \ar@{->}[l]_{f_\bullet} \ar@{=}[d] \\
y_\bullet  & x_\bullet \ar@{->}[l]^{f_\bullet} \ar@{=}[ul] &
y_\bullet  & y_\bullet \ar@{=}[l] \ar@{=}[ul] &
y_\bullet  & x_\bullet \ar@{->}[l]^{f_\bullet} \ar@{=}[ul]
}
\]
The desired result now follows from a brief examination of these configurations. As an example, consider the first configuration above where $f_\bullet$ appears in $\zeta'$ but not in $\zeta$. We claim that this $f_\bullet$ must be left-redundant in $\zeta'$. There are only two possibilities for what could appear on the left:
\[
\xymatrixcolsep{0.4in}
\xymatrixrowsep{0.4in}
\xymatrix{
x_{\bullet-1} \ar@{=}[d]_{} \ar@{->}[r]^{g_{\bullet}} & x_\bullet \ar@{->}[d]^{f_\bullet} & 
x_{\bullet-1} \ar@{->}[r]^{g_{\bullet}} \ar@{->}[d]_{f_{\bullet-1}} & x_\bullet \ar@{->}[d]^{f_\bullet} \\
x_{\bullet-1} \ar@{=}[ur] \ar@{->}[r]_{g'_{\bullet}} & y_\bullet  &
y_{\bullet-1} \ar@{->}[r]_{g'_{\bullet}} \ar@{=}[ur] & y_\bullet  
}
\]
In the first case, we have $g_\bullet \com f_\bullet = g'_\bullet$ and so it is immediate from Definition \ref{def:redundant} that $f_\bullet$ is left-redundant in $\zeta'$. In the second case, one applies the {\bf lifting} axiom from Definition \ref{def:morsys} and concludes the existence of some $p:y_{\bullet-1} \to x_{\bullet}$ satisfying $p \com f_\bullet \twomor g'_\bullet$. Again, this forces $f_\bullet$ to be left-redundant in $\zeta'$. Similar arguments on the remaining configurations yield the desired conclusion.
\end{proof}

Armed with the preceding proposition, we may unambiguously associate to each object $(w,\gamma)$ in the fiber $\bL_\Sigma\fiber z$ the element $\sigma_\gamma$ of $\Sigma_z$ which equals the essential chain of {\em any} zigzag representing the morphism $\gamma \in \Loc_\Sigma\bE(w,z)$.
\begin{definition}
Let $\bN:(\bL_\Sigma \fiber z)_0 \to (\Sigma_z)_0$ be the function which sends each object of $\bL_\Sigma \fiber z$ to its essential chain in $\Sigma_z$. That is, $\bN(w,\gamma) = \sigma_\gamma$. 
\end{definition}

\subsection{Contractibility of the fibers}

In this subsection we establish two facts --- first, that the function $\bN$ defined above actually prescribes a functor from the fiber $\bL_\Sigma\fiber z$ to the contractible poset $\Sigma_z$ of descending chains; and second, that our functor $\bN: \bL_\Sigma \fiber z \to \Sigma_z$ induces a homotopy-equivalence on classifying spaces. The following result accomplishes the first task.

\begin{proposition}
\label{prop:nfunc}
Let $(w,\gamma)$ and $(w',\gamma')$ be objects of the fiber $\bL_\Sigma\fiber z$. If there exists a morphism $h:(w,\gamma) \to (w',\gamma')$, then there is an inclusion $\bN(w,\gamma) \inc \bN(w',\gamma')$ of the corresponding essential chains in $\Sigma_z$. 
\end{proposition}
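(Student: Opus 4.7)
Unpacking the data, the morphism $h:(w,\gamma)\to(w',\gamma')$ in $\bL_\Sigma\fiber z$ consists of an $h\in\bE(w,w')$ satisfying $\bL_\Sigma h\com\gamma'\twomor\gamma$ in $\Loc_\Sigma\bE(w,z)$. The plan is to establish $\sigma_\gamma\subseteq\sigma_{\gamma'}$ by splitting it as $\sigma_\gamma\subseteq\sigma_{\bL_\Sigma h\com\gamma'}\subseteq\sigma_{\gamma'}$ and handling each half separately.

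For the right-hand inclusion I would fix any $\Sigma$-zigzag $\zeta'$ representing $\gamma'$, with initial forward arrow $g'_0:w'\to y'_0$, and form the representative $\eta$ of $\bL_\Sigma h\com\gamma'$ obtained by replacing $g'_0$ with the composite $h\com g'_0$ and leaving every other arrow of $\zeta'$ intact. For each $j\geq 1$ the data governing the left/right-redundancy of $f'_j$ is unaffected; at $j=0$, any left-redundancy witness $q:w'\to x'_0$ for $\zeta'$ yields the witness $h\com q$ for $\eta$, while right-redundancy at $j=0$ is indifferent to the modification. Hence $f'_0$ is essential in $\eta$ only if it was essential in $\zeta'$, and $\sigma_\eta\subseteq\sigma_{\zeta'}=\sigma_{\gamma'}$.

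For the left-hand inclusion I would prove the auxiliary statement that $\gamma_1\twomor\gamma_2$ in $\Loc_\Sigma\bE$ forces $\sigma_{\gamma_2}\subseteq\sigma_{\gamma_1}$. Via Definition \ref{def:loc2cat}(3), select representatives $\zeta$ of $\gamma_2$ and $\tilde\eta$ of $\gamma_1$ arranged in the top and bottom rows of a commuting diagram with $\Sigma^+$-valued verticals and $\twomor$-triangles; Proposition \ref{prop:esswelldef} identifies $\sigma_{\tilde\eta}$ with $\sigma_{\gamma_1}$, so it suffices to show $\sigma_\zeta\subseteq\sigma_{\tilde\eta}$. I would repeat the three-case analysis of backward-column configurations used in Proposition \ref{prop:esswelldef}. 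In the configuration where both rows carry the same $f$, exhaustion forces the verticals at both endpoints of this column to be identities, and the lifting axiom applied to the $\twomor$-triangles at the two adjacent forward arrows transfers any redundancy witnesses from $\tilde\eta$ back to $\zeta$, so essentiality of $f$ in $\zeta$ implies essentiality in $\tilde\eta$. The configuration where $f$ appears only in $\tilde\eta$ poses no threat to the desired inclusion. The critical configuration has $f$ present in $\zeta$ but absent (replaced by an identity) in $\tilde\eta$; here, exhaustion identifies the right-hand vertical of the column with $f$ itself, the $\twomor$-triangle immediately to the right of that column yields $f\com g'_{\bullet+1}\twomor g_{\bullet+1}\com u$ for the next vertical $u\in\Sigma^+$, and the lifting axiom (Definition \ref{def:morsys}(3)) supplies $p:y_\bullet\to y_{\bullet+1}$ with $f\com p\twomor g_{\bullet+1}$, forcing right-redundancy of $f$ in $\zeta$ and contradicting the assumed essentiality.

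Combining the two halves yields $\sigma_\gamma=\sigma_\zeta\subseteq\sigma_{\tilde\eta}=\sigma_\eta\subseteq\sigma_{\gamma'}$, which is the desired $\bN(w,\gamma)\inc\bN(w',\gamma')$. The main obstacle is the critical configuration above: one must orient the $\twomor$-triangles consistently with the convention that bottom-row zigzags are $\twomor$ top-row zigzags in $\Loc_\Sigma\bE$, and invoke the lifting axiom in precisely the right form to extract the right-redundancy witness. Once the orientation is pinned down, the remaining arguments amount to routine extensions of the bookkeeping already carried out in Proposition \ref{prop:esswelldef}.
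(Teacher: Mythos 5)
Your argument is correct and follows the paper's proof in all essentials: the paper merges your two steps into a single two-row diagram whose first vertical arrow is $h$, then runs the same contrapositive case analysis on the backward-pointing columns and applies the {\bf lifting} axiom exactly as you do to produce the right-redundancy witness in the critical configuration. Your factorization through $\sigma_{\bL_\Sigma h \com \gamma'}$ is a harmless (arguably cleaner) reorganization of the same argument rather than a genuinely different route.
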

\begin{proof}
The argument proceeds very similarly to the one employed in the proof of Proposition \ref{prop:esswelldef}. Given a morphism $h:(w,\gamma) \to (w',\gamma')$ in $\bL_\Sigma\fiber z$, there must exist (suitably reduced) $\Sigma$-zigzags representing $\gamma$ and $\gamma'$ respectively which form the top and bottom rows of the following diagram:
\[
\xymatrixcolsep{0.4in}
\xymatrixrowsep{0.4in}
\xymatrix{
w \ar@{->}[r]^{g_0} \ar@{->}[d]_{h} &  z_0  \ar@{->}[d]^{} & w_0 \ar@{->}[l]_{} \ar@{->}[r]^{g_1} \ar@{->}[d]^{} & \cdots & w_{k} \ar@{->}[r]^{g_{k+1}} \ar@{->}[l]_{} \ar@{->}[d]_{} & z \ar@{=}[d] \\
w' \ar@{->}[r]_{g'_0} \ar@{=>}[ur]^{}	&  z'_0 	& w'_0 \ar@{->}[l]^{}  	\ar@{->}[r]_{g'_1} \ar@{=>}[ul]_{} \ar@{=>}[ur]_{} & \cdots & w'_{k} \ar@{->}[l]^{} \ar@{->}[r]_{g'_{k+1}} \ar@{=>}[ul]^{} \ar@{=>}[ur]^{}& z
}
\]
Here all leftward and downward pointing arrows (except $h$) are chosen from $\Sigma^+$ as usual, and each column (except possibly the first and last ones) contains at most one horizontal identity morphism. The backward-pointing columns may once again only assume one of three possible configurations:
\[
\xymatrixcolsep{0.4in}
\xymatrixrowsep{0.4in}
\xymatrix{
x_\bullet \ar@{->}[d]_{f_\bullet} & x_\bullet \ar@{=}[l] \ar@{=}[d] & 
y_\bullet \ar@{=}[d] & x_\bullet \ar@{->}[l]_{f_\bullet} \ar@{->}[d]^{f_\bullet} &
y_\bullet \ar@{=}[d] & x_\bullet \ar@{->}[l]_{f_\bullet} \ar@{=}[d] \\
y_\bullet  & x_\bullet \ar@{->}[l]^{f_\bullet} \ar@{=}[ul] &
y_\bullet  & y_\bullet \ar@{=}[l] \ar@{=}[ul] &
y_\bullet  & x_\bullet \ar@{->}[l]^{f_\bullet} \ar@{=}[ul]
}
\]
We now argue by contrapositive and show that if $f_\bullet$ is redundant in (or missing altogether from) $\gamma'$, then it must also be redundant in $\gamma$. Note that the first configuration above poses no threat, since $f_\bullet$ appears in $\gamma'$ but not in $\gamma$. We analyze the second configuration here and leave the third to the reader. The only possibilities for right-pointing columns which could appear after the second configuration are:
\[
\xymatrixcolsep{0.4in}
\xymatrixrowsep{0.4in}
\xymatrix{
x_{\bullet} \ar@{->}[d]_{f_\bullet} \ar@{->}[r]^{g_{\bullet+1}} & y_{\bullet+1} \ar@{=}[d] & 
x_{\bullet} \ar@{->}[r]^{g_{\bullet+1}} \ar@{->}[d]_{f_{\bullet}} & x_{\bullet+1} \ar@{->}[d]^{f_{\bullet+1}} \\
y_{\bullet} \ar@{=>}[ur] \ar@{->}[r]_{g'_{\bullet+1}} & y_{\bullet+1}  &
y_{\bullet} \ar@{->}[r]_{g'_{\bullet+1}} \ar@{=>}[ur] & y_{\bullet+1} 
}
\]
In the first case, $f_\bullet$ is right-redundant in $\gamma$ by definition, whereas in the second case it is right-redundant via an application of the {\bf lifting} axiom. In particular, there is some $p:y_\bullet \to x_{\bullet + 1}$ with $f_\bullet \com p \twomor g_{\bullet+1}$ (as desired).
\end{proof}

The preceding result guarantees that $\bN:\bL_\Sigma\fiber z \to \Sigma_z$ is a functor, so we now examine its fibers to show that it induces a homotopy-equivalence. We will proceed by induction over the poset $\Sigma_z$, using the following proposition as a base (recall that $(1_z)$ is the minimal chain in $\Sigma_z$).

\begin{proposition}
\label{prop:base}
The fiber $\bN\fiber (1_z)$ has a contractible classifying space.
\end{proposition}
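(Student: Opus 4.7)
The plan is to exhibit $(z,1_z)$ as a distinguished target of $\bN\fiber(1_z)$ via an atom-based construction, and then invoke the argument pattern of Lemma \ref{lem:qinit} (in its lax guise) to conclude contractibility. The object $(z,1_z)$ lies in the fiber since the trivial zigzag $z \stackrel{1_z}{\to} z$ contains no left-pointing arrows and so has essential chain $(1_z)$. Unwinding the iterated fiber construction of Definition \ref{def:fiber2cat}, the hom-poset $\bN\fiber(1_z)((w,\gamma),(z,1_z))$ is identified with $\{h \in \bE(w,z) \mid \bL_\Sigma h \twomor \gamma\}$ ordered as in $\bE(w,z)$ --- the two successive contravariances cancel. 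The centerpiece is to specify, for each $(w,\gamma) \in \bN\fiber(1_z)$, a canonical morphism $\eta_{(w,\gamma)} : (w,\gamma) \to (z,1_z)$ behaving naturally in the fiber, and my choice is $\eta_{(w,\gamma)} := a_w$, the atom of $\bE(w,z)$ supplied by cellularity (Definition \ref{def:cell2cat}). For this assignment to make sense I must establish that $\bE(w,z) \neq \varnothing$ and that $\bL_\Sigma a_w \twomor \gamma$ in $\Loc_\Sigma\bE$.

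Verifying these two properties is the main obstacle. I would fix a representative zigzag $\zeta$ of $\gamma$ in which every $f_\bullet$ is redundant (guaranteed by $\sigma_\gamma = (1_z)$ together with Proposition \ref{prop:esswelldef}) and induct on the number of its left-pointing arrows. At each redundant $f_j$, the relation $h \com f_j \twomor g_j$ (left-redundancy) or $f_j \com \ell \twomor g_{j+1}$ (right-redundancy) supplies a direct bypass morphism that splices out $f_j$: a vertical reduction as in Definition \ref{def:loc2cat}(2), with $f_j$ itself serving as the vertical $\Sigma^+$-morphism, yields a shorter zigzag lying below $\zeta$ in the order on $\Loc_\Sigma\bE(w,z)$. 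Iterating collapses the zigzag down to a direct morphism $h \in \bE(w,z)$ with $\bL_\Sigma h \twomor \gamma$; atomicity then gives $a_w \twomor h$ in $\bE$, and monotonicity of $\bL_\Sigma$ delivers $\bL_\Sigma a_w \twomor \gamma$.

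Naturality of the assignment $\eta$ is immediate from the atom property: for any morphism $k : (w,\gamma) \to (w',\gamma')$ of the fiber, the composite $k \com a_{w'}$ belongs to $\bE(w,z)$, so $a_w \twomor k \com a_{w'}$ holds automatically by minimality of $a_w$. Together with $\eta_{(z,1_z)} = 1_z$, this furnishes the lax natural transformation between the identity on $\bN\fiber(1_z)$ and the constant functor at $(z,1_z)$ --- precisely the lax variant alluded to in the closing paragraph of the proof of Lemma \ref{lem:qinit}. The resulting homotopy equivalence makes $\bN\fiber(1_z)$ homotopy-equivalent to the trivial p-category, yielding the claimed contractibility.
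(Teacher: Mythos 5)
Your overall architecture matches the paper's: reduce everything to showing that the poset $\bQ_\gamma = \{g \in \bE(w,z) \mid \bL_\Sigma g \twomor \gamma\}$ is nonempty, after which the atom of $\bE(w,z)$ gives the canonical morphism to $(z,1_z)$ and Lemma \ref{lem:qinit} finishes. The gap is in the iteration. A single splice is fine: right-redundancy $f_j \com \ell \twomor g_{j+1}$ lets you put $y_j \stackrel{1}{\gets} y_j \stackrel{\ell}{\to} y_{j+1}$ under the column, and left-redundancy lets you first lower $g_j$ to $h \com f_j$ and then cancel as in Remark \ref{rmk:reductions}; either way you obtain a strictly shorter zigzag below $\zeta$. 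But the shorter zigzag represents a \emph{different} morphism of $\Loc_\Sigma\bE(w,z)$, and nothing guarantees that the surviving $f_i$'s are still redundant in it. Concretely, suppose $f_{j-1}$ is right-redundant only (via $\ell_{j-1}:y_{j-1}\to y_j$) and $f_j$ is left-redundant only (via $h_j:x_{j-1}\to x_j$). Splicing out $f_{j-1}$ replaces the arrow into $y_j$ by $g_{j-1}\com\ell_{j-1}$ sourced at $x_{j-2}$, and the witness $h_j$ for the left-redundancy of $f_j$ no longer composes with anything in sight; splicing out $f_j$ first replaces the arrow out of $x_{j-1}$ by $h_j\com g_{j+1}$, destroying the witness for the right-redundancy of $f_{j-1}$. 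In either order the induction stalls. (Note also that Proposition \ref{prop:nfunc} runs the wrong way for you: passing to a $\twomor$-smaller morphism can only \emph{enlarge} the essential chain, so redundancy of everything is genuinely at risk as you descend.)

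This is exactly the configuration the paper isolates, and it is resolved only by the \textbf{switching} axiom of Definition \ref{def:morsys}, which supplies the atom $q:y_{j-1}\to x_j$ with $f_{j-1}\com q\com f_j \twomor g_j$ and lets one route a direct path $w\to\cdots\to y_{j-1}\to x_j\to\cdots\to z$ through the diagram. Your argument never invokes switching, which should be a red flag: the second calculation in Section \ref{calc2} shows that for the face poset $\Fc{\Sph}$ (where lifting and switching fail) there are zigzags with empty essential chain, such as $(t>z<b>y<x>w)$, admitting \emph{no} direct morphism beneath them, so $\bQ_\gamma$ can genuinely be empty absent that axiom. To repair the proof, replace the one-at-a-time splicing with the paper's global picture --- assemble all the atomic witnesses $h_\bullet$ and $\ell_\bullet$ into a single two-row diagram and extract one directed path from $w$ to $z$, inserting a switching atom $q_\bullet$ at each ``right-redundant-only followed by left-redundant-only'' junction --- and then verify that the resulting composite lies in $\bQ_\gamma$ using $f_{\bullet-1}\com q_\bullet\com f_\bullet \twomor g_\bullet$.
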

\begin{proof}
We will show that the object $(z,1_z)$ is homotopy-maximal in $\bN\fiber (1_z)$ and obtain the desired contractibility by Lemma \ref{lem:qinit}. Meditating on the following diagram of $\Loc_\Sigma\bE$:
\[
\xymatrixcolsep{0.3in}
\xymatrixrowsep{0.4in}
\xymatrix{
w \ar@{..>}[rr]_*+<1.4em>{{\Longleftarrow}}^{} \ar@{->}[dr]_{\gamma} & & z \ar@{->}[dl]^{1_z} \\
 & z & 
}
\]
we must show that the subposet $\bQ_\gamma \subset \bE(w,z)$ defined by
\[
\bQ_\gamma = \{g \in \bE(w,z) \mid g \twomor \gamma \text{ in }\Loc_\Sigma\bE\}
\]
has a minimal element\footnote{Minimal objects in $\bE(w,z)$ yield maximal objects in $\bL_\Sigma\fiber z$, thanks to the contravariance of partial orders mentioned in Definition \ref{def:fiber2cat}.} (here we have identified $g$ with its image $\bL_\Sigma g$). It suffices to show that $\bQ_\gamma$ is nonempty, since the atom of $\bE(w,z)$ would then constitute the desired minimal element. To this end, assume that the following $\Sigma$-zigzag represents the morphism $\gamma$
\[
\xymatrixrowsep{0.4in}
\xymatrixcolsep{0.4in}
\xymatrix{
w \ar@{->}[r]^{g_0} & y_0 & x_0 \ar@{->}[l]_{f_0} \ar@{->}[r]^{g_1} & y_1 & \cdots \ar@{->}[l]_{f_1} &  x_k \ar@{->}[l]_{f_k} \ar@{->}[r]^{g_{k+1}} & z
}
\]
Since $\bN(w,\gamma) = (1_z)$, all the $f_\bullet$'s appearing above are (left, right or multiply) redundant. By Definition \ref{def:redundant}, this zigzag therefore forms the solid, non-horizontal arrows in some diagram of $\bE$ which resembles the following one:
\[
\xymatrixrowsep{0.5in}
\xymatrixcolsep{0.5in}
\xymatrix{
w \ar@{..>}[r]^{h_0} \ar@{->}[d]_{} & x_0 \ar@{..>}[r]^{h_1} \ar@{->}[dl]_*+<0.5em>{\Longleftarrow}^*+<0.5em>{\Longrightarrow} \ar@{->}[d] & \cdots 
\ar@{..>}[r]^{h_{k-1}} \ar@{->}[dl]_*+<0.5em>{\Longleftarrow}^*+<0.5em>{\Longrightarrow} & x_{k-1} \ar@{->}[dl]_*+<0.5em>{\Longleftarrow}^*+<0.5em>{\Longrightarrow}  
\ar@{->}[d] \ar@{..>}[r]^{h_k} & x_k \ar@{->}[dl]_*+<0.5em>{\Longleftarrow}^*+<0.5em>{\Longrightarrow}  \ar@{->}[d]\\
y_0 \ar@{..>}[r]_{\ell_0} & y_1 \ar@{..>}[r]_{\ell_1} & \cdots \ar@{..>}[r]_{\ell_{k-1}} & y_k \ar@{..>}[r]_{\ell_{k}} & z
}
\]
Here each column contains either an $h_\bullet$ above or an $\ell_\bullet$ below (and possibly both), depending on whether the diagonal $f_\bullet$ is left or right-redundant in $\gamma$ (or both). Moreover, we can assume that all $h_\bullet$ and $\ell_\bullet$'s in sight, whenever they exist, are atomic in their respective posets. All directed paths  from $w$ to $z$ in our diagram prescribe compositions in $\bE$ --- such as the extremal $h_0 \com \cdots \com h_k \com g_{k+1}$ and $g_0 \com \ell_0 \com \cdots \com \ell_k$ --- and yield elements of $\bQ_\gamma$.  The only admissible configuration of $h_\bullet$'s and $\ell_\bullet$'s where a direct path is not immediately available arises when some $f_{\bullet-1}$ is right (but not left) redundant and the next $f_{\bullet}$ is left (but not right) redundant:
\[
\xymatrixrowsep{0.5in}
\xymatrixcolsep{0.5in}
\xymatrix{
x_{\bullet-2} \ar@{->}[d]_{g_{\bullet-1}} & x_{\bullet-1} \ar@{->}[d]_{g_{\bullet}}  \ar@{->}[dl]^*+<0.5em>{\Longrightarrow}_{f_{\bullet-1}} \ar@{..>}[r]^{h_{\bullet}} & x_{\bullet} \ar@{->}[d]^{g_{\bullet+1}} \ar@{->}[dl]_*+<0.5em>{\Longleftarrow}^{f_\bullet} \\
y_{\bullet-1}\ar@{..>}[r]_{\ell_{\bullet-1}} & y_{\bullet} & y_{\bullet + 1}
}
\]
But in this case, the {\bf switching} axiom of Definition \ref{def:morsys} applies to the parallelogram above and guarantees the existence of the atom $q_\bullet:y_{\bullet-1} \to x_{\bullet}$ in $\bE$ which furnishes a connection from $x_{\bullet-2}$ to $x_{\bullet}$, so in general one can connect $w$ to $z$ via a directed path even when confronting the unfortunate configuration described above. Moreover, since $q_\bullet$ is required to satisfy $f_{\bullet-1} \com q_\bullet \com f_{\bullet} \twomor g_{\bullet}$, one can easily check that the directed path so obtained also creates an element of $\bQ_\gamma$, as desired.
\end{proof}

This next proposition forms the final, inductive step when analyzing the fibers of our functor $\bN:\bL_\Sigma\fiber z \to \Sigma_z$. Since the target category $\Sigma_z$ is a poset, the fiber $\bN \fiber \sigma$ over a descending chain $\sigma = (f_0 \flord \cdots \flord f_k \flord 1_z)$ is precisely the full subcategory of $\bL_\Sigma\fiber z$ generated by all objects $(w,\gamma)$ satisfying $\bN(w,\gamma) \inc \sigma$. Thus, {\em all fibers of $\bN$ encountered henceforth will be treated as full subcategories of $\bL_\Sigma\fiber z$}, keeping in mind that if $\tau \inc \sigma$ holds for two descending chains in $\Sigma_z$ then $\bN\fiber\tau$ includes into $\bN\fiber\sigma$ as a full subcategory.

\begin{proposition}
\label{prop:indstep}
Let $\sigma = (f_0 \flord \cdots \flord f_k \flord 1_z)$ be a non-trivial descending chain in $\Sigma_z$, and assume that the union $\bA = \bigcup_\tau (\bN \fiber \tau)$ of $\bN$'s fibers over $\{\tau \in \Sigma_z \mid \sigma \neq \tau \inc \sigma\}$ is a contractible subcategory of $\bL_\Sigma\fiber z$. Then, the fiber $\bN \fiber \sigma$ is also contractible.
 \end{proposition}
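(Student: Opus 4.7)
My plan is to apply the p-categorical version of Quillen's Theorem~A (Theorem~\ref{thm:quillenA}) to the full-subcategory inclusion $\iota\colon \bA \hookrightarrow \bN\fiber\sigma$. Because $\bA$ is contractible by hypothesis, verifying that $\iota$ induces a homotopy-equivalence of classifying spaces suffices to establish contractibility of $\bN\fiber\sigma$; and by Theorem~\ref{thm:quillenA}, this reduces to checking that each fiber p-category $\iota\fiber(w,\gamma)$ has contractible classifying space. When $(w,\gamma)$ already lies in $\bA$, the pair $((w,\gamma),1_{(w,\gamma)})$ serves as a homotopy-maximal object of $\iota\fiber(w,\gamma)$ in the sense of Definition~\ref{def:homaxmin}: any other object $((w',\gamma'),h)$ of the fiber has $h$ itself as the maximum of the relevant morphism-poset, once the contravariance introduced by Definition~\ref{def:fiber2cat} is taken into account. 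Lemma~\ref{lem:qinit} then delivers contractibility in this case.

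The substantive case is $\bN(w,\gamma) = \sigma$, and here I would seek a homotopy-maximal object $((w^\flat,\gamma^\flat),h^\flat)$ of $\iota\fiber(w,\gamma)$ with $(w^\flat,\gamma^\flat)\in\bA$. Fix a representative $\Sigma$-zigzag
\[
\zeta\colon\quad w \xrightarrow{g_0} y_0 \xleftarrow{f_0} x_0 \xrightarrow{g_1} y_1 \xleftarrow{f_1} \cdots \xleftarrow{f_k} x_k \xrightarrow{g_{k+1}} z
\]
of $\gamma$ in which every $f_j$ is essential. The guiding idea, which mirrors the closing portion of Proposition~\ref{prop:base}, is a local surgery on $\zeta$: whenever a consecutive pair $(f_j,f_{j+1})$ satisfies the hypothesis of the \textbf{switching} axiom of Definition~\ref{def:morsys}, namely that both $\bE(x_j,x_{j+1})$ and $\bE(y_j,y_{j+1})$ are nonempty, the resulting diagonal atom $q\colon y_j \to x_{j+1}$ (with $f_j \com q \com f_{j+1} \twomor v$) permits replacement of the sub-zigzag $y_j \xleftarrow{f_j} x_j \xrightarrow{g_{j+1}} y_{j+1} \xleftarrow{f_{j+1}} x_{j+1}$ by the single shortcut $y_j \xrightarrow{q} x_{j+1}$. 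This produces a new $\Sigma$-zigzag $\zeta^\flat$ whose essential chain is a proper subsequence of $\sigma$, so $(w^\flat,\gamma^\flat)$ lies in $\bA$, and the associated $h^\flat$ reads off directly from the switching inequality combined with the vertical and horizontal reduction rules of Definition~\ref{def:loc2cat}.

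The principal obstacles I anticipate are twofold. First, one must confirm the switching hypothesis can actually be arranged along $\zeta$ in all cases; the order axiom $f_j \flord f_{j+1}$ furnishes only $\bE(x_j,y_{j+1}) \neq \varnothing$, while the switching axiom demands the stronger nonemptiness on both sides of the square, so exceptional configurations (most notably $k = 0$, where no consecutive pair exists) will require an alternative shortcut --- most likely a direct adaptation of the construction used in Proposition~\ref{prop:base} to exit into the trivial stratum $\bN\fiber(1_z)$ of $\bA$. Second, and more delicate, is verifying that $((w^\flat,\gamma^\flat),h^\flat)$ is genuinely homotopy-maximal in $\iota\fiber(w,\gamma)$: given any $((w'',\gamma''),k)$ in the fiber with $(w'',\gamma'')\in\bA$, one must produce a maximum morphism $j\colon (w'',\gamma'')\to(w^\flat,\gamma^\flat)$ in $\bA$ satisfying $j \com h^\flat \twomor k$. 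This universal property is expected to follow by applying the \textbf{lifting} axiom of Definition~\ref{def:morsys} to the squares appearing in any $\Sigma$-zigzag witnessing $k$, in the same manner as in Proposition~\ref{prop:esswelldef} where parallel zigzag relations in $\Loc_\Sigma\bE$ were promoted to comparable $\bE$-morphisms.
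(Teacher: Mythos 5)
Your high-level reduction --- apply Theorem \ref{thm:quillenA} to the full inclusion $\iota\colon\bA\inc\bN\fiber\sigma$, with the fibers over objects of $\bA$ handled by the identity morphism --- is a legitimate and genuinely different skeleton from the paper's. But the substantive case rests on a mechanism that cannot work as described. The \textbf{switching} axiom does not merely require $\bE(x_j,x_{j+1})$ and $\bE(y_j,y_{j+1})$ to be nonempty: it also requires their atoms $h$ and $\ell$ to bound a square with some $v\colon x_j\to y_{j+1}$ satisfying $f_j\com\ell\twomor v$ and $h\com f_{j+1}\twomor v$. Along your zigzag $\zeta$ the only available candidate for $v$ is $g_{j+1}$, and those two relations are, by Definition \ref{def:redundant}, precisely the statements that $f_j$ is right-redundant and $f_{j+1}$ is left-redundant in $\zeta$. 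Since you chose $\zeta$ with every $f_j$ essential, the hypothesis of switching fails by construction, and the shortcut $q\colon y_j\to x_{j+1}$ is simply unavailable. (Note that in Proposition \ref{prop:base} switching is invoked in exactly the opposite regime, where every $f_\bullet$ is redundant.) Your fallback of ``exiting into the stratum $\bN\fiber(1_z)$'' inherits the same problem.

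The deeper issue is that even with some shortcut in hand, the fiber $\iota\fiber(w,\gamma)$ should not be expected to possess a single homotopy-maximal object. For the distinguished object $(y_0,\gamma_\sigma)$ built from atoms, the objects of $\bA$ mapping into it are essentially the intersection the paper calls $\bA\cap\bB$, and the paper must work to prove that this intersection is contractible: it is covered by $2(k+1)$ subcategories $\bU^\pm_j$ (one for each way some $f_j$ can be forced redundant), each with its own homotopy-maximal element, and contractibility is extracted from the nerve theorem together with the observation that the vertex $\bU^+_0$ lies in every maximal simplex of the nerve. Switching enters only to build maximal elements of intersections $\bU^+_j\cap\bU^-_{j+1}$, where both redundancies genuinely hold. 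So the paper's actual route is different in kind: it decomposes $\bN\fiber\sigma=\bA\cup\bB$, where $\bB$ is the contractible ``cone'' of objects mapping to $(y_0,\gamma_\sigma)$ that absorbs everything outside $\bA$, and then applies the gluing lemma after the nerve-theorem analysis of $\bA\cap\bB$. To salvage your approach you would need to replace the hoped-for homotopy-maximal object of $\iota\fiber(w,\gamma)$ with an analogous cover-and-nerve argument, at which point the two proofs essentially converge.
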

\begin{proof}
Consider the object $(y_0,\gamma_\sigma)$ in $\bL_\Sigma\fiber z$ given by the zigzag
\[
\xymatrixrowsep{0.4in}
\xymatrixcolsep{0.4in}
\xymatrix{
y_0 \ar@{->}[r]^{a_0=1} & y_0 & x_0 \ar@{->}[l]_{f_0} \ar@{->}[r]^{a_1} & y_1 & \cdots \ar@{->}[l]_{f_1} &  x_k \ar@{->}[l]_{f_k} \ar@{->}[r]^{a_{k+1}} & z
}
\]
where the $a_\bullet:x_{\bullet-1} \to y_\bullet$ are atoms in $\bE(x_{\bullet-1},y_{\bullet})$. Define the full subcategory $\bB$ of $\bL_\Sigma\fiber z$ generated by the following subset of objects: 
\[
\bB_0 = \big{\{}(w,\gamma) \mid \text{there exists some } h:(w,\gamma) \to (y_0,\gamma_\sigma) \text{ in }\bL_\Sigma\fiber z\big{\}}.
\]
It is easy to check that $\bB$ is contractible (since $(y_0,\gamma_\sigma)$ is homotopy-maximal) and that every object of $\bN\fiber \sigma$ not in $\bA$ belongs to $\bB$ (since any $(w,\gamma)$ with essential chain equaling $\sigma$ admits a morphism to $(y_0,\gamma_\sigma)$ by minimality of the $a_\bullet$'s). We therefore have a decomposition
\[
\bN\fiber\sigma = \bA \cup \bB,
\]
where $\bA$ and $\bB$ are contractible full subcategories of $\bN \fiber \sigma$. It now suffices to show that the intersection $\bA \cap \bB$ is contractible\footnote{Consult, for instance, \cite[Ex. 0.23]{hatcher}.}. Any $(w,\gamma)$ in $\bB_0$ must be expressible as the top row of the following diagram, which illustrates a morphism $h$ to $(y_0,\gamma_\sigma)$:
\[
\xymatrixcolsep{0.4in}
\xymatrixrowsep{0.4in}
\xymatrix{
w \ar@{->}[r]^{g_0} \ar@{->}[d]_{h} &  y_0  \ar@{=}[d]^{} & x_0 \ar@{->}[l]_{f_0} \ar@{->}[r]^{g_1} \ar@{=}[d]^{} & \cdots & x_{k} \ar@{->}[r]^{g_{k+1}} \ar@{->}[l]_{f_k} \ar@{=}[d]_{} & z \ar@{=}[d] \\
y_0 \ar@{=}[r]_{} \ar@{=>}[ur]^{}	&  y_0 	& x_0 \ar@{->}[l]^{f_0}  	\ar@{->}[r]_{a_1} \ar@{=}[ul]_{} \ar@{=>}[ur]_{} & \cdots & x_{k} \ar@{->}[l]^{} \ar@{->}[r]_{a_{k+1}} \ar@{=}[ul]^{} \ar@{=>}[ur]^{}& z
}
\]
But if $(w,\gamma)$ is also in $\bA_0$, then it has an essential chain strictly smaller than $\sigma$, so at least one of the $f_j$ must be left or right-redundant in the top row. Thus, $\bA \cap \bB$ admits a cover by $2(k+1)$ full subcategories $\bU^\pm_j$ (where $0 \leq j \leq k$) that may be defined via conditions on the $g_\bullet$'s which force the $f_\bullet$'s to be left or right-redundant. So for each $0 \leq j \leq k$,  
\begin{itemize}
\item $\bU^+_j$ contains zigzags satisfying $h_j\com f_j \twomor g_j$ where $h_j$ is atomic in $\bE(x_{j-1},x_j)$, and 
\item$\bU^-_j$ contains zigzags satisfying $f_j\com\ell_{j+1} \twomor g_{j+1}$ where $\ell_{j+1}$ is atomic in $\bE(y_j,y_{j+1})$. 
\end{itemize}
We allow for the possibility that $\bU^+_\bullet$ or $\bU_\bullet^-$ might be empty if no such $h_\bullet$ or $\ell_\bullet$ exist. If $\bU^+_j$ and $\bU^-_{j+1}$ are both nonempty, then recall that the {\bf switching} axiom of Definition \ref{def:morsys} guarantees the presence of an atom $q_j:y_j \to x_{j+1}$.

Next, we show that all non-empty intersections of the $\bU^\pm_j$ are contractible. Given a subset $\mathbb{U}$ of $\{\bU^\pm_j\}$ whose constituent categories intersect non-trivially, one can readily construct the homotopy-maximal element residing in that intersection by choosing all the appropriate forward-pointing $g_j:x_{j-1} \to y_j$ in the zigzag representative above. In particular, setting
\begin{align*}
g_j = \begin{cases}
							a_j & \text{ if neither } \bU^+_j \text{ nor } \bU^-_{j+1} \text{ is in } \mathbb{U}, \\
							h_j \com f_j & \text{ if } \bU^+_j \text{ is in } \mathbb{U} \text{ but } \bU^-_{j+1} \text{ is not}, \\
							f_j \com \ell_{j+1} & \text{ if } \bU^+_j \text{ is not in } \mathbb{U} \text{ but } \bU^-_{j+1} \text{ is}, \\
							f_j \com q_j \com f_{j+1} & \text{ if both } \bU^+_j \text{ and } \bU^-_{j+1} \text{ are in } \mathbb{U},
					\end{cases}
\end{align*}
creates a zigzag representing the desired homotopy-maximal element in the intersection of all subcategories contained in $\mathbb{U}$. It is easy to check that $\bU^+_0$ is always non-empty (set $w = x_0$ and $g_0 = f_0 = h$ in the zigzag above) and that it always intersects any nonempty subcollection of categories in $\mathbb{U}$ (since one can simply modify $g_0$ independently of the other $g_j$'s). We now appeal to a suitable version of the {\bf nerve theorem}\footnote{The {\em nerve} of a locally finite cover of some CW complex by subcomplexes is that abstract simplicial complex whose $d$-simplices correspond to $(d+1)$-fold nonempty intersections (i.e., what one obtains by applying Definition \ref{def:geonerve} to the intersection lattice). The nerve theorem states that if all such intersections are contractible, then the underlying CW complex is homotopy-equivalent to the nerve.} (see \cite[Thm 15.21 \& Rmk 15.22]{Kozlov08}): the intersection $\bA \cap \bB$ is covered by at most $2(k+1)$ full subcategories, all of whose nonempty intersections are contractible. Moreover, the nerve of this cover contains a distinguished vertex $\bU^+_0$ lying in every maximal simplex. Thus, the nerve of the cover contracts to a single vertex and $\bA \cap \bB$ is consequently contractible. 
\end{proof}

By Propositions \ref{prop:base} and \ref{prop:indstep}, the functor $\bN:\bL_\Sigma \fiber z\to \Sigma_z$ has a contractible fiber $\bN \fiber \sigma$ over each $\sigma \in \Sigma_z$. By Theorem \ref{thm:quillenA}, $\bN$ induces a homotopy-equivalence between classifying spaces $|\Delta(\bL_\Sigma\fiber z)|$ and $|\Delta(\Sigma_z)|$. Since the poset $\Sigma_z$ has a minimal element $(1_z)$, it is contractible, and hence so is the fiber $\bL_\Sigma\fiber z$. Finally, since $z \in \bE_0$ was chosen arbitrarily, the functor $\bL_\Sigma:\bE \to \Loc_\Sigma\bE$ also induces a homotopy-equivalence of classifying spaces by Theorem \ref{thm:quillenA}, thus concluding the proof of Theorem \ref{thm:cellularloc}.  

\section{The discrete flow category} \label{sec:flocat}

As before, we let $\bE$ be a cellular category equipped with a Morse system $\Sigma$ whose binary relation is $\flord$ and ask the reader to consult Definitions \ref{def:cell2cat} and \ref{def:morsys} for details. 

\subsection{The subcategory of critical objects}

An object $m \in \bE_0$ is called {\bf $\Sigma$-critical} if for each $f:x \to y$ in $\Sigma$ at least one of the posets $\bE(x,m)$ and $\bE(m,y)$ is empty. It is easily checked (as those familiar with discrete Morse theory might expect) that both the source $x$ and target $y$ of every $f:x \to y$ in $\Sigma$ are not $\Sigma$-critical. 

For every non-critical $w$ in $\bE_0$ there is in fact a {\em unique} $f:x \to y$ in $\Sigma$ whose source and target satisfy $\bE(x,w) \neq \varnothing \neq \bE(w,y)$. To see why this is the case, note that if two different $f$ and $f'$ satisfied this condition, then we would obtain $f \flord f' \flord f$ and violate the {\bf order} axiom of Definition \ref{def:morsys}. We write $S(f)$ to indicate the set of all objects which are rendered non-critical by a given $f \in \Sigma$, and call this set the {\bf span} of $f$. Clearly, both $x$ and $y$ lie in the span of $f:x \to y$. In the special case where the Morse system $\Sigma$ is induced by an acyclic partial matching, we have  $S(f) = \{x,y\}$, since by the {\bf incidence} requirement of Definition \ref{def:partmatch} there are no intermediate cells $z$ satisfying $x > z > y$.

The following category is built around the critical objects and it plays the role of a Morse complex in our setting.
\begin{definition}
\label{def:flo}
The {\bf discrete flow category} $\Flo_\Sigma\bE$ of $\Sigma$ is the full subcategory of the localization $\Loc_\Sigma\bE$ generated by the set of $\Sigma$-critical objects.
\end{definition}

With an eye towards proving the half of Theorem \ref{thm:main} not addressed by Theorem \ref{thm:cellularloc}, we would like to establish that the inclusion $\bJ: \Flo_\Sigma\bE \to \Loc_\Sigma\bE$, a strict p-functor, induces a homotopy-equivalence of classifying spaces. But the desired equivalence does not hold for arbitrary cellular categories and Morse systems: perhaps the simplest illustrations of this failure are given below.
\begin{example}
\label{ex:pathology}
Here are two (and a half) instances where the localized entrance path category has a different homotopy type from the flow category.
\begin{enumerate}
\item Consider the poset $\bbP$ with four objects $x,y,m$ and $m'$ where the only non-trivial order relations are $x > y$ along with $x > m$ and $x > m'$. One can easily verify that ${\bbP}$ is cellular (when treated as a p-category with trivial poset structures on its morphism-sets) and has a contractible classifying space, since $x$ is a maximal element. Impose the singleton Morse system $\{(x > y)\}$ on $\bbP$, and note that the associated discrete flow category is not contractible --- it has the homotopy type of two points since it consists of $m$ and $m'$ with no morphisms between them.
\item Our second example involves the acyclic partial matching depicted on the semi-infinite collection of cubes below:
\begin{center}
\includegraphics[scale=0.2]{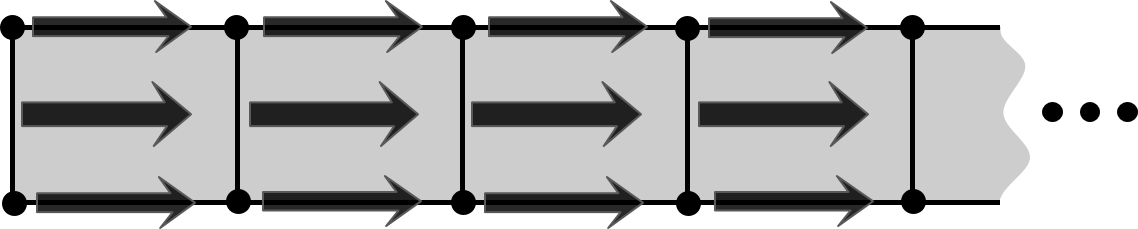}
\label{fig:infmatch}
\end{center}
Although this cube complex is contractible, the flow category associated to the overlaid acyclic matching is empty since there are no critical cells whatsoever. If the vertices are matched with edges to their left rather than the right, like so:
\begin{center}
\includegraphics[scale=0.2]{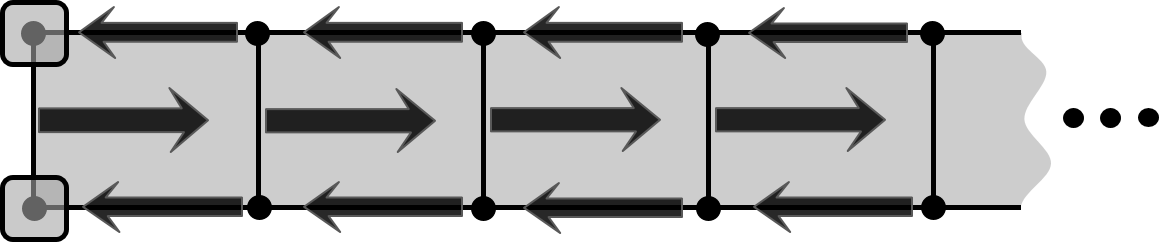}
\label{fig:infmatch2}
\end{center}
then once again we have a flow category with the incorrect homotopy type --- on this occasion, there are two objects (generated by the two boxed critical vertices on the left edge of the first cube) and no morphisms between them.
\end{enumerate}
\end{example}

A more immediate difficulty encountered here is that studying the fibers $\bJ\fiber w$ of $\bJ$ over objects $w$ of $\Loc_\Sigma\bE$ is likely to be fruitless: a brief examination of the acyclic partial matching from Figure \ref{fig:match} confirms the existence of cells (such as the 2-simplex $wxy$) which do not admit any morphisms from critical cells in the localized entrance path category, and hence have empty over-fibers. On the other hand, it appears as though each cell does admit morphisms {\em to} some critical cell (as opposed to the second example above). We therefore explore fibers dual to the ones described in Definition \ref{def:fiber2cat}.

\begin{definition}
\label{def:underfib}
Let $w$ be any object of $\bE$ (or equivalently, of $\Loc_\Sigma\bE$) and recall the inclusion functor $\bJ:\Flo_\Sigma\bE \to \Loc_\Sigma\bE$. The {\bf fiber of $\bJ$ under $w$} is the p-category $w \fiber \bJ$ whose
\begin{enumerate}
\item objects are pairs $(m,\gamma)$ where $m$ is $\Sigma$-critical and $\gamma \in \Loc_\Sigma\bE(w,m)$,
\item morphisms from $(m,\gamma)$ to $(m',\gamma')$ are given by $\rho \in \Loc_\Sigma\bE(m,m')$ satisfying
\[
\xymatrixcolsep{0.25in}
\xymatrixrowsep{0.35in}
\xymatrix{
 & w \ar@{->}[dl]_{\gamma} \ar@{->}[dr]^{\gamma'} & \\ 
m \ar@{->}[rr]^*+<1.2em>{{\Longleftarrow}}_{\rho} & & m' \\
}
\]
\item and the poset structure is inherited from $\Loc_\Sigma\bE(m,m')$. That is, $\rho \twomor \rho'$ holds in the fiber iff it holds in $\Loc_\Sigma\bE$.
\end{enumerate}
\end{definition}
The rules for composing morphisms are also inherited from $\Loc_\Sigma\bE$ in a straightforward manner dual to those from Definition \ref{def:fiber2cat}. The conclusion of Quillen's result (see Theorem \ref{thm:quillenA} above) also holds when all under-fibers (rather than over-fibers) are contractible, so it suffices to impose conditions which guarantee that $w \fiber \bJ$ is contractible for each object $w$ of $\bE$.  The next result shows that there is no difficulty at least when dealing with fibers under critical objects. 
\begin{proposition}
\label{prop:critfiber}
The fiber $m \fiber \bJ$ of $\bJ$ under $m$ is contractible whenever $m$ is $\Sigma$-critical.
\end{proposition}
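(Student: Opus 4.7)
The plan is to contract the fiber $m \fiber \bJ$ onto the object $(m, 1_m)$, which lies in the fiber precisely because $m$ is $\Sigma$-critical and hence $1_m$ belongs to $\Loc_\Sigma\bE(m, m)$. The key observation is that for every object $(m', \gamma)$ of $m \fiber \bJ$, the morphism $\gamma \in \Loc_\Sigma\bE(m, m')$ itself represents a fiber morphism $(m, 1_m) \to (m', \gamma)$, since the relation $1_m \com \gamma \twomor \gamma$ holds trivially. Moreover, the defining condition on fiber morphisms forces $\rho \twomor \gamma$ for any $\rho:(m, 1_m) \to (m', \gamma)$, so $\gamma$ is the \emph{maximum} element of this hom-poset.

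Building on this, I would introduce the constant p-functor $\bG: m \fiber \bJ \to m \fiber \bJ$ sending every object to $(m, 1_m)$ and every morphism to $1_{(m, 1_m)}$; it evidently factors through the trivial p-category $\bullet$. The next step is to construct a lax natural transformation $\nu: \bG \twomor \bI_{m \fiber \bJ}$ with components $\nu_{(m', \gamma)} := \gamma$. The lax naturality square associated to a fiber morphism $\rho:(m', \gamma) \to (m'', \gamma')$ has top composite $\gamma \com \rho$ and bottom composite $1_{(m, 1_m)} \com \gamma' = \gamma'$, so the required 2-cell $\gamma \com \rho \twomor \gamma'$ is exactly the defining condition on $\rho$ and holds automatically.

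To conclude, I would invoke the lax dual of Theorem \ref{thm:nattranshomeq} --- the tool foreshadowed by the footnote in the proof of Lemma \ref{lem:qinit} for handling homotopy-minimal objects --- to deduce that $\bG$ and $\bI_{m \fiber \bJ}$ induce homotopic maps on classifying spaces. Because $\bG$ factors through a point, $|\Delta(m \fiber \bJ)|$ must be contractible, which is the desired conclusion.

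The main obstacle is only notational: the paper formulates Theorem \ref{thm:nattranshomeq} and Lemma \ref{lem:qinit} in their oplax / homotopy-maximal forms, whereas here $\gamma$ is maximal in the hom-poset \emph{out of} $(m, 1_m)$ rather than \emph{into} it, so the stated versions do not apply verbatim. This is resolved either by passing to the opposite p-category and citing Lemma \ref{lem:qinit} directly (using that classifying spaces are invariant under opposites), or by writing down the straightforward lax dualization in place.
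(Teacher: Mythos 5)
Your proof is essentially the paper's: both contract $m \fiber \bJ$ onto $(m,1_m)$ using the tautological cone whose component at an object $(m',\gamma)$ is $\gamma$ itself, then quote the appropriate one-object collapse lemma. The one discrepancy is the direction of the 2-cell in Definition \ref{def:underfib}: the paper's own proof reads the condition on a fiber morphism $\rho:(m,1_m)\to(m',\gamma)$ as $\gamma \twomor \rho$, so that $\gamma$ is the \emph{minimum} of that hom-poset and $(m,1_m)$ is homotopy-minimal in the sense of Definition \ref{def:homaxmin}, whence Lemma \ref{lem:qinit} applies with no dualization; you read it as $\rho \twomor \gamma$, which makes $\gamma$ a maximum on hom-posets \emph{out of} the cone object --- the one configuration not covered by Definition \ref{def:homaxmin} --- and forces your detour through a lax dual of Theorem \ref{thm:nattranshomeq} or the opposite category. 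That detour is legitimate (lax transformations also induce homotopies of geometric nerves, and nerves are invariant under taking opposites), so your argument is sound under your reading of the admittedly ambiguous diagram, but under the paper's intended reading the conclusion follows even more directly.
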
 
\begin{proof}
Note that the object $(m,1_m)$ in $m \fiber \bJ$ is homotopy-minimal, since for any other object $(m',\gamma)$ of $m \fiber \bJ$ the collection of admissible $\rho:m \to m'$ from $\Loc_\Sigma\bE$ in the diagram below
\[
\xymatrixcolsep{0.25in}
\xymatrixrowsep{0.35in}
\xymatrix{
 & m \ar@{->}[dl]_{1_m} \ar@{->}[dr]^{\gamma} & \\ 
m \ar@{->}[rr]^*+<1.2em>{{\Longleftarrow}}_{\rho} & & m' \\
}
\]
contains $\gamma$ as a minimal element. An appeal to Lemma \ref{lem:qinit} concludes the argument.
\end{proof}

\subsection{Mild Morse systems}

Our next goal is to impose additional hypotheses on $\Sigma$ (beyond the requirements of Definition \ref{def:morsys}) which preclude the undesirable phenomena described in Example \ref{ex:pathology} and hence guarantee that fibers of $\bJ$ under non-critical objects are also contractible.
\begin{definition}
\label{def:mild}
The Morse system $\Sigma$ on the cellular category $\bE$ is called {\bf mild} if for each $f:x \to y$ in $\Sigma$ the following two conditions hold. 
\begin{enumerate}
\item Every strictly descending $\Sigma$-chain of the form 
\[
f \flord f_0 \flord f_1 \flord f_2 \flord \cdots
\]
eventually terminates at some locally minimal $f_k$. Here, local minimality means that if $f' \in \Sigma$ satisfies $f_k \flord f'$, then $f' = f_k$.
\item The full subcategory $\bE|f$ of $\bE$ generated by all objects $z \in \bE_0$ simultaneously satisfying $\bE(x,z) \neq \varnothing$ and $\bE(z,y) = \varnothing$ is:
\begin{itemize} 
\item (finite): it has finitely many objects $z$, 
\item (loopfree): if $\bE(z,z') \neq \varnothing$ then $\bE(z',z) = \varnothing$ for $z \neq z'$ in $(\bE|f)_0$, and
\item (contractible): the classifying space $|\Delta(\bE|f)|$ is contractible.
\end{itemize}
\end{enumerate}
\end{definition}
The partial matching from Example \ref{ex:pathology}(1) violates the second mildness condition above --- writing $f$ for the matched entrance path $(x > y)$ in the poset $\bbP$, note that the category ${\bbP}|f$ is not contractible since it consists of two disconnected objects $m$ and $m'$. Similarly, both acyclic partial matchings depicted in Example \ref{ex:pathology}(2) violate the first mildness condition due to the infinite descending chain which consists of all the 2-dimensional cubes paired with their left edges. The following result confirms that mildness is not an unreasonable constraint in the familiar case of acyclic partial matchings on finite regular CW complexes.

\begin{proposition}
\label{prop:finmild}
Let $\X$ be a regular CW complex equipped with an acyclic partial matching $\mu$. If $\X$ is finite, then, the Morse system $\Sigma_\mu$ on $\En{\X}$ which consists of all entrance paths $\{(\mu(y_\bullet) > y_\bullet)\}$ is mild.
\end{proposition}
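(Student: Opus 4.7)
My plan is to verify the two conditions of Definition \ref{def:mild} for $\Sigma_\mu$ in turn. The first condition follows immediately from finiteness of $\X$: since $\mu$ produces only finitely many matched pairs, $\Sigma_\mu$ is a finite set, so every strictly descending chain must terminate, and its terminal element is locally minimal by definition.

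For the second condition, fix $f = (x > y) \in \Sigma_\mu$ with $\mu(y) = x$ (so $y \lhd x$). I would first characterize the objects of $\En{\X}|f$ explicitly. The conditions $\En{\X}(x,z) \neq \varnothing$ and $\En{\X}(z,y) = \varnothing$ translate to $z \leq x$ and $z \not\geq y$ respectively; since $y$ has codimension one in $x$, no cell lies strictly between them, so the conditions collapse to $z < x$ and $z \neq y$. Thus $(\En{\X}|f)_0$ is precisely the set of proper faces of $x$ other than $y$. Finiteness of this set is inherited from $\X$, and loop-freeness is automatic because entrance paths require strict face containment.

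The main technical task is contractibility of $|\Delta(\En{\X}|f)|$. My plan is to first adapt the argument of Proposition \ref{prop:entpathhom} to the restricted setting: letting $P$ denote the full subposet of $\Fc{\X}$ on the same object set, the lax p-functor $\En{\X}|f \to P$ that collapses each entrance path to its minimal representative, together with the inclusion $P \hookrightarrow \En{\X}|f$, satisfies the hypotheses of Theorem \ref{thm:nattranshomeq}, so $|\Delta(\En{\X}|f)| \simeq |\Delta P|$. Now $|\Delta P|$ is the subcomplex of $\Sd(\bar x)$ obtained by deleting the two vertices $x$ and $y$ (along with every simplex containing either). Because $\X$ is regular, $\bar x$ is a closed $n$-ball and $\partial \bar x$ is an $(n-1)$-sphere, where $n = \dim x$. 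Deleting the vertex $x$ from $\Sd(\bar x)$ leaves $\Sd(\partial \bar x)$, a triangulation of $S^{n-1}$; further deleting the vertex $y$ yields the antistar of $y$ in this triangulation.

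The principal obstacle is the final PL-topological identification. Since $y$ is top-dimensional in $\partial \bar x$, every chain of cells in $\Fc{\partial \bar x}$ containing $y$ has $y$ as its maximum, so the link of $y$ in $\Sd(\partial \bar x)$ coincides with $\Sd(\partial y)$, which is a PL $(n-2)$-sphere by regularity of $y$. Hence the closed star of $y$ is a closed $(n-1)$-ball, and the antistar --- its complement in $S^{n-1}$ --- is likewise a closed $(n-1)$-ball, hence contractible. This establishes contractibility and completes the verification of mildness.
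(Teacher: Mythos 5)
Your proof is correct and follows essentially the same route as the paper: both handle the chain condition by finiteness, identify $(\En{\X}|f)_0$ with the proper faces of $x$ other than $y$, pass to the underlying subcomplex via the argument of Proposition \ref{prop:entpathhom}, and conclude by observing that this subcomplex is $\partial\bar{x}\cong S^{n-1}$ with the open top cell $y$ removed, hence contractible. Your extra PL detail (link, star, antistar of $y$) merely makes explicit what the paper asserts in one line.
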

\begin{proof}
The first mildness condition from Definition \ref{def:mild} is implied trivially by the {\bf exhaustion} and {\bf order} axioms of Definition \ref{def:morsys} and the fact that there are only finitely many cells. Turning to the second mildness condition, let $f = (x > y)$ be any element of $\Sigma_\mu$. Now, $\En{\X}|f$ is the full subcategory of $\En{\X}$ generated by all the faces of $x$ except $y$. Its finiteness and loopfreeness are straightforward to confirm, so we focus here on proving contractibility of the classifying space. Note that $\En{\X}|f$ is nonempty because by regularity $x$ must have at least one face different from $y$. Moreover, the subcomplex generated by these non-$y$ faces of $x$ is homeomorphic to (some finite regular cellulation of) a sphere of dimension $(\dim x - 1)$ which is missing the interior of a top-dimensional cell $y$. Thus, this subcomplex --- and hence, by Proposition \ref{prop:entpathhom}, its entrance path category $\En{\X}|f$ --- is contractible, as desired.
\end{proof}

\subsection{Contractibility of the under-fibers}

This section will conclude with a proof of the following result, which (in conjunction with Theorem \ref{thm:cellularloc}) establishes Theorem \ref{thm:main}.

\begin{theorem}
\label{thm:floloc}
If $\Sigma$ is a mild Morse system on the cellular category $\bE$, then the inclusion $\bJ:\Flo_\Sigma\bE \to \Loc_\Sigma\bE$ induces a homotopy-equivalence of classifying spaces.
\end{theorem}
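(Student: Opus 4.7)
By Theorem \ref{thm:quillenA} in its under-fiber formulation, it suffices to show that the fiber $w \fiber \bJ$ has contractible classifying space for every $w \in \bE_0$. Proposition \ref{prop:critfiber} handles the case where $w$ is $\Sigma$-critical, so I would focus on the non-critical case. For such $w$ there is, by the discussion preceding Definition \ref{def:flo}, a unique $f : x \to y$ in $\Sigma$ with $w \in S(f)$. My plan is to proceed by Noetherian induction on the partial order $\flord$: well-foundedness of descending chains is exactly the first mildness condition from Definition \ref{def:mild}, so the induction has a genuine base case consisting of objects in the span of a locally minimal $f$. The inductive hypothesis asserts that $w' \fiber \bJ$ is contractible whenever $w'$ is critical or lies in $S(f')$ for some $f'$ with $f \flord f'$ and $f' \neq f$.

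For the inductive step at $w \in S(f)$, the key combinatorial observation is that any $\Sigma$-zigzag representing a morphism $w \to m$ in $\Loc_\Sigma \bE$ (for critical $m$) opens with $w \to y_0 \leftarrow x_0 \to \cdots \to m$ for some $f_0 : x_0 \to y_0$ in $\Sigma$. I would cover $w \fiber \bJ$ by two full subcategories $\bU^{=}$ and $\bU^{\neq}$, where objects of $\bU^{=}$ admit a representative zigzag with $f_0 = f$ (the \emph{canonical opener} $w \to y \leftarrow x$, available precisely because $w \in S(f)$) and objects of $\bU^{\neq}$ admit one with $f_0 \neq f$. Every object of $w \fiber \bJ$ belongs to $\bU^{=}$ via the atoms of $\bE(w,y)$ and the morphism $f$, so these subcategories genuinely cover the fiber.

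For $\bU^{\neq}$, the \textbf{order} axiom of Definition \ref{def:morsys} forces $f \flord f_0$ strictly, so $x_0 \in S(f_0)$ lies within reach of the inductive hypothesis. The second mildness condition --- finiteness, loopfreeness, and contractibility of $\bE|f$ --- is the geometric glue: I would construct a cover of $\bU^{\neq}$ indexed by $\bE|f$ whose pieces each admit a homotopy-equivalence to some $z \fiber \bJ$ with $z \in (\bE|f)_0$, and then invoke the nerve theorem as in the proof of Proposition \ref{prop:indstep}. The subcategory $\bU^{=}$ reduces, by stripping the fixed prefix $w \to y \leftarrow x$, to the analysis of zigzags out of $x$; the cancellation rule of Remark \ref{rmk:reductions} forces the next $\Sigma$-step in such a zigzag to differ from $f$, sending this case into the same inductive argument applied to $\bU^{\neq}$. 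The intersection $\bU^= \cap \bU^{\neq}$ is handled by combining both decompositions, and a two-piece nerve-theorem argument in the style of Proposition \ref{prop:indstep} then delivers contractibility of $w \fiber \bJ$.

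The principal difficulty is rigidifying the $\bE|f$-indexed cover: for each pair of zigzag representatives that differ by the horizontal and vertical moves of Remark \ref{rmk:reductions}, one must coherently assign a stratum of the cover. This is the dual problem to the one solved by Proposition \ref{prop:esswelldef}, and I expect the argument to again rest crucially on the \textbf{lifting} and \textbf{switching} axioms of Definition \ref{def:morsys} --- lifting rectifies reductions that could otherwise break the assignment, while switching supplies the atoms needed to cross the boundary between successive $\Sigma$-steps. Once this coherence is in place the nerve-theorem mechanics proceed routinely, and Theorem \ref{thm:quillenA} concludes the proof.
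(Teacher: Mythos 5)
Your overall architecture is exactly the paper's: pass to under-fibers and invoke the under-fiber form of Theorem \ref{thm:quillenA}, dispose of critical $w$ via Proposition \ref{prop:critfiber}, run a Noetherian induction on $\flord$ whose well-foundedness is the first mildness condition, and use the contractibility of $\bE|f$ together with a nerve-theorem argument for the inductive step. The base case (locally minimal $f$) and the placement of the two mildness hypotheses are also where the paper puts them.

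The genuine gap is in your inductive step, specifically the $\bU^{=}/\bU^{\neq}$ decomposition. Your assertion that every object of $w\fiber\bJ$ admits a representative zigzag opening with the canonical $w\to y\gets x$ is unjustified and, for general $w\in S(f)$, false: to introduce a backward $f$-arrow at the front one needs either a vertical move whose downward map lands in $y$ and lies in $\Sigma^{+}$ (by \textbf{exhaustion} this can only be $f$ itself or an identity, so it is unavailable unless $w=x$), or the reverse of the cancellation in Remark \ref{rmk:reductions}, which requires the first right-pointing map to factor through $f$ and hence requires a morphism $w\to x$ --- and $\bE(w,x)$ need not be nonempty. As written the claim also makes $\bU^{\neq}$ redundant, so the two-piece cover is internally inconsistent, and you never actually establish contractibility of $\bU^{=}$ or of $\bU^{=}\cap\bU^{\neq}$. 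The paper sidesteps all of this: it first shows $w\fiber\bJ\simeq\bE(w,y)\times(x\fiber\bJ)$, so only the fiber under the \emph{source} $x$ of $f$ needs analysis; there one may assume the first backward arrow differs from $f$, and $x\fiber\bJ$ is covered by the single family $\bV[z]=\phi_z\bigl(\bE(x,z)\times(z\fiber\bJ)\bigr)$ indexed by $z\in(\bE|f)_0$. The ``rigidification'' you flag as the principal difficulty is resolved there not by a dual of Proposition \ref{prop:esswelldef} but by defining the cover as images of composition maps and characterizing nonempty intersections via chains $z_0\to\cdots\to z_\ell$ in $\bE|f$ (loopfreeness orders them uniquely), each intersection being a product of atom-containing posets with the contractible $z_\ell\fiber\bJ$; the nerve of the cover is then homotopy-equivalent to $\Delta(\bE|f)$ and hence contractible. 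Without that product reduction and the explicit description of the intersections, your sketch does not yet close.
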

Note from Proposition \ref{prop:critfiber} that fibers of $\bJ$ under critical cells are contractible regardless of mildness, so we focus on fibers under those objects which lie in the span of some morphism in $\Sigma$.  We proceed by induction over the partial order $\flord$, starting with its locally minimal elements and working our way upwards. 
\begin{proposition}
\label{prop:baseunder}
If $\Sigma$ is mild and if $f:x \to y$ is locally minimal, then the fiber $w \fiber \bJ$ has a contractible classifying space for every $w$ in the span $S(f)$.
\end{proposition}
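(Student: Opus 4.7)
The plan is to exhibit a pair of p-functors between $w \fiber \bJ$ and the category $\bE|f$ (which is contractible by mildness), and to conclude via Theorem~\ref{thm:nattranshomeq}. First I would use local minimality to pin down the shape of morphisms into the fiber: given any $\gamma : w \to m$ in $\Loc_\Sigma\bE$ with $m$ critical, its essential chain (as in Section~\ref{sec:lochom}) contains at most one element. Indeed, writing the chain as $(f_0 \flord \cdots \flord f_k \flord 1_m)$, the right-pointing map $w \to y_0$ in a reduced zigzag combined with $\bE(x, w) \neq \varnothing$ (from $w \in S(f)$) forces $\bE(x, y_0) \neq \varnothing$; the order axiom then gives $f \flord f_0$ and local minimality forces $f_0 = f$. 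The same reasoning applied to $f \flord f_1$ collapses to $f = f_1$, contradicting the strictly descending requirement unless $k = 0$.

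Next I would note that any critical $m$ admitting such a $\gamma$ necessarily lies in $(\bE|f)_0$: a representative zigzag contributes a morphism $x \to m$ in $\bE$, while criticality of $m$ combined with $\bE(x, m) \neq \varnothing$ forces $\bE(m, y) = \varnothing$. I would then introduce the canonical morphism $\gamma_m \in \Loc_\Sigma\bE(w, m)$ represented by $w \xrightarrow{b} y \xleftarrow{f} x \xrightarrow{a_m} m$, where $b$ and $a_m$ are the atoms of $\bE(w, y)$ and $\bE(x, m)$ respectively (note that when $w = x$, the standing reduction of Remark~\ref{rmk:reductions} collapses $\gamma_m$ to the direct morphism $\bL_\Sigma(a_m)$). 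Define a pair of strict p-functors: $\bI : \bE|f \to w \fiber \bJ$ by $\bI(m) = (m, \gamma_m)$ and $\bI(\phi) = \bL_\Sigma(\phi)$, and $\bM : w \fiber \bJ \to \bE|f$ by $\bM(m, \gamma) = m$. The analogous essential-chain argument applied to $\rho \in \Loc_\Sigma\bE(m, m')$ for critical $m, m' \in \bE|f$ (using that $\bE(m, y) = \varnothing$ precludes any $f' \in \Sigma$ from heading the chain) shows that such $\rho$ are just direct $\bE$-morphisms, so that $\bM$ is well-defined on morphisms. The triangle condition for $\bI(\phi)$ reduces to $\gamma_{m'} \twomor \gamma_m \com \bL_\Sigma(\phi)$, which holds by atomicity of $a_{m'}$ and order-preservation of composition in $\Loc_\Sigma\bE$.

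The crux of the argument is the minimality property $\gamma_m \twomor \gamma$ in $\Loc_\Sigma\bE(w, m)$ for every $\gamma: w \to m$. When $\gamma$ has essential chain $(f)$ and is represented by $b' \com f^{-1} \com a'$, comparison with $\gamma_m$ via trivial vertical morphisms in the partial-order diagram of Definition~\ref{def:loc2cat}(3) reduces to the atomicity relations $b \twomor b'$ and $a_m \twomor a'$ in $\bE$. When $\gamma$ is represented by a direct morphism $\phi: w \to m$ in $\bE$, the comparison is more delicate: the idea is to exhibit an alternative via-$f$ representative of $\phi$ by using the atomic $c \in \bE(x, w)$ to form the zigzag $w \xrightarrow{b} y \xleftarrow{f} x \xrightarrow{c \com \phi} m$ and reducing back to $\phi$ through a vertical reduction that uses $f$ itself as the non-identity vertical morphism, after which the atomicity relation $a_m \twomor c \com \phi$ finishes the comparison with $\gamma_m$. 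This is the main obstacle, and it is cleanest in the case $w = y$ (where the reduction is immediate because $b$ is the identity), and in the case $w = x$ (where via-$f$ and direct morphisms coincide); in the entrance path categories of regular CW complexes one always has $S(f) = \{x, y\}$, so these are in fact the only cases needed for Theorem~\ref{thm:main}, while the general-$w$ case is handled by iterating this reduction with the help of the mildness hypothesis.

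Granted the minimality, the assignment $\eta_{(m, \gamma)} = 1_m$ defines an oplax natural transformation $\mathbf{Id}_{w \fiber \bJ} \twomor \bI \com \bM$: its triangle condition is precisely $\gamma_m \twomor \gamma$, and naturality is automatic because $\bI \com \bM$ acts as the identity on every direct morphism between critical cells. Since $\bM \com \bI = \mathbf{Id}_{\bE|f}$ already, Theorem~\ref{thm:nattranshomeq} produces a homotopy equivalence $|\Delta(w \fiber \bJ)| \simeq |\Delta(\bE|f)|$, and mildness furnishes the contractibility of the right-hand side.
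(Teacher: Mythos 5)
Your overall blueprint agrees with the paper's: both proofs compare $w \fiber \bJ$ with the contractible category $\bE|f$, and your identification of the objects and morphisms of $w\fiber\bJ$ (every critical $m$ receiving a morphism lies in $(\bE|f)_0$, every object of $\bE|f$ is critical so morphisms between them are plain $\bE$-morphisms, and local minimality caps the essential chain at a single copy of $f$) matches the paper's reasoning. The methodological divergence is that the paper applies Theorem~\ref{thm:quillenA} to the projection $\bG:w\fiber\bJ\to\bE|f$ and only has to exhibit a homotopy-maximal object \emph{inside each fiber} $\bG\fiber m$, whereas you build an explicit section $\bI$ and invoke Theorem~\ref{thm:nattranshomeq}, which forces you to prove the much stronger statement that your $\gamma_m$ is a global minimum of $\Loc_\Sigma\bE(w,m)$.

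That global minimality is where your argument has a genuine gap, and you have located it yourself ("this is the main obstacle"). Your $\gamma_m$ is always the via-$f$ zigzag $w\xrightarrow{b}y\xleftarrow{f}x\xrightarrow{a_m}m$, so you must compare it with an arbitrary \emph{direct} morphism $\phi\in\bE(w,m)$ whenever $\bE(w,m)\neq\varnothing$. The mechanism you propose --- a vertical reduction "that uses $f$ itself as the non-identity vertical morphism" --- requires $f:x\to y$ to appear as a vertical arrow between an object of a representative of $\phi$ and an object of a representative of $\gamma_m$; since the verticals in Definition~\ref{def:loc2cat} must lie in $\Sigma^+$ and must have sources on the top row, this forces the representative of $\phi$ to pass through $x$, i.e.\ it needs a morphism $w\to x$ in $\bE$, which exists only when $w=x$. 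Your claim that the case $w=y$ is "immediate because $b$ is the identity" does not repair this: Remark~\ref{rmk:reductions} cancels the backward $f$ only if the right-pointing map into $y$ factors through $f$, and $1_y$ does not factor through $f:x\to y$ unless $\bE(y,x)\neq\varnothing$. Finally, deferring the general $w\in S(f)$ to "iterating this reduction with the help of the mildness hypothesis" is not an argument, and restricting to $S(f)=\{x,y\}$ does not prove the proposition as stated, which concerns arbitrary cellular categories where spans can be larger. Note how the paper partially sidesteps this comparison by defining $\gamma_m$ to be the \emph{direct atom} of $\bE(w,m)$ whenever that poset is nonempty (falling back to the via-$f$ zigzag only otherwise), and by needing maximality only fiberwise; if you want to keep your deformation-retraction strategy, you should at minimum adopt that case split for $\gamma_m$ and then supply an actual proof of the order relation between via-$f$ zigzags and direct morphisms, which is the one comparison neither sketch makes routine.
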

\begin{proof}
We will construct a strict p-functor $\bG:w\fiber\bJ \to \bE|f$ with contractible fibers and use the contractibility of its target category (implied by the second mildness assumption from Definition \ref{def:mild}). To this end, we provisionally define $\bG$ as follows on an object $(m,\gamma)$ and a morphism $\rho:(m,\gamma) \to (m',\gamma')$  of $w \fiber \bJ$: 
\begin{itemize}
\item $\bG(m,\gamma) = m$, and 
\item $\bG(\rho) = \rho:m \to m'$. 
\end{itemize}
We now check that $m$ and $\rho$ must comprise an object and morphism in $\bE|f$ respectively. The existence of $\gamma:w \to m$ in $\Loc_\Sigma\bE$ implies that there is a $\Sigma$-zigzag from $w$ to $m$, say
\[
\xymatrixrowsep{0.3in}
\xymatrixcolsep{0.3in}
\xymatrix{
w \ar@{->}[r]^{} & y_0 & x_0 \ar@{->}[l]_{} \ar@{->}[r]^{} & y_1 & \cdots \ar@{->}[l]_{} &  x_k \ar@{->}[l]_{} \ar@{->}[r]^{} & m.
}
\]
Since $f$ is locally minimal, all left-pointing $y_\bullet \gets x_\bullet$ above must equal $f$. Therefore, $\bE(x,m) \neq \varnothing$ and so $m$ lies in $\bE|f$ (note that $\bE(m,y) = \varnothing$ is guaranteed by the criticality of $m$). Turning to $\rho$, note that every object of $\bE|f$ is $\Sigma$-critical --- otherwise, there exists some $f': x' \to y'$ different from $f$ with $\bE(x,y') \neq \varnothing$ and hence $f \flord f'$, which violates the local minimality of $f$. Thus, there are no non-trivial $\Sigma$-zigzags between $m$ and $m'$, which means $\Loc_\Sigma\bE(m,m') = \bE(m,m')$. Thus, $\rho$ lies in $\bE|f(m,m')$ as desired. Now, it is easily checked that the fiber $\bG\fiber m$ over any $m$ in $\bE|f$ has a homotopy-maximal element $(m,\gamma_m)$ given by
\[
\gamma_m = \begin{cases}
														w \to m & \text{ if } \bE(w,m) \text{ is non-empty},\\
														w \to y \gets x \to m & \text{ otherwise,}															
												\end{cases}
\]
where all arrows in sight indicate atoms in the appropriate posets (which must exist by Definition \ref{def:cell2cat}). By Lemma \ref{lem:qinit}, the fiber $\bG\fiber m$ is contractible; thus, by Theorem \ref{thm:quillenA} we have a homotopy-equivalence between the classifying spaces of $w \fiber \bJ$ and the contractible category $\bE|f$, as desired.
\end{proof}
And finally, we have the inductive step which completes our proof of Theorem \ref{thm:floloc}.
\begin{proposition}
\label{prop:floind}
Assume $\Sigma$ is mild and contains $f:x \to y$. If the fibers $z \fiber \bJ$ under all objects $z$ in $\bE|f$ are contractible, then so is the fiber $w \fiber \bJ$ for each $w$ in the span $S(f)$.
\end{proposition}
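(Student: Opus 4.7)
The plan is to apply Quillen's Theorem A for p-categories (Theorem \ref{thm:quillenA}) to a strict p-functor $\bG\colon w \fiber \bJ \to \bE|f$, reducing contractibility of $w \fiber \bJ$ to that of $\bE|f$ (supplied by the second mildness condition of Definition \ref{def:mild}) together with contractibility of the fibers of $\bG$ (supplied by the inductive hypothesis). The main structural observation is that $w \in S(f)$ forces a very rigid shape on any zigzag leaving $w$: for every reduced $\Sigma$-zigzag $w \to y_0 \gets x_0 \to y_1 \gets \cdots \to m$ representing a morphism $\gamma\colon w \to m$ in $\Loc_\Sigma\bE$, composition of $\bE(x,w) \neq \varnothing$ with $g_0\colon w \to y_0$ gives $\bE(x,y_0) \neq \varnothing$, so the backward arrows form a chain $f \flord f_0 \flord \cdots \flord f_k$.

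The key consequence of the exhaustion axiom is then: whenever $f_i \neq f$, the target $y_i$ cannot lie in $S(f)$, while $\bE(x,y_i) \neq \varnothing$ (from $f \flord f_i$) forces $\bE(y_i,y) = \varnothing$, so $y_i \in \bE|f$. Accordingly, I would set $\bG(m,\gamma) = y_{i^*}$ where $i^*$ is the earliest index with $f_{i^*} \neq f$. In the degenerate case where every $f_i$ equals $f$, horizontal reductions (Remark \ref{rmk:reductions}) collapse the zigzag to the three-step form $w \to y \gets x \to m$, and criticality of $m$ combined with $\bE(x,m) \neq \varnothing$ rules out $m \in S(f)$, forcing $\bE(m,y) = \varnothing$ and hence $m \in \bE|f$; here I would set $\bG(m,\gamma) = m$.

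Well-definedness of $\bG$ on objects (independence from the chosen zigzag representative), together with its extension to morphisms of $w \fiber \bJ$, is the most delicate part. I would proceed in close analogy with Proposition \ref{prop:esswelldef}: enumerate the three possible configurations of backward-pointing columns in the commuting rectangle witnessing a vertical reduction, and in each case use the lifting axiom to show that the first $\bE|f$-entry is either preserved or canonically transported. For the fiber analysis, given $z \in \bE|f$, an object $(m,\gamma)$ of $\bG \fiber z$ can be truncated at its first $\bE|f$-entry $z$, whose suffix $z \to m$ represents a morphism in $\Loc_\Sigma\bE$ and hence an element $(m,\beta)$ of $z \fiber \bJ$. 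Conversely, any $(m,\beta) \in z \fiber \bJ$ can be prepended by the canonical morphism $w \to z$ obtained from the atom $a\colon w \to y$, the formal inverse $f^{-1}\colon y \to x$, and the atom $x \to z$ in $\bE|f$, producing a homotopy-maximal object of $\bG \fiber z$ mapping to $(m,\beta)$. Contractibility of $\bG \fiber z$ then follows from the inductive hypothesis that $z \fiber \bJ$ is contractible.

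The hard step is Step 2, the verification that $\bG$ is well-defined as a p-functor: the case analysis on zigzag reductions must be carried out carefully, and one must also confirm that $\bG$ respects the poset structure on morphism sets of $w \fiber \bJ$ (which are inherited from $\Loc_\Sigma\bE$). Once this is in place, Theorem \ref{thm:quillenA} yields the homotopy equivalence $|\Delta(w \fiber \bJ)| \sim |\Delta(\bE|f)|$, the right-hand side is contractible by mildness, and the inductive step is complete.
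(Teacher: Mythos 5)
Your overall strategy (Theorem A applied to a p-functor $\bG\colon w\fiber\bJ \to \bE|f$, with contractible fibers supplied by the inductive hypothesis) is a reasonable-looking generalization of the base case, but it founders on exactly the point you flag as ``the most delicate part'': the assignment $\bG(m,\gamma) = y_{i^*}$ is \emph{not} well-defined on objects of $w\fiber\bJ$, and no case analysis in the style of Proposition \ref{prop:esswelldef} will rescue it. Concretely, take $w = x$ and a zigzag $x \stackrel{h\com f_0}{\longrightarrow} y_0 \stackrel{f_0}{\gets} x_0 \stackrel{g_1}{\longrightarrow} m$ with $f_0 \neq f$ left-redundant. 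Your rule gives $\bG(m,\gamma) = y_0$. But by Remark \ref{rmk:reductions} this zigzag represents the same morphism as $x \stackrel{h\com g_1}{\longrightarrow} m$, which falls into your degenerate case and yields $\bG(m,\gamma) = m$; since $y_0$ is non-critical and $m$ is critical, these are distinct objects of $\bE|f$. The lifting axiom works against you here: it is precisely the mechanism that makes backward arrows redundant and lets them disappear from a representative, so the ``first $\bE|f$-entry'' is an artifact of the chosen zigzag, not of the morphism $\gamma$. More structurally, a single object of $w\fiber\bJ$ can genuinely factor through several objects of $\bE|f$ with no canonical choice among them, so no functor of this kind exists; one would also face the separate problem that a fiber morphism $\rho\colon m \to m'$ gives no map $y_{i^*} \to y'_{j^*}$ between the chosen entries.

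The paper's proof sidesteps this multi-valuedness by turning it into a feature: after reducing to $x\fiber\bJ$ (via $w\fiber\bJ \simeq \bE(w,y)\times x\fiber\bJ$), it covers $x\fiber\bJ$ by the full subcategories $\bV[z] = \bE(x,z)\times z\fiber\bJ$ indexed by \emph{all} objects $z$ of $\bE|f$ through which a zigzag can be made to factor, shows each piece and each nonempty intersection $\bV[z_0]\cap\cdots\cap\bV[z_\ell] = \bE(x,z_0)\times\bE(z_0,z_1)\times\cdots\times z_\ell\fiber\bJ$ is contractible (the last factor by the inductive hypothesis, the rest because they contain atoms; loopfreeness of $\bE|f$ controls the orderings), and then invokes the nerve theorem: the nerve of the cover is homotopy-equivalent to $\Delta(\bE|f)$, which is contractible by mildness. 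If you want to salvage a Theorem A argument, you would essentially have to replace your single-valued $\bG$ by this cover, at which point you have reproduced the paper's proof; as written, your Step 2 cannot be completed.
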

\begin{proof}
It suffices to show that $|\Delta(x \fiber \bJ)|$ is contractible, since the fiber $w\fiber\bJ$ under any $w \in S(f)$ is homotopy-equivalent to $\bE(w,y) \times x \fiber\bJ$ by arguments similar to those from the proof of Proposition \ref{prop:baseunder}. Every object $(m,\gamma)$ in $x \fiber \bJ$ is represented by a $\Sigma$-zigzag
\[
\xymatrixrowsep{0.3in}
\xymatrixcolsep{0.3in}
\xymatrix{
x \ar@{->}[r]^{g_0} & y_0 & x_0 \ar@{->}[l]_{f_0} \ar@{->}[r]^{g_1} & y_1 & \cdots \ar@{->}[l]_{f_1} &  x_k \ar@{->}[l]_{f_{k}} \ar@{->}[r]^{g_{k+1}} & m,
}
\]
and no generality is sacrificed by assuming that $f_0$ (and hence, all subsequent $f_\bullet$'s) are different from $f$. Now, $y_0$ lies in $\bE|f$ and the morphism $\gamma$ in $\Loc_\Sigma\bE(x,m)$ decomposes into an element $g_0 \times (m,\lambda)$ in the product $\bE(x,y_0) \times y_0\fiber \bJ$, where $\lambda$ is the following morphism in $\Loc_\Sigma\bE(y_0,m)$ obtained from $\gamma$ by replacing $g_0$ with a forward-pointing identity:
\[
\xymatrixrowsep{0.3in}
\xymatrixcolsep{0.3in}
\xymatrix{
y_0 \ar@{->}[r]^{1} & y_0 & x_0 \ar@{->}[l]_{f_0} \ar@{->}[r]^{g_1} & y_1 & \cdots \ar@{->}[l]_{f_1} &  x_k \ar@{->}[l]_{f_{k}} \ar@{->}[r]^{g_{k+1}} & m.
}
\]
Thus, the category $x \fiber \bJ$ is covered by finitely many categories $\{\bV[z]\}$, one for each object $z$ in $\bE|f$, defined as images of products:
\[
\bV[z] = \bE(x,z) \times z \fiber \bJ,
\]
and the covering maps $\phi_z:\bV[z] \to x\fiber\bJ$ act via composition: 
\[
\phi_z(g \times (m,\lambda)) = (m,g\com\lambda).
\]
It is straightforward to confirm that the images of these covering maps are contractible subcategories of $x \fiber \bJ$. Note that every object in a non-empty intersection $V[z] \cap V[z']$ corresponds to a morphism $h:z \to z'$ in the following sense. We have an equality
\[
\phi_z(g \times (\lambda,m)) = \phi_{z'}(g' \times (\lambda',m'))
\]
if and only if $m = m'$ and there is some $h:z \to z'$ in $\bE|f$ making the following diagram commute in $\Loc_\Sigma\bE$:
\[
\xymatrixrowsep{0.25in}
\xymatrixcolsep{0.4in}
\xymatrix{
& z \ar@{->}[dd]_{h} \ar@{->}[dr]^{\lambda} & \\
x \ar@{->}[ur]^{g} \ar@{->}[dr]_{g'} & & m \\
& z' \ar@{->}[ur]_{\lambda'}
}
\]
Thus, every non-empty intersection $\bV[z_0] \cap \cdots \cap \bV[z_\ell]$ is contractible --- it is given by
\[
\bE(x,z_0) \times \bE(z_0,z_1) \times \cdots \times \bE(z_{\ell-1},z_\ell) \times z_\ell \fiber \bJ,
\] 
where the last factor is contractible by assumption (on fibers under objects of $\bE|f$) and the remaining factors are contractible since they contain atoms. Note that there are no other orderings of the $z_\bullet$'s which generate objects in the nonempty intersection above since $\bE|f$ is loopfree by mildness. This cover by $\bV[z]$'s of $x \fiber \bJ$ has a nerve whose vertex set is $(\bE|f)_0$ with order relation $z \geq z'$ whenever $\bE(z,z') \neq \varnothing$. By (arguments similar to the ones used in) the proof of Proposition \ref{prop:entpathhom}, this nerve lies in the homotopy class of $\Delta(\bE|f)$, which is contractible by Definition \ref{def:mild}.
\end{proof}

Turning at last to the proof of Theorem \ref{thm:floloc}, let $w$ be an arbitrary object of $\bE$ (and hence, of $\Loc_\Sigma\bE$). If $w$ is $\Sigma$-critical, then the fiber $w \fiber \bJ$ is contractible by Proposition \ref{prop:critfiber}. Otherwise, we know that there are no infinitely long chains of the form $f \flord f_0 \flord \cdots$ by mildness of $\Sigma$ and hence one may use an inductive argument as follows. If there are no $f' \in \Sigma$ different from $f$ satisfying $f \flord f'$, then $f$ is locally minimal and Proposition \ref{prop:baseunder} guarantees the contractibility of $w \fiber \bJ$. If we do have such $f'$s on the other hand, then assume (as an inductive hypothesis) that the fibers under all objects lying in their spans are contractible. But each $z \in (\bE|f)_0$ is either critical or lies (uniquely) in the span of some such $f'$. In this case, we resort to Proposition \ref{prop:floind} to extract the desired contractibility of $w\fiber\bJ$. Thus, $|\Delta(w \fiber \bJ)|$ is contractible for every object $w$ in $\Loc_\Sigma\bE$ and hence by the under-fiber version of Theorem \ref{thm:quillenA} our functor $\bJ$ induces a homotopy-equivalence of classifying spaces, as desired.

\section{Three calculations} \label{sec:calc}

We describe three computations of the discrete flow category in the most familiar and motivating context of acyclic partial matchings on regular CW complexes. The underlying CW complex in all three cases is the decomposition of the 2-sphere $\Sph$ from Figure \ref{fig:cwsphere} consisting of two 0-cells $w,y$, two 1-cells $x,z$ and two 2-cells $t,b$. 

\subsection{The flow category of an acyclic partial matching}\label{calc1}

For our first computation, consider the following acyclic partial matching $\mu$ on $\Sph$: set $\mu(y) = x$ and $\mu(z) = b$ and let 
\[
\Sigma = \{(x > y),(z > b)\}
\] be the associated Morse system on the entrance path category $\En{\Sph}$. Since $t$ and $w$ are the only $\Sigma$-critical cells, the discrete flow category $\Flo_\Sigma\En{\Sph}$ has precisely two objects. And since the poset $\En{\Sph}(w,\bullet)$ is empty for all cells $\bullet$ different from $w$, there are no non-trivial $\Sigma$-zigzags which start at $w$, and in particular $\Loc_\Sigma\En{\Sph}(w,t)$ is empty. A brief examination of Definition \ref{def:geonerve} now reveals that the classifying space $|\Delta\Flo_\Sigma\En{\Sph}|$ is (homeomorphic to) the suspension\footnote{The (two-point) suspension of a topological space $X$ is the quotient of $X \times [0,1]$ by identifications of the form $(x,0) \sim (x',0)$ and $(x,1) \sim (x',1)$ for each $x \in X$.} of $|\Delta\Flo_\Sigma\En{\Sph}(t,w)|$. 

In order to compute $\Flo_\Sigma\En{\Sph}(t,w)$, we examine all $\Sigma$-zigzags from $t$ to $w$. The easiest ones to describe are those with no backward pointing arrows which lie in the (un-localized) poset $\En{\Sph}(t,w)$:
\begin{align}
\label{zz1}
(t > z > w) \twomorback (t > w) \twomor (t > x > w). 
\end{align}
Since $\Sigma$ contains $(x > y)$, we are allowed to introduce zigzags of the form $t \to y \gets x \to w$. These correspond to the poset product $\En{\Sph}(t,y) \times \En{\Sph}(x,w)$:
\begin{align}
\label{zz2}
(t > x > y < x > w) \twomorback (t > y < x > w) \twomor (t > z > y < x > w).
\end{align}
Since $\Sigma$ also contains $(b > z)$, we have two new classes of zigzags. The first class is of the form $t \to z \gets b \to w$, and (similar to the previous poset product) it is given by
\begin{align}
\label{zz3}
(t > z < b > x > w) \twomorback (t > z < b > w) \twomor (t > z < b > z > w),
\end{align}
while the second class involves the longer zigzags $t \to z \gets b \to y \gets x \to w$. This is the product of three posets:
\begin{align}
\label{zz4}
(t > z) \times \big{[} (b > x > y) \twomorback (b > y) \twomor (b > z > y)\big{]} \times (x > w).
\end{align}

In order to assemble these four pieces together, we simply make the identifications suggested by the vertical reductions from Remark \ref{rmk:reductions} --- so $x > y < x$ is just $x$ while $z < b > z$ reduces to $z$, and so forth. For instance, the left side of (\ref{zz1}) is identified with the right side of (\ref{zz3}), while the right side of (\ref{zz1}) coincides with the left side of (\ref{zz2}). Making all such identifications leaves the following poset:
\[
\xymatrixrowsep{0.13in}
\xymatrixcolsep{0.13in}
\xymatrix{
(t > z > w)  & (t > w) \ar@{=>}[l] \ar@{=>}[r] & (t > x > w) \\
(t > z < b > w)\ar@{=>}[d] \ar@{=>}[u] &   & (t > y < x > w) \ar@{=>}[u] \ar@{=>}[d]  \\
 (t > z < b > x > w)   & (t > z < b > y < x > w) \ar@{=>}[l] \ar@{=>}[r] &(t > z > y < x > w) 
}
\]
The classifying space of the poset above is clearly homeomorphic to the circle, and therefore its suspension recovers $\Sph$ (up to homeomorphism and hence homotopy type) as desired. Note that we may coarsen $|\Delta\Flo_\Sigma\En{\Sph}|$ into a (non-regular) CW complex consisting of a 0-cell $w$ and a 2-cell $t$, where the entire boundary of $t$, which has the homotopy type of a circle as shown above, is glued onto $w$. 

\subsection{The necessity of poset-enrichment}\label{calc2}

The reader might wonder why we resort to the relatively strenuous process of localizing p-categories. One could, for instance, try to construct the discrete flow category by simply localizing about the paired cells in the face poset. Our second calculation reveals that this approach fails even when dealing with the Morse system $\Sigma = \{(x > y), (b > z)\}$ as before, but now on $\Fc{\Sph}$ rather than $\En{\Sph}$. 

The calculation proceeds in a similar manner to the preceding one, but we must convert some order relations to equalities when constructing $\Flo_\Sigma\Fc{\Sph}(t,w)$.  For instance, instead of (\ref{zz1}) we have
\[
(t > z > w) = (t > w) = (t > x > w),
\]
and so on. This process should not be too mysterious: we have simply applied the projection functor $\bP:\En{\Sph} \to \Fc{\Sph}$ from the proof of Proposition \ref{prop:entpathhom} to (\ref{zz1})-(\ref{zz4}), so that only the strictly alternating paths remain. In particular, all entrance sub-paths of the form $(p > q > r)$ are reduced to the extremal face relation $(p > r)$. The poset $\Flo_\Sigma\Fc{\Sph}(t,w)$ therefore equals
\[
\xymatrixrowsep{0.1in}
\xymatrixcolsep{0.1in}
\xymatrix{
  & (t > w) &  \\
(t > z < b > w)\ar@{=>}[ur] &   & (t > y < x > w) \ar@{=>}[ul]   \\
  & (t > z < b > y < x > w) \ar@{=>}[ul] \ar@{=>}[ur] \ar@{=>}[uu] &
}
\]
Since this poset's classifying space is contractible, so is its suspension $|\Delta\Flo_\Sigma\Fc{\Sph}|$. Thus, the flow category does not recover the homotopy type of $\Sph$ in this case. One reason for this failure is that the {\bf switching} and {\bf lifting} axioms of Definition \ref{def:morsys} do not hold --- we have:
\[
(b > z > y) = (b > y) = (b > x > y),
\]
but $z \not> x$. In fact, the localization functor $\Fc{\X} \to \Loc_\Sigma\Fc{\X}$ already fails to induce homotopy-equivalence. 

\subsection{The flow category of a generalized acyclic partial matching}\label{calc3}

Recent work on equivariant discrete Morse theory \cite{freij} introduced generalized Morse matchings, which relax the {\bf incidence} requirement of Definition \ref{def:partmatch} --- given a pairing $\mu(a) = b$ of cells, one only requires that $a$ be a face of $b$ with no restrictions on dimension. In this more general context, a critical cell $z$ is one which does not satisfy $a < z < \mu(a)$ for any pair $\mu(a) > a$. The {\em cluster lemma} \cite[Lem 4.1]{hershopt} or \cite[Lem 2]{clusterlemma} implies that given a generalized acyclic matching on a regular CW complex, there exists a traditional acyclic matching (in the sense of Definition \ref{def:partmatch}) with the same set of critical cells\footnote{In fact, one can expect several traditional acyclic matchings to yield the same critical cells as a fixed generalized matching, and in particular there is no canonical candidate.}. In our third and final calculation, we employ a generalized acyclic matching on $\Sph$ and construct the discrete flow category.

Consider the Morse system $\Gamma = \{(b > y)\}$ on $\En{\X}$. Since both $x$ and $z$ lie in the span of $(b > y)$, only $t$ and $w$ are critical as before. Thus, one needs to compute $\Flo_\Gamma\En{\Sph}(t,w)$ in order to extract the homotopy type of $|\Delta\Flo_\Gamma\En{\Sph}|$. There are two types of zigzags to consider. First we have the trivial ones from (\ref{zz1}),  and then we have the zigzags of type $t \to y \gets b \to w$. This second type is given by the poset product
\[
\big{[} (t > x > y) \twomorback (t > y) \twomor (t > z > y) \big{]} \times \big{[} (b > x > w) \twomorback (b > w) \twomor (b > z > w) \big{]},
\]
which is slightly more formidable than the ones hitherto encountered:
\[
\xymatrixrowsep{0.2in}
\xymatrixcolsep{0.15in}
\xymatrix{
(t > x > y < b > x > w)  & (t > y < b > x > w) \ar@{=>}[l] \ar@{=>}[r] & (t > z > y < b > x > w) \\
(t >x > y < b > w)\ar@{=>}[d] \ar@{=>}[u] & (t > y < b > w) \ar@{=>}[l]  \ar@{=>}[r] \ar@{=>}[u] \ar@{=>}[d] \ar@{=>}[lu] \ar@{=>}[ld] \ar@{=>}[ru] \ar@{=>}[rd]  & (t > z > y < b > w) \ar@{=>}[u] \ar@{=>}[d]  \\
 (t > x > y < b > z > w)   & (t > y < b > z > w) \ar@{=>}[l] \ar@{=>}[r] &(t > z > y < b > z > w) 
}
\]

By the vertical reduction of Remark \ref{rmk:reductions}, we may identify $x > y < b > x$ with $x$ and similarly $z > y < b > z$ with $z$. So the top left corner of the poset above corresponds to the right side of (\ref{zz1}) whereas the bottom right corner identifies with the left side of (\ref{zz1}). Thus, the poset $\Flo_\Gamma\En{\Sph}(t,w)$ may be geometrically realized as a filled-in square along with an additional path connecting two vertices across a diagonal:
\begin{figure}[h!]
\includegraphics[scale=0.3]{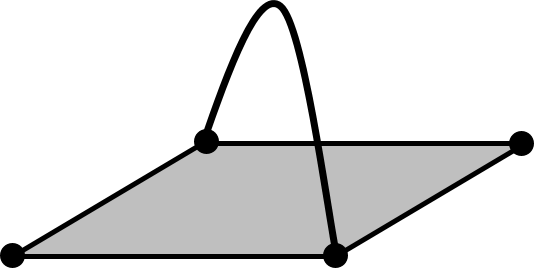}
\label{fig:genflo}
\end{figure}

This poset clearly has the homotopy type of a circle, and so its suspension $|\Delta\Flo_\Gamma\En{\Sph}|$ recovers the homotopy type of $\Sph$.

\section{The general main result and an application}\label{sec:newdmt}

Combining Theorems \ref{thm:cellularloc} and \ref{thm:floloc} yields the following general statement about (mild) Morse systems on cellular categories.
\begin{theorem}
\label{thm:gen}
Let $\bE$ be a cellular category equipped with a Morse system $\Sigma$, and let $\Flo_{\Sigma}{\bE}$ be the full subcategory of the localization $\Loc_\Sigma\bE$ generated by the $\Sigma$-critical objects. Then, the localization p-functor 
\[
\bL_\Sigma:\bE \to \Loc_\Sigma\bE
\] induces a homotopy-equivalence. If $\Sigma$ happens to be mild, then the inclusion functor 
\[
\bJ:\Flo_\Sigma\bE \to \Loc_\Sigma\bE
\] also induces a homotopy-equivalence, in which case the classifying spaces $|\Delta\bE|$ and $|\Delta\Flo_\Sigma\bE|$ are homotopy-equivalent.
\end{theorem}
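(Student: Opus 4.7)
The plan is to obtain Theorem \ref{thm:gen} as an immediate consolidation of the two principal technical results already proved in Sections \ref{sec:lochom} and \ref{sec:flocat}. No new machinery is needed and no genuine obstacle arises; the statement is designed precisely so that its proof is essentially a one-line appeal to the preceding theorems.

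First I would apply Theorem \ref{thm:cellularloc} to the pair $(\bE,\Sigma)$, which is exactly the setup of the present theorem. That result asserts that the strict localization p-functor $\bL_\Sigma:\bE \to \Loc_\Sigma\bE$ induces a homotopy-equivalence on classifying spaces, immediately yielding the first claim. Assuming in addition that $\Sigma$ is mild in the sense of Definition \ref{def:mild}, I would then invoke Theorem \ref{thm:floloc} to conclude that the inclusion $\bJ:\Flo_\Sigma\bE \to \Loc_\Sigma\bE$ of the full subcategory on $\Sigma$-critical objects also induces a homotopy-equivalence on classifying spaces. Composing the first equivalence with a homotopy-inverse of the second through the common intermediate space $|\Delta\Loc_\Sigma\bE|$ then produces the asserted homotopy-equivalence $|\Delta\bE| \simeq |\Delta\Flo_\Sigma\bE|$.

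Because the hard work has been shouldered entirely by Theorems \ref{thm:cellularloc} and \ref{thm:floloc} --- the former via the essential-chain functor $\bN:\bL_\Sigma\fiber z \to \Sigma_z$ and the inductive contractibility of its fibers $\bN\fiber\sigma$, the latter via the mild-Morse induction through Propositions \ref{prop:baseunder} and \ref{prop:floind} --- there is no technical obstacle left to overcome. If one wished to flag any subtlety at all, it would be purely bookkeeping: confirming that the full subcategory $\Flo_\Sigma\bE$ appearing in the statement agrees with the one to which Theorem \ref{thm:floloc} refers (Definition \ref{def:flo}), and that the composition of homotopy-equivalences is oriented correctly through $|\Delta\Loc_\Sigma\bE|$. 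Both checks are immediate from the definitions, so the proof will occupy essentially a single sentence.
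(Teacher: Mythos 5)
Your proposal is correct and matches the paper's own treatment exactly: Theorem \ref{thm:gen} is stated there as an immediate combination of Theorem \ref{thm:cellularloc} (for $\bL_\Sigma$) and Theorem \ref{thm:floloc} (for $\bJ$ under mildness), with the final equivalence $|\Delta\bE|\simeq|\Delta\Flo_\Sigma\bE|$ obtained by composing through $|\Delta\Loc_\Sigma\bE|$. There is nothing to add.
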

Since every acyclic partial matching on a finite regular CW complex $\X$ induces a mild Morse system on its entrance path category $\En{\X}$ by Propositions \ref{prop:entcell} and \ref{prop:finmild}, our main Theorem \ref{thm:main} follows as an immediate corollary of the preceding result. Two independent avenues for generalizing discrete Morse theory are apparent from Theorem \ref{thm:gen}: we could impose Morse systems on cellular categories that are not entrance path categories, or we could examine CW complexes with Morse systems that are not induced by acyclic partial matchings. 

The calculation of the flow category from Section \ref{calc3} has already addressed the second type of generalization somewhat, so we will focus here on the first type. In particular, we use Theorem \ref{thm:gen} along with the universal property of localization to {\em conveniently inherit a Morse theory} for certain categories of functors sourced at $\En{\X}$. 

\subsection{Application: compressing cellular cosheaves}

Let $\X$ be a finite regular CW complex, and let $\Rmod$ be the category of modules over a fixed commutative ring $R$. Recall that the face poset $\Fc{\X}$ has as its objects the cells of $\X$ with a unique morphism $x > y$ whenever $y$ is a face of $x$. A {\bf cellular cosheaf} over $\X$ taking values in $\Rmod$ (see \cite[Sec 1.1]{shepard} or \cite[Sec 5]{curry}) is a functor $\bF:\Fc{\X} \to \Rmod$. The $\bF$-values on cells are called {\em stalks}, the linear maps assigned to face relations are called {\em extension maps}. One can compute the homology of $\X$ with coefficients in $\bF$ --- written $H_\bullet(\X;\bF)$ --- through a chain complex $(C_\bullet,d_\bullet)$ of $R$-modules:
\[
\cdots \stackrel{d_3}{\longrightarrow} C_2 \stackrel{d_2}{\longrightarrow} C_1 \stackrel{d_1}{\longrightarrow} C_0 \stackrel{d_0}{\longrightarrow} 0,  
\]
where $C_n =  \bigoplus_{\dim x = n} \bF(x)$ and the component of $d_n$ from $x$ to $y$ is the restriction map $\bF(x > y)$ multiplied with a suitable local orientation taking values in $\{-1,0,1\}$. 

Using the projection $\bP:\En{\X} \to \Fc{\X}$ from the proof of Proposition \ref{prop:entpathhom}, every cellular cosheaf $\bF$ on $\X$ induces a functor $\bP\com \bF:\En{\X} \to \Rmod$. Recall, by Proposition \ref{prop:finmild} that every acyclic partial matching $\mu$ on $\X$ induces a mild Morse system $\Sigma$ on $\En{\X}$. If all extension maps $\bF(\mu(\bullet) > \bullet)$ assigned to matched cells are isomorphisms of $R$-modules, then $\bP \com \bF$ admits an extension $\bF'$ across $\bL_\Sigma$ by the universal property of localization mentioned in Section \ref{subsec:loc}. Theorem \ref{thm:gen} now guarantees that the following diagram homotopy-commutes and that the vertical arrows are homotopy-invertible:
\[
\xymatrixrowsep{0.4in}
\xymatrixcolsep{0.4in}
\xymatrix{
\En{\X} \ar@{->}[dr]^{\bP \circ \bF} \ar@{->}[d]_{\bL_\Sigma}  & \\
\Loc_\Sigma\En{\X} \ar@{-->}[r]_{\bF'} & \Rmod \\
\Flo_\Sigma\En{\X} \ar@{->}[u]^{\bJ} \ar@{..>}[ur] &
}
\]
Here the dotted arrow is the composite $\bJ \com \bF'$ --- it yields a cosheaf of $R$-modules on the flow category, which in turn produces the Morse chain complex mentioned at the end of Section \ref{sec:DMT} and hence recovers the homology $H_\bullet(\X;\bF)$ (see Forman's work \cite[Sec 8]{forman98} for the simplest case, which involves the constant cosheaf). It is worth noting that the {\em multiplicity} of a gradient path in the sense of \cite[Def 8.6]{forman98} is precisely the action of $\bJ \com \bF'$ on any $\Sigma$-zigzag representing that path.

While there exist algebraic \cite{skoldberg} and computational \cite{curry:ghrist:nanda} techniques to extract the Morse chain complex for the purposes of computing $H_\bullet(\X;\bF)$, using the flow category allows us to simultaneously deform the underlying base space (while preserving its homotopy type) as we modify the overlaid algebra (while preserving homology). Thus, Theorem \ref{thm:gen} provides a natural mechanism to safely compress both the base space and the cosheaf data.

\section*{Acknowledgements}

This work benefitted greatly from the important ideas which Dai Tamaki and Kohei Tanaka shared with me while we worked on \cite{ntt}, and from Rob Ghrist's impeccable guidance and support. I first encountered Definition \ref{def:ent2cat} in a lecture series by Bob MacPherson and Amit Patel, which was organized by Justin Curry at the Institute for Advanced Study. I am also grateful to the anonymous referee for several insightful suggestions and corrections. This work was supported by the Alan Turing Institute under the EPSRC grant number EP/N510129/1. 

\bibliographystyle{abbrv}
\bibliography{morse}

\end{document}